\def\url@leostyle{%
  \@ifundefined{selectfont}{\def\UrlFont{\sf}}{\def\UrlFont{\small\ttfamily}}}
\let\oldlabel=\label
\def\prellabel{\marginparsep=1em
    \def\label##1{\oldlabel{##1}\ifmmode\else\ifinner\else
         \marginpar{{\footnotesize\ \\ \tt
                    ##1}}\fi\fi}}
\def\codim{\operatorname{codim}}
\def\rank{\operatorname{rank}}
\def\pc{\operatorname{pc}}
\def\Ker{{\operatorname{Ker}}}
\def\cone{{\operatorname{cone}}}
\def\o{\operatorname{o}}
\def\int{{\operatorname{int}}}
\def\conv{\operatorname{conv}}
\def\aff{\operatorname{aff}}
\def\lin{\operatorname{lin}}
\def\vertex{\operatorname{vert}}
\def\rank{\operatorname{rank}}
\def\codim{\operatorname{codim}}
\def\Im{\operatorname{Im}}
\def\Hom{\operatorname{Hom}}
\def\Pol{\operatorname{Pol}}
\def\Cones{{\operatorname{Cones}}}
\def\join{\operatorname{join}}
\def\surj{\operatorname{surj}}
\def\inj{\operatorname{inj}}
\def\RR{{\mathbb R}}
\def\CC{{\mathbb C}}
\def\ZZ{{\mathbb Z}}
\def\NN{{\mathbb N}}
\def\FF{{\mathbb F}}
\def\xx{{\mathbf x}}
\def\Coker{{\operatorname{Coker}}}
\def\Pol{\operatorname{Pol}}
\let\epsilon=\varepsilon
\let\phi=\varphi
\let\theta=\vartheta
\newtheorem{lemma}{Lemma}[section]
\newtheorem{corollary}[lemma]{Corollary}
\newtheorem{theorem}[lemma]{Theorem}
\newtheorem{proposition}[lemma]{Proposition}
\newtheorem{conjecture}[lemma]{Conjecture}
\theoremstyle{definition}
\newtheorem{definition}[lemma]{Definition}
\newtheorem{remark}[lemma]{Remark}
\newtheorem{example}[lemma]{Example}
\newtheorem{observation}[lemma]{Observation}
\begin{document}

\title[Hom-polytopes]{Hom-polytopes}

\author[T. Bogart]{Tristram Bogart$^\star$}
\address{Departamento de Matem\'aticas \\
Universidad de los Andes \\
Cra 1 No. 18A-10, Edificio H \\
Bogot\'a, 111711 \\
Colombia}
\email{tc.bogart22@uniandes.edu.co}

\author[M. Contois]{Mark Contois}
\address{Research in Motion \\
295 Phillip Street \\
Waterloo, Ontario \\
Canada N2L 3W8}
\email{mcontois@rim.com}

\author[J. Gubeladze]{Joseph Gubeladze$^\dag$}
\address{Department of Mathematics\\
         San Francisco State University\\
         1600 Holloway Ave.\\
         San Francisco, CA 94132, USA}
\email{soso@sfsu.edu}

\thanks{$^\star$Supported by NSF grant DMS-0441170}
\thanks{$^\dag$Supported by NSF grant DMS-1000641}

\subjclass[2010]{Primary 52B11, 52B12; Secondary 5E99, 14-04, 18D20, 52B05}
\keywords{polytope, affine map, hom-polytope, polygon, vertex map,
  category of polytopes}

\maketitle

\begin{abstract}
We study the polytopes of affine maps between two polytopes -- the
\emph{hom-polytopes}. The hom-polytope functor has a left adjoint --
\emph{tensor product polytopes}. The analogy with the category of
vector spaces is limited, as we illustrate by a series of explicit examples
exhibiting various extremal properties. The main challenge for
hom-polytopes is to determine their vertices.  A polytopal analogue of
the rank-nullity theorem amounts to understanding how the vertex maps behave relative to their surjective and
injective factors. This leads to interesting classes of surjective maps. In the last two
sections we focus on two opposite extremal cases -- when the source
and target polytopes are both polygons and are either generic or regular.
\end{abstract}

\section{introduction}\label{intro}

\subsection{Motivation} The convex polytopes and their affine maps
form a natural habitat for a major part of the contemporary combinatorics (combinatorial commutative
algebra, toric algebraic geometry, tropical geometry, Ehrhart theory, linear and integer programming).
The category of polytopes is also an object of
study in its own right. The importance of such an approach is highlighted on the
last pages of \cite{ZiPOL}. By analogy with algebraic structures and
topological spaces, one could ask whether polytopes are also amenable
to a unifying analysis, which would provide a general context for
various important constructions and results. In the case of algebra
and topology such a unifying machine is homology theory. Since convex
polytopes are just one step away from the classical linear world, the
question can be put in very concrete terms: what are the polytopal
versions of $\Hom$, $\otimes$, $\Ker$, $\Coker$, $\text{Ext}$? How do
they fit into the current trends in polytope theory? Is there a more
universal (algebraic?) mechanism for addressing concrete challenges in
polytopes than, say, triangulations, analogous to the triangulations vs. homology dichotomy for topological spaces?

A notable example that suggests that these are natural questions is
the Billera-Sturmfels concept of fiber polytopes
\cite{BiStuFIBER}. This construction is expected to be the right kernel object in the category of polytopes. But the analogy with $\ker$ is yet to be fully explained and, more importantly, pushed further to include still conjectural co-kernel
objects in the same category.

In this paper we undertake the first step in the direction of categorial analysis of convex polytopes: we study the sets of affine maps
between two given polytopes, the \emph{hom-polytopes}. Curiously enough, apart from the motivation above, hom-polytopes show some relevance in quantum physics \cite{Physics}.

\subsection{Results.} That the hom-polytopes are in fact polytopes is a folklore fact; see Section \ref{hom}. The full blown analogy with vector spaces is a symmetric closed monoidal structure on the category of polytopes over which the category itself is enriched (Corollary \ref{enriched}). In particular, there is a natural \emph{tensor product} of polytopes, satisfying the usual conjunction with hom-polytopes. But, unlike the linear situation, the tensor product of polytopes exhibits interesting extremal properties (Example \ref{neighboring}). The material up to Corollary \ref{enriched} is modeled on vector spaces and the arguments are mostly skipped. The summary is given for the sake of
completeness. After posting the preprint on arXiv, we learned about Valby's undergraduate thesis \cite{Valby} which gives a very detailed treatment of the same material.

The description of the facets of hom-polytopes is very simple: a facet
consists of the maps mapping a chosen vertex of the source into a
chosen facet of the target. It is, therefore, the determination of \emph{vertices} which amounts to understanding the geometric consequences of our categorial-polytopal endeavor. Very rarely can one hope for a full description of the vertices of hom-polytopes in terms of the source and target polytopes. In Section \ref{vertices} we introduce several tractable classes of surjective vertex maps in arbitrary dimension (\emph{deflations,} \emph{face collapses}). Their analysis (Theorems \ref{vertexcomposite} and \ref{fGamma}), in particular, yield complete description of the rank 1 vertex maps, i.~e., the vertex maps whose images in the target polytope are segments (Corollary \ref{rank1}).

In Section \ref{examples} we present several examples of interesting vertex maps, making clear: (i) the limitation of the analogy between the categories of polytopes and vector spaces (\emph{vertex factorization of non-vertex maps, gaps in ranks}), and (ii) the distinction between the classes of surjective vertex maps, introduced in Section \ref{vertices}.


In Section \ref{generic} we are able to show that in the hom-polytope
between two generic polygons, all but a few vertices are simple
(Theorem \ref{thm:generic}), and we completely describe the
exceptions. (To this end we must first introduce an
appropriate algebraic parametrization of the set of pairs of polygons.) The
result is shown in two steps: (i) introducing combinatorial structures that reduce the problem to the claim that certain explicit multivariate polynomial determinants of 31 different types are non-degenerate, and then (ii) verifying the claim by effective methods, with use of the computer algebra system \texttt{Macaulay 2}.

Regular polygons give rise to interesting arithmetic functions -- the number of vertices of the hom-polytope $\Hom(P_n,P_m)$ between regular $n$- and $m$-gons. In Section \ref{regular}, by explicit polygonal constructions, we obtain the explicit full lists of the vertices of $\Hom(P_m,P_n)$ when $\min(m,n)\le4$. At the end of Section \ref{regular} we present computational results, based on \textsf{Polymake}, for the number of vertices of $\Hom(P_m,P_n)$ with $m,n\le8$.

\subsection{Affine geometry} In this and next two subsections we fix terminology and collect several general facts.

For the unexplained background material on convex polyhedral geometry the reader is referred to \cite[Ch.1]{KRIPO} and \cite[Ch.1,2]{ZiPOL}.

We will work exclusively in finite dimensional real vector spaces. An
\emph{affine subspace} of a vector space is the sum of a linear
subspace and a vector. For a vector space $E$ and a subset $X\subset
E$, the \emph{affine hull} $\aff(X)$ is the minimal affine subspace of
$E$ containing $X$, and the \emph{linear hull} $\lin(X)$ is the
parallel translate of $\aff(X)$ containing $0$:
\begin{align*}
\aff(X)=\{\lambda_1x_1+\cdots+\lambda_nx_n\ |\ &n\in\NN,\ x_1,\ldots,x_n\in X,\\
&\lambda_1,\ldots,\lambda_n\in\RR,\ \lambda_1+\cdots+\lambda_n=1\};\\
\\
\lin(X)=\{\mu_1(x_1-x_0)+\cdots+\mu_n(x_n-x_0)\ |\ &n\in\NN,\ x_0,x_1,\ldots,x_n\in X,\\
&\mu_1,\ldots,\mu_n\in\RR\}.
\end{align*}

An \emph{affine map} $f:E\to E'$ between two vector spaces is the
composition of a linear map $E\to E'$ with a parallel translation $E'\to E'$. A map $f:E\to E'$ is affine if and only if it respects barycentric coordinates:
$$
f\left(\sum_{i=1}^n\lambda_ix_i\right)=\sum_{i=1}^n\lambda_if(x_i)
$$
for all $n\in\NN$, $x_1,\ldots,x_n\in E$, and $\lambda_1,\ldots,\lambda_n\in\RR$ with $\sum_{i=1}^n\lambda_i=1$. More generally, let $A\subset E$ and $A'\subset E'$ be affine subspaces. A map $f:A\to A'$ is \emph{affine} if it is a restriction of an affine map between the ambient vector spaces.  The set of affine maps between $E$ and $E'$, denoted by $\aff(E,E')$, is a vector space in a natural way. The set of affine maps $A\to A'$, denoted by $\aff(A,A')$, becomes an affine subspace of $\aff(E,E')$ upon choosing an affine projection $\pi:E\to A$ with $\pi^2=\pi$ and applying the embedding:
$$
\aff(A,A')\to\aff(E,E'),\quad f\mapsto\iota\circ f\circ\pi,
$$
where $\iota:A'\to E'$ is the identity embedding. The resulting affine
structures on $\aff(A,A')$ for various $\pi$ are all isomorphic. As a
result, there is a well-defined notion of convexity in
$\aff(A,A')$. This space satisfies $\dim\aff(A,A')=\dim A\dim A'+\dim A'$.

For a subset of an affine space $X\subset A$, its convex hull will be denoted by $\conv(X)$. For $X\subset E$, $E$ a vector space, the \emph{conical hull} of $X$ will be denoted by $\cone(X)$:
$$
\cone(X)=\left\{\RR_+x_1+\cdots+\RR_+x_n\ |\ n\in\NN,\ x_1,\ldots,x_n\in X\right\},
$$
where $\RR_+$ refers to the set of nonnegative reals.

For a convex subset $X\subset A$, the \emph{relative interior} of $X$ in $\aff(X)\subset A$ will be denoted by $\int(X)$. The boundary of $X$ is defined by $\partial X=X\setminus\int(X)$.

For two convex sets $X$ and $Y$, $\hom(X,Y)$ denotes the set of affine
maps $X\to Y$ and $\aff(X,Y)$ denotes the set of affine maps
$\aff(X)\to\aff(Y)$. The natural embedding
$\hom(X,Y)\hookrightarrow\aff(X,Y)$ makes $\hom(X,Y)$ into a convex
subset of $\aff(X,Y)$ that satisfies:
\begin{enumerate}
\item
$\aff(X,Y)=\aff(\hom(X,Y))=\hom(X,\aff(Y))$,
\item
$\int(\hom(X,Y))=\hom(X,\int(Y))$.
\end{enumerate}

\subsection{Polytopes and cones}\label{polytopesandcones}
We only consider convex polytopes, i.~e., our polytopes are the
compact intersections of finitely many affine half-spaces, or
equivalently the convex hulls of finitely many points.

For a polytope $P$, the sets of its facets and vertices will be denoted by $\FF(P)$ and $\vertex(P)$, respectively.

For two polytopes in their ambient vector spaces $P\subset E$ and $Q\subset E'$, their \emph{join} is defined by
\begin{align*}
\join(P,Q)=\conv\{(x,0,0),\ (0,1,y)\ |\ x\in P,\ y\in Q\}\subset E\oplus\RR\oplus E'.
\end{align*}
Let $\iota_P$ and $\iota_Q$ be the obvious embeddings of $P$ and $Q$ into $\join(P,Q)$.
Every point $z\in\join(P,Q)$ has a unique representation $z=\lambda\iota_P(x)+(1-\lambda)\iota_Q(y)$, $\lambda\in[0,1]$. For two affine maps $f:P\to R$ and $g:Q\to R$ we have the affine map:
\begin{align*}
\join(P,Q)\to R,\quad\lambda\iota_P(x)+(1-\lambda)\iota_Q(y)\mapsto\lambda f(x)+&(1-\lambda) g(y),\\
&\lambda\in[0,1],\ x\in P,\ y\in Q.
\end{align*}
It is uniquely determined by its restriction to $\Im\iota_P$ and $\Im\iota_Q$.

By \emph{cones} we will refer to pointed, convex, polyhedral cones:
those that are obtained as the intersection of finitely many half-spaces and
contain no lines.

\medskip\noindent\emph{Further notation.} The \emph{bipyramid} over a polytope $P$ in a vector space $E$ is the polytope
$$
\Diamond(P)=\conv((P,0),c_P+(0,1),c_P-(0,1))\subset E\oplus\RR,
$$
where $c_p\in P$ is the barycenter.

The $n$-dimensional standard simplex, cube, and cross-polytope are defined by
\begin{align*}
&\Delta_n=\conv(e_1,\ldots,e_n,e_{n+1}),\\
&\Box_n=\conv\bigg(\sum_{i=1}^n\delta_ie_i,\, \delta_i=\pm1\bigg),\\
&\Diamond_n=\conv(\pm e_1,\ldots,\pm e_n),\\
\end{align*}
where $e_i$ denotes the $i$th standard basis vector.

For a natural number $n\ge3$, the standard regular $n$-gon is
\begin{align*}
P_n=&\conv(1,\zeta_n,\zeta_n^2,\ldots,\zeta_n^{n-1})\subset\CC=\RR\oplus\RR,\\
&\zeta_n=\cos(2\pi/n)+\sin(2\pi/n)i.
\end{align*}

\subsection{Categories} Our category theory terminology follows the classical source
\cite{macCAT}: comma categories, limits and co-limits of diagrams,
conjugated functors, and symmetric monoidal categories. For the
concept of enriched categories, we refer the
reader to \cite{Enriched}. Even if this is the first time the reader
encounters this terminology, the exposition is sufficiently
self-explanatory to warrant skipping inclusion of the definitions. However, the interested reader can consult \cite{Valby}.

Let $\Pol$ denote the category of polytopes and affine maps and $\Cones$ denote the category of cones and linear maps. In both categories, for objects $A$ and $B$ we will use the notation $\Hom(A,B)$ for the corresponding hom-sets.

In $\Pol$ we have the following universal equalities:
\begin{align*}
&\join(P,Q)=P\coprod Q=\lim_{\to}(P,Q),\\
&P\times Q=P\prod Q=\lim_{\leftarrow}\big(P,Q\big).\\
\end{align*}

\section{Hom-polytopes}\label{hom}
The following proposition is well-known \cite[\S9.4]{ZiPOL} and is the basis for the
special module in \textsf{Polymake} \cite{POLYMAKE,GaJoPOLY} for computing
hom-polytopes.\footnote{Called \emph{mapping polytopes} in \textsf{Polymake}.} We used this software for the experiments
presented in Section~\ref{regular} below. The details are written up to ease the references in the following sections.

\begin{proposition}\label{homproperties} Let $P,Q,R$ be polytopes.
\begin{enumerate}
\item $\Hom(P,Q)$ is a polytope in $\aff(P,Q)$ with
\begin{align*}
\FF(\Hom(P,Q))=\big\{H(v,F)\ |\ &v\in\vertex(P),\ F\in\FF(Q),\\
&H(v,F)=\{f\in\Hom(P,Q)\ |\ f(v)\in F\}\big\},
\end{align*}
\item $\dim\Hom(P,Q)=\dim P\dim Q+\dim Q$.
\item $\Hom(P,Q\times R)\cong\Hom(P,Q)\times\Hom(P,R)$,
\item $\Hom(P,Q\cap R)\cong\Hom(P,Q)\cap\Hom(P,R)$
\item $\Hom(\join(P,Q),R)\cong\Hom(P,R)\times\Hom(Q,R)$.
\end{enumerate}
\end{proposition}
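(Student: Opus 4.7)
The plan is to handle the five items in sequence, doing most of the work for (1) and (2), and deriving (3)--(5) from essentially tautological universal-property arguments.

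For (1), the starting observation is that an affine map $f:\aff(P)\to\aff(Q)$ satisfies $f(P)\subseteq Q$ if and only if $f(v)\in Q$ for every $v\in\vertex(P)$, since $P=\conv(\vertex(P))$ and $Q$ is convex. Writing $Q=\bigcap_{F\in\FF(Q)}H_F^+$ as an intersection of the closed half-spaces supporting its facets, we get
$$
\Hom(P,Q)=\bigcap_{v\in\vertex(P),\;F\in\FF(Q)}\{f\in\aff(P,Q)\ |\ f(v)\in H_F^+\}.
$$
Each condition is the pullback of a closed half-space under the affine evaluation map $\mathrm{ev}_v:\aff(P,Q)\to\aff(Q)$, so the intersection is a polyhedron; boundedness (hence polytopality) follows from $Q$ being bounded. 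This also exhibits each $H(v,F)$ as a face. The remaining obstacle -- the genuine content of (1) -- is to verify that every $H(v,F)$ is a \emph{facet}, i.e.~a codimension-$1$ face and not redundant. For this I would construct, for fixed $(v,F)$, an affine map $f_0$ sending $v$ to a point in $\int(F)$ and every other vertex of $P$ into $\int(Q)$ (available because $\int(Q)\neq\emptyset$ and one has enough freedom in choosing the affine map once the images of the vertices are fixed in an affinely independent way, or by perturbing any map in $\Hom(P,\int(Q))$). Such an $f_0$ lies in $H(v,F)$ but in no other $H(v',F')$, giving a relative interior point of the face and forcing it to be a facet.

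For (2), the space $\aff(P,Q)$ has dimension $\dim P\dim Q+\dim Q$ by the formula recalled in the affine geometry subsection. By the identity $\int(\Hom(P,Q))=\Hom(P,\int(Q))$ noted in the preamble, and the nonemptiness of $\Hom(P,\int(Q))$ (any constant map to a point of $\int(Q)$ works), $\Hom(P,Q)$ has nonempty interior in $\aff(P,Q)$, hence is full-dimensional there.

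For (3), the universal property of the categorical product $Q\times R$ gives a natural bijection between affine maps $P\to Q\times R$ and pairs $(g,h)$ with $g:P\to Q$ and $h:P\to R$, via $f\mapsto(\pi_Q\circ f,\pi_R\circ f)$. This bijection is manifestly affine in both directions, hence an affine isomorphism of polytopes. For (4), embedding all three hom-polytopes into the common affine space $\aff(P,E)$ (where $E$ is an ambient vector space containing $\aff(Q)\cup\aff(R)$), the equivalence $f(P)\subseteq Q\cap R\iff f(P)\subseteq Q\text{ and }f(P)\subseteq R$ gives the identification as an intersection of subpolytopes. For (5), the paragraph in the preamble on joins already exhibits the bijection: every affine map $\join(P,Q)\to R$ is uniquely determined by, and constructible from, its restrictions to $\Im\iota_P$ and $\Im\iota_Q$; this correspondence $f\mapsto(f\circ\iota_P,f\circ\iota_Q)$ is affine in both directions. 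The main obstacle throughout is really only the irredundancy verification in (1); the other parts are universal-property bookkeeping.
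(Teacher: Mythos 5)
Your overall route coincides with the paper's: reduce membership in $\Hom(P,Q)$ to the conditions $f(v)\in H_F^+$ indexed by pairs $(v,F)$, prove irredundancy by exhibiting a map lying on exactly one of the corresponding hyperplanes, get (2) from $\int(\Hom(P,Q))=\Hom(P,\int(Q))$ and full-dimensionality in $\aff(P,Q)$, and get (3)--(5) from the universal properties of product, intersection, and join (the paper phrases the last three via $\Hom(a,\lim_{\leftarrow}\mathcal D)\cong\lim_{\leftarrow}\Hom(a,\mathcal D)$ and $\Hom(\lim_{\to}\mathcal D,a)\cong\lim_{\leftarrow}\Hom(\mathcal D,a)$, which is the same bookkeeping you describe).

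The one place where your write-up is not yet a proof is precisely the step you correctly single out as the genuine content: the existence of $f_0\in H(v,F)\setminus\bigcup_{(w,G)\neq(v,F)}H(w,G)$. Neither of your two suggested justifications works as stated. Fixing the images of the vertices ``in an affinely independent way'' is only possible when $P$ is a simplex; for a general polytope the vertices are affinely dependent, so you cannot prescribe their images freely, and an affine extension from an affinely independent subset gives you no control over where the remaining vertices land. Likewise, ``perturbing any map in $\Hom(P,\int(Q))$'' does not by itself explain how the perturbation places $f_0(v)$ exactly in $\int(F)$ while keeping all other vertices strictly interior; a generic perturbation of an interior map stays interior. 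The missing ingredient is the vertex property of $v$: since $v$ is a vertex, there is an affine functional $\lambda:\aff(P)\to\RR$ with $\lambda(v)=0$ and $\lambda(P\setminus\{v\})\subset(0,1]$ after rescaling, and composing this projection $P\to[0,1]$ with an affine embedding $[0,1]\to Q$ sending $0$ to a point of $\int(F)$ and $(0,1]$ into $\int(Q)$ produces the required $f_0$. This is exactly the construction the paper uses, and with it inserted your argument is complete.
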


\begin{proof}
(1) For every facet $F\subset Q$ we fix a surjective affine map
$\phi_F:Q\to\RR_+$, vanishing only on $F$.

\medskip\noindent\emph{Claim}. The system of affine maps
\begin{align*}
\phi_{F,v}:\aff(P,Q)\to\RR,\quad f\mapsto(\phi_F\circ f)(v),\quad F\in\FF(Q),\quad v\in\vertex(P),
\end{align*}
defines the facets of $\Hom(P,Q)$.

\medskip The equality
$$
\Hom(P,Q)=\bigcap_{F,v}\{f\in\aff(P,Q)\ |\ \phi_{F,v}(f)\ge0\}
$$
is straightforward. But for any vertex $v\in P$ and any facet
$F\subset Q$, there is an affine map $f:P\to Q$ such that
$f(v)\in\int(F)$ and $f(w)\in\int(Q)$ for each vertex $w\in P$ with
$w\not=v$. To obtain such map, take the composition of a parallel
projection of $P$ onto $[0,1]$, mapping $v$ to $0$ and the rest of $P$ onto $(0,1]$, with an embedding $[0,1]\to Q$, mapping $0$ to $\int(F)$ and $(0,1]$ to $\int(Q)$. Thus,
$$
f\in\phi_{F,v}(0)^{-1}\setminus\bigcup_{
{\tiny
\begin{matrix}
\FF(Q)\setminus\{F\}\\
w\in\vertex(P)\setminus\{v\}
\end{matrix}
}}
\phi_{G,w}(0)^{-1},
$$
which proves the claim.

(2) In the notation introduced above, we have
$$
\int(\Hom(P,Q))=\bigcap_{F,v}\{f\in\aff(P,Q)\ |\ \phi_{F,v}(f)>0\},
$$
a nonempty bounded open subset of $\aff(P,Q)$ defined by linear inequalities, so $\Hom(P,Q)\subset\aff(P,Q)$ is a full-dimensional polytope.

\medskip\noindent The parts (3,4,5) follow from the universal equalities in Section \ref{polytopesandcones} and the natural bijections of sets for any object $a$ and any diagram $\mathcal D$ in any category:
\begin{align*}
\Hom(a,\lim_{\leftarrow}\mathcal D)\cong\lim_{\leftarrow}\Hom(a,\mathcal D),\\
\Hom(\lim_{\to}\mathcal D,a)\cong\lim_{\leftarrow}\Hom(\mathcal D,a);
\end{align*}
these bijections are affine maps in our polytopal setting.
\end{proof}

For cones we have the following analogous statment:

\begin{proposition}\label{coneversion}
Let $C_1$, $C_2$, $C_3$ be cones.
\begin{enumerate}
\item
$\Hom(C_1,C_2)$ is a cone in $\Hom(\lin C_1,\lin C_2)$, whose facets are naturally indexed by the pairs $(R,F)$, $R\subset C_1$ an extremal ray and $F\subset C_2$ a facet.
\item $\Hom(C_1,C_2\times C_3)\cong\Hom(C_1,C_2)\times\Hom(C_1,C_3)$.
\item $\Hom(C_1\times C_2,C_3)\cong\Hom(C_1,C_3)\times\Hom(C_2,C_3)$.
\item $\Hom(C_1,C_2\cap C_3)\cong\Hom(C_1,C_2)\cap\Hom(C_1,C_3)$.
\end{enumerate}
\end{proposition}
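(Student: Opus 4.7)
The plan is to mirror the proof of Proposition~\ref{homproperties}, systematically replacing affine maps by linear maps, vertices by extremal ray generators, and facets of the target polytope by facets of the target cone. Fix generators $v_1,\ldots,v_s$ of the extremal rays of $C_1$, and for each facet $F_j\subset C_2$ a linear functional $\phi_j:\lin C_2\to\RR$ nonnegative on $C_2$ and vanishing exactly on $F_j$. The candidate facet system for part~(1) is the family of linear functionals $\phi_{j,i}:f\mapsto\phi_j(f(v_i))$ on $\Hom(\lin C_1,\lin C_2)$. Since $C_1=\cone(v_1,\ldots,v_s)$ and $C_2=\bigcap_j\{\phi_j\ge0\}$, one reads off $\Hom(C_1,C_2)=\bigcap_{i,j}\{\phi_{j,i}\ge0\}$, a polyhedral cone. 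Pointedness follows because $\pm f\in\Hom(C_1,C_2)$ forces $f(C_1)\subset C_2\cap(-C_2)=\{0\}$ and $C_1$ spans $\lin C_1$; full-dimensionality comes from choosing any linear $\chi$ strictly positive on $C_1\setminus\{0\}$ (possible since $C_1$ is pointed) and any $q\in\int(C_2)$, and noting that $f(x)=\chi(x)q$ satisfies $\phi_{j,i}(f)>0$ for every $(i,j)$.

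Showing that each $\phi_{j_0,i_0}\ge0$ actually defines a facet is the main step. I plan to exhibit an $f_{\ast}\in\Hom(C_1,C_2)$ with $\phi_{j_0,i_0}(f_{\ast})=0$ and $\phi_{j,i}(f_{\ast})>0$ for all other $(j,i)$, placing $f_{\ast}$ in the relative interior of the face cut out by $\phi_{j_0,i_0}=0$. Since extremal rays of a pointed polyhedral cone are exposed, there is a linear functional $\pi:\lin C_1\to\RR$ with $\pi\ge0$ on $C_1$, $\pi(v_{i_0})=0$, and $\pi(v_i)>0$ for $i\ne i_0$. Fix $q\in\int(C_2)$ and $p\in\int(F_{j_0})$, set $f_1(x)=\pi(x)q$, and let $f_2:\lin C_1\to\lin C_2$ be any linear map with $f_2(v_{i_0})=p$ (extend $v_{i_0}$ to a basis of $\lin C_1$ and assign arbitrary values on the remaining basis vectors). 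Then $f_{\ast}:=\lambda f_1+f_2$ sends $v_{i_0}$ to $p\in\int(F_{j_0})$, giving $\phi_{j_0,i_0}(f_{\ast})=0$ and $\phi_{j,i_0}(f_{\ast})>0$ for $j\ne j_0$. For $i\ne i_0$, $\phi_j(f_{\ast}(v_i))=\lambda\pi(v_i)\phi_j(q)+\phi_j(f_2(v_i))$; since $\pi(v_i)\phi_j(q)>0$, the first term grows linearly in $\lambda$ and eventually dominates the constant second term, so the required strict positivity holds for all $(j,i)\ne(j_0,i_0)$ once $\lambda$ is sufficiently large.

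Parts~(2)--(4) are formal: in $\Cones$ the direct sum $C_1\times C_2$ is a biproduct (serving as both product and coproduct), and when $C_2,C_3$ share an ambient vector space, $C_2\cap C_3$ is a pullback. The universal-property bijections
\[
\Hom(a,\lim_{\leftarrow}\mathcal D)\cong\lim_{\leftarrow}\Hom(a,\mathcal D),\qquad \Hom(\lim_{\rightarrow}\mathcal D,a)\cong\lim_{\leftarrow}\Hom(\mathcal D,a)
\]
used at the end of the proof of Proposition~\ref{homproperties} deliver the three required isomorphisms; one only has to check that in the cone setting these natural set bijections are linear isomorphisms of the ambient hom-spaces, hence isomorphisms of the corresponding hom-cones. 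The real obstacle is the facet construction of part~(1): in the polytope case an affine map can be arranged to carry a chosen vertex straight into the interior of a chosen facet, but here linearity would send $v_{i_0}$ under a naive ``project-then-embed'' to $0\in C_2$, which lies in \emph{every} facet rather than only $F_{j_0}$; the two-summand construction $f_{\ast}=\lambda f_1+f_2$ with $\lambda\gg0$ is tailored precisely to overcome this.
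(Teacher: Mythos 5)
Your proposal is correct. The paper in fact gives no proof of Proposition~\ref{coneversion} (it is stated as the cone analogue of Proposition~\ref{homproperties}), and your argument is exactly the intended adaptation; in particular, your $f_{\ast}=\lambda f_1+f_2$ construction with $\lambda\gg0$ correctly supplies the one step that does not transfer verbatim from the polytope case, where the ``project onto $[0,1]$ then embed'' map would linearly send $v_{i_0}$ to $0$ and hence onto every facet of $C_2$ at once.
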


\section{Homogenization, duals, and tensor product}\label{tensor}
To a polytope $P$ in a vector space $E$ we associate the \emph{homogenization cone}:
$$
C(P)=\cone(P,1),\ \text{where}\ (P,1)=\{(x,1)\ |\ x\in P\}\subset E\oplus\RR.
$$

\medskip A \emph{graded cone} is a cone $C$ together with an affine
map $\phi:C\to\RR_+$, satisfying the condition
$\phi^{-1}(0)=\{0\}$. The map $\phi$ is called a \emph{grading}. All
of our cones admit a grading \cite[Prop.1.21]{KRIPO}. For a graded cone $(C,\phi)$ we have the \emph{de-homogenization polytope} $\phi^{-1}(1)$, denoted by $C_{[1]}$.

For a full-dimensional polytope $P$ with $0\in\int(P)$ and a full
dimensional cone $C$ in a vector space $E$, the corresponding dual
objects in the dual space $E^{\o}$ are defined by
\begin{align*}
&P^{\o}=\{h\in E^{\o}\ |\ h(x)\le1\ \text{for all}\ x\in P\},\\
&C^{\o}=\{h\in E^{\o}\ |\ h(C)\subset\RR_+\}=\Hom(C,\RR_+).
\end{align*}
The duals are also full-dimensional in their ambient space $E^{\o}$.

For the general facts of dual polytopes and dual cones the reader is referred to \cite[\S1.B]{KRIPO} and \cite[\S2.3]{ZiPOL}. (The latter uses `polar' instead of `dual').

Unless specified otherwise, for a polytope $P$ in a vector space $E$, we consider the cone $C(P)$ as a graded cone w.r.t. the grading
$\xymatrix{C(P)\ar@{^{(}->}[r]&E\oplus\RR\ar[rr]^{\text{pr}_{\RR}}&&\RR}$.

For two polytopes $P$ and $Q$, let $\phi$ and $\psi$ denote the gradings $C(P),C(Q)\to\RR_+$, respectively. Then we have the grading on the product of the homogenization cones:
$$
C(P)\times C(Q)\to\RR_+,\quad(x,y)\to\phi(x)+\psi(y),
$$
which results in the isomorphism of polytopes
$$
(C(P)\times C(Q))_{[1]}\cong\join(P,Q).
$$

Further, if $\dim P=\dim E$ and $0\in\int(P)$ (i.~e., $(0,1)\in\int(P,1)$), then $C(P)^{\o}$ will be viewed as a graded cone via the grading $C(P)^{\o}\to\RR_+$, $h\mapsto h((0,1))$. It is crucial that $h((0,1))=0$ iff $h=0$.

\begin{definition}\label{tensordefinition}
Let $E$ and $E'$ be vector spaces, $C\subset E$ and $C'\subset E'$ be cones, and $P\subset E$ and $Q\subset E'$ be polytopes. We define the tensor products as follows:
\begin{align*}
&C\otimes C'=\cone\{(x\otimes y)\ |\ x\in C,\ y\in C'\}\subset E\otimes E',\\
&P\otimes Q=\conv\big(\big\{(x\otimes y, x, y)\ |\ x\in P,\ y\in Q\big\}\big)\subset\big(E\otimes E'\big)\oplus E\oplus E'.
\end{align*}
\end{definition}

\begin{proposition}\label{conetensor}
Let $C,C_1,C_2,C_3$ be cones and $P$ be a polytope.
\begin{enumerate}
\item
The bilinear map $C_1\times C_2\longrightarrow C_1\otimes C_2$, $(x,y)\mapsto x\otimes y$, solves the following universal problem: any bilinear map $C_1\times C_2\longrightarrow C_3$ passes through a unique linear map $\phi$,
$$
\xymatrix{
C_1\times C_2\ar@{=>}[r]\ar@{=>}[rd]_f&C_1\otimes C_2\ar@{.>}[d]_{\circlearrowright\ \ }^{\exists!\phi}\\
&C_3
}\ .
$$
Equivalently, $\Hom(C_1\otimes C_2,C_3)\cong\Hom(C_1,\Hom(C_2,C_3))$, i.~e., $\otimes,\Hom:\Cones\times\Cones\to\Cones$ form a pair of left and right adjoint functors.
\item $\Hom(C_1,C_2^{\o})^{\o}\cong C_1\otimes C_2\cong\Hom(C_2,C_1^{\o})^{\o}$ for $C_1$ and $C_2$ full-dimensional.
\item\begin{align*}
&\dim(C_1\otimes C_2)=\dim C_1\dim C_2=\dim\Hom(C_1,C_2),\\
&C\otimes\RR_+\cong C,\ (C_1\otimes C_2)\otimes C_3\cong C_1\otimes(C_2\otimes C_3),\ C_1\otimes C_2\cong C_2\otimes C_1,\\
&C_1\otimes(C_2\times C_3)\cong(C_1\otimes C_2)\times(C_1\otimes C_3).
\end{align*}

\item The extremal rays of $C_1\otimes C_2$ are the tensor products of the extremal rays of $C_1$ and $C_2$.
\item $\Hom(P,C)\cong\Hom(C(P),C)$, where the $\Hom$ on the right refers to the set of affine maps from $P$ to $C$.
\item If $P$ is full-dimensional and $0\in\int(P)$ then:
\begin{align*}
&C(P^{\o})\cong C(P)^{\o}\ \text{as graded cones, i.~e.,}\ P^{\o}\cong C(P)^{\o}_{[1]},\ \text{and}\\
&\Hom(P,[0,1])\cong C(P^{\o})\cap((0,1)-C(P^{\o})).
\end{align*}
\end{enumerate}
\end{proposition}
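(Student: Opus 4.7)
For the cone statements (1)--(4), I would mirror the vector-space construction, using that every cone spans its own linear hull. For (1), given a bilinear $f : C_1 \times C_2 \to C_3$, I extend it first to a bilinear map on $\lin(C_1) \times \lin(C_2)$ (every element of $\lin(C_i)$ is a difference of cone elements, so well-definedness and bilinearity follow from those of $f$), invoke the ordinary vector-space universal property to obtain a unique linear $\bar\phi : \lin(C_1) \otimes \lin(C_2) \to \lin(C_3)$, and observe that $C_1 \otimes C_2$ is cone-generated by the pure tensors $x \otimes y$ with $x \in C_1$, $y \in C_2$, all of which $\bar\phi$ sends into $C_3$. The restriction $\phi$ is then the required linear map, and uniqueness is immediate. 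The adjunction $\Hom(C_1 \otimes C_2, C_3) \cong \Hom(C_1, \Hom(C_2, C_3))$ rephrases this universal property. Part (2) follows by specializing $C_3 = \RR_+$, so that $(C_1 \otimes C_2)^{\o} \cong \Hom(C_1, C_2^{\o})$, and then applying biduality for full-dimensional cones. For (3), the equality $\dim(C_1 \otimes C_2) = \dim C_1 \dim C_2$ comes from $\lin(C_1 \otimes C_2) = \lin(C_1) \otimes \lin(C_2)$ together with the fact that $C_1 \otimes C_2$ has nonempty interior there (tensor interior points of each factor); agreement with $\dim \Hom(C_1, C_2)$ is Proposition \ref{coneversion}(1). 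The unit, associativity, commutativity, and distributivity-over-product isomorphisms each follow by checking that both sides represent the same functor via the universal property.

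For (4), I would combine the duality in (2) with the facet description of Proposition \ref{coneversion}(1). The facets of $\Hom(C_1, C_2^{\o}) \cong (C_1 \otimes C_2)^{\o}$ are indexed by pairs $(R, F)$ with $R$ an extremal ray of $C_1$ and $F$ a facet of $C_2^{\o}$, and the facets of $C_2^{\o}$ correspond bijectively to the extremal rays of $C_2$. Dually, the facets of $(C_1 \otimes C_2)^{\o}$ correspond to extremal rays of $C_1 \otimes C_2$, giving the asserted bijection. Identifying the ray associated with $(R_1, R_2)$ as $R_1 \otimes R_2$ is then a direct check: the supporting hyperplane on $(C_1 \otimes C_2)^{\o}$ coming from $(R_1, R_2)$ is exactly the one vanishing on all pure tensors from $R_1 \times R_2$.

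Parts (5) and (6) concern the homogenization bridge between $\Pol$ and $\Cones$. For (5), an affine map $f : P \to C$ is determined by its values on $\vertex(P)$, which under $x \mapsto (x, 1)$ are the generators of the extreme rays of $C(P)$ at height~$1$; so $(x, 1) \mapsto f(x)$ extends uniquely by linearity to $\tilde f : C(P) \to \lin(C)$, and this extension lands in $C$ because the values on the vertices already do and $C$ is closed under nonnegative combinations. The inverse sends $g : C(P) \to C$ to $x \mapsto g(x, 1)$. For (6), I compute both cones explicitly in $E^{\o} \oplus \RR$: an element $(h, t)$ lies in $C(P^{\o})$ iff $t \geq 0$ and $h(x) \leq t$ for all $x \in P$, while $(h, t) \in C(P)^{\o}$ iff $h(x) + t \geq 0$ for all $x \in P$, so the sign flip $(h, t) \mapsto (-h, t)$ identifies them; both gradings are the second coordinate, so this is a graded-cone isomorphism. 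For the description of $\Hom(P, [0, 1])$, I write an affine $F : P \to \RR$ as $(h, c) \in E^{\o} \oplus \RR$, read off $F \geq 0$ as $(h, c) \in C(P)^{\o}$ and $F \leq 1$ as $(0, 1) - (h, c) \in C(P)^{\o}$, intersect, and transport across the graded isomorphism to obtain the stated form.

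The main obstacle I anticipate is part (4): the duality--combinatorics translation yields the bijection of facets cleanly, but matching the specific face of $(C_1 \otimes C_2)^{\o}$ with the geometric ray $R_1 \otimes R_2 \subset C_1 \otimes C_2$ needs the correct choice of supporting functional, where a parametrization mistake is most likely. The secondary delicate point is the sign convention in (6), which must be tracked consistently across $\aff(P, \RR) = E^{\o} \oplus \RR$ and the isomorphism $C(P^{\o}) \cong C(P)^{\o}$, or else the formula for $\Hom(P, [0,1])$ emerges with a spurious reflection.
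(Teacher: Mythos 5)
Your proposal is correct. For parts (1)--(3), (5) and (6) it matches the paper's (largely omitted or explicit-formula) treatment: the paper dismisses (1)--(3) as straightforward analogues of linear algebra, and for (5)--(6) writes down precisely the homogenization/dehomogenization maps you describe -- your sign flip $(h,t)\mapsto(-h,t)$ is the coordinate form of the paper's $\gamma,\delta$, and for the $\Hom(P,[0,1])$ identity the paper routes through $[0,1]=\RR_+\cap(1-\RR_+)$, part (5) and Proposition~\ref{coneversion}(4) rather than your direct coordinate computation, but the two are equivalent. The genuine divergence is part (4). The paper argues directly: for extremal rays $R_i=\RR_+x_i\subset C_i$ it exhibits the supporting functional $\nu_1\otimes\phi_2+\phi_1\otimes\nu_2$ (with $\nu_i$ vanishing exactly on $R_i$ and $\phi_i$ gradings) whose zero set on $C_1\otimes C_2$ is exactly $R_1\otimes R_2$, and for the converse shows that any extremal ray is spanned by a pure tensor $x_1\otimes y_1$ with $x_1,y_1$ extremal, via a segment-perturbation argument. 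You instead deduce (4) from $(C_1\otimes C_2)^{\o}\cong\Hom(C_1,C_2^{\o})$, the facet description of hom-cones in Proposition~\ref{coneversion}(1), and the facet--ray anti-isomorphism for full-dimensional pointed cones; your identification of the ray attached to $(R_1,R_2)$ as the common zero set of the facet $\{f\ |\ f(x_1)(x_2)=0\}$ is the right check. This alternative works, but you should make two reductions explicit: pass to the linear hulls so that $C_1,C_2$ are full dimensional (part (4) is stated for arbitrary pointed cones, while (2) and the duality you invoke need full dimensionality), and note that $C_1\otimes C_2$ is pointed (it carries the grading $\phi_1\otimes\phi_2$), so the face-lattice anti-isomorphism applies. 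The trade-off: the paper's argument is self-contained, whereas yours leans on Proposition~\ref{coneversion}(1) (stated without proof in the paper); in exchange, your route gets the converse direction -- that every extremal ray is a product of extremal rays -- for free from the facet count, with no perturbation argument.
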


\begin{proof}
(1--3) are straightforward analogues of the corresponding linear algebra facts.

\medskip\noindent\emph{Notice.} Actually, (2) holds true for general, not necessarily full
dimensional, cones. This is so because $C^{\o\o}=C$ for general cones. Here we have to restrict to the full dimensional
case because from the beginning we have restricted to the case of pointed cones, and the dual of a cone is pointed iff the cone is full dimensional.

\medskip\noindent(4) Let $R_i\subset C_i$ be extremal rays, $\nu_i:C_i\to\RR_+$ be linear maps with $\nu_i^{-1}(0)=R_i$, and $\phi_i:C_i\to\RR_+$ be gradings, $i=1,2$. Then the linear map $$
\nu_1\otimes\phi_2+\phi_1\otimes\nu_2:C_1\otimes C_2\to\RR_+$$
satisfies the condition
$$
(\nu_1\otimes\phi_2+\phi_1\otimes\nu_2)^{-1}(0)=R_1\otimes R_2.
$$

Conversely, let $\xi=\sum_{j=1}^kx_j\otimes y_j$ be an element of an
extremal ray $R\subset C_1\otimes C_2$, where $x_j\in
C_1\setminus\{0\}$ and $y_j\in C_2\setminus\{0\}$ for all $j$. Then
$x_j\otimes y_j\in R$ for all $j$ because $x_j\otimes y_j\not=0$ for
all $j$ -- the tensor product of two nonzero vectors is a nonzero
vector. Consequently, there exist real
numbers $t_j>0$ with $x_j\otimes y_j=(t_j x_1)\otimes y_1$ for every
$j$. In particular, $R=\RR_+(x_1\otimes y_1)$. All we need to show is
that $x_1$ and $y_1$ are extremal generators of $C_1$ and $C_2$,
respectively. Without loss of generality we can assume that $x_1$ is
not an extremal generator of $C_1$. There is a segment $[u,v]\in C_1$
such that $x_1\in(u,v)$ and $0\notin\aff([u,v])$. But then the subset
$[u\otimes y_1,v\otimes y_1]\subset C_1\otimes C_2$ is a nondegenerate
segment and $0\notin\aff(u\otimes y_1,v\otimes y_1)$ -- one uses the
same fact on the tensor product of nonzero vectors. This contradicts the assumption that $x_1\otimes y_1\in R$.

\medskip\noindent(5) We have the mutually inverse linear maps:
\begin{align*}
&\alpha:\Hom(P,C)\to\Hom(C(P),C),\quad\alpha(h)((x,z))=h(z^{-1}x),\ (x,z)\in C(P)\setminus\{0\},\\
&\beta:\Hom(C(P),C)\to\Hom(P,C),\quad \beta(h)(x)=h((x,1)),\quad x\in P.
\end{align*}

\medskip\noindent(6) We have the following mutually inverse affine maps:
\begin{align*}
&\gamma:P^{\o}\to C(P)^{\o}_{[1]},\quad\gamma(h)((x,z))=z-z\cdot h(z^{-1}x),\ (x,z)\in C(P)\setminus\{0\},\\
&\delta:C(P)^{\o}_{[1]}\to P^{\o},\quad\delta(h)(x)=1-h((x,1)),\quad x\in P.
\end{align*}
That $\gamma$ and $\delta$ in fact evaluate in the right objects and have all necessary properties is straightorward.

For the second isomorphism, we note that $[0,1]=\RR_+\cap(1-\RR_+)$ and so (5), the previous isomorphism, and Proposition \ref{coneversion}(4) apply.
\end{proof}

\begin{remark}\label{cautionprojective}(1) Caution is needed in extending linear algebra facts to cones. For instance, the canonical map $C_1^{\o}\otimes C_2^{\o}\to(C_1\otimes C_2)^{\o}$ is in general \emph{not} an isomorphism: by Proposition \ref{conetensor}(4), the extremal rays of the first cone naturally correspond to the pairs of facets $F_1\subset C_1$ and $F_2\subset C_2$, while those of the second one are in bijective correspondence with the facets of $C_1\otimes C_2$. But there is no natural correspondence between the two sets. In fact, they can easily be non-bijective -- a combined effect of Proposition \ref{homproperties}(1) and the upcoming Proposition \ref{tensorproperties}(1).

(2) As a consequence of (6), for two isomorphic full dimensional
polytopes $P_1\cong P_2$ with $0\in\int(P_1)\cap\int(P_2)$, although
$P_1^{\o}$ and $P_2^{\o}$ are in general not isomorphic in $\Pol$, we
nonetheless have the isomorphism
$$
C(P_1^{\o})\cap((0,1)-C(P_1^{\o}))\cong C(P_2^{\o})\cap((0,1)-C(P_2^{\o})).
$$
In particular, we recover the standard fact that, when a full-dimensional polytope moves around the origin so that the origin stays in the interior of the polytope, the resulting dual polytopes are all projectively equivalent.
\end{remark}

For polytopes $P,Q,R$, a map $P\times Q\to R$ is called \emph{bi-affine} if, upon fixing one component, it is affine w.r.t. the other component.

\begin{proposition}\label{tensorproperties}
Let $P,Q,R$ be polytopes.
\begin{enumerate}
\item $C(P\otimes Q)\cong C(P)\otimes C(Q)$, or, equivalently,
$$
\vertex(P\otimes Q)=\{(v\otimes w,v,w)\ |\ v\in\vertex(P),\ w\in\vertex(Q)\}.
$$
\item
The bi-affine map $P\times Q\longrightarrow P\otimes Q$, $(x,y)\mapsto(x\otimes y,x,y)$, solves the following universal problem: any bi-affine map $P\times Q\longrightarrow R$ passes through a unique affine map $\phi$,
$$
\xymatrix{
P\times Q\ar@{=>}[r]\ar@{=>}[rd]_f&P\otimes Q\ar@{.>}[d]_{\circlearrowright\ \ }^{\exists!\phi}\\
&R\\
}\ .
$$
Equivalently, $\Hom(P\otimes Q, R)\cong \Hom(P,\Hom(Q,R))$, i.e., $\otimes, \Hom:\Pol\times\Pol\to\Pol$ form a pair of left and right adjoint functors.
\item
\begin{align*}
&\dim(P\otimes Q)=\dim P\dim Q+\dim P+\dim Q,\\
&P\otimes\{*\}\cong P,\ (P\otimes Q)\otimes R\cong P\otimes(Q\otimes R),\ P\otimes Q\cong Q\otimes P.\\
&P\otimes\join(Q,R)\cong\join(P\otimes Q,P\otimes R),\\
&\vertex(P\otimes Q)=\{(v\otimes w,v,w)\ |\ v\in\vertex(P),\ w\in\vertex(Q)\}.
\end{align*}
\end{enumerate}
\end{proposition}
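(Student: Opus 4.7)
The crux of the proposition is part (1); once it is established, parts (2) and (3) reduce to the cone-theoretic analogues already proven in Proposition \ref{conetensor}. So my plan is to concentrate most of the effort on (1), and then treat (2) and (3) by homogenization.

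For (1), the key observation is the natural direct-sum decomposition
\[
(E\oplus\RR)\otimes(E'\oplus\RR) \;=\; (E\otimes E')\oplus E\oplus E'\oplus\RR,
\]
under which a tensor of augmented vectors reads $(x,s)\otimes(y,t)=(x\otimes y,\,tx,\,sy,\,st)$. In particular $(v,1)\otimes(w,1)=(v\otimes w,v,w,1)$, which matches exactly the generators of $C(P\otimes Q)\subseteq(E\otimes E')\oplus E\oplus E'\oplus\RR$ as defined in Definition \ref{tensordefinition}. By Proposition \ref{conetensor}(4), the extremal rays of $C(P)\otimes C(Q)$ are the pairwise tensor products of extremal rays of $C(P)$ and $C(Q)$; since those extremal rays are $\RR_+(v,1)$ with $v\in\vertex(P)$ and $\RR_+(w,1)$ with $w\in\vertex(Q)$, we obtain exactly the rays $\RR_+(v\otimes w,v,w,1)$. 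Dehomogenizing at grading level $1$ simultaneously gives the isomorphism $C(P\otimes Q)\cong C(P)\otimes C(Q)$ and the vertex description of $P\otimes Q$.

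For (2), a bi-affine map $f:P\times Q\to R$ homogenizes uniquely to a bilinear map $\widetilde f:C(P)\times C(Q)\to C(R)$ of graded cones, defined on positively graded arguments by $\widetilde f((x,s),(y,t))=(st\cdot f(x/s,y/t),\,st)$ and extended by continuity to the boundary. Bilinearity in each variable is precisely the content of the bi-affineness of $f$ together with homogeneity in the grading. Proposition \ref{conetensor}(1) then factors $\widetilde f$ uniquely through a linear map $C(P)\otimes C(Q)\to C(R)$, which by (1) is a grading-preserving linear map $C(P\otimes Q)\to C(R)$, i.e.\ an affine map $\phi:P\otimes Q\to R$ with $\phi\circ\mu=f$, where $\mu(x,y)=(x\otimes y,x,y)$. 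The adjunction $\Hom(P\otimes Q,R)\cong\Hom(P,\Hom(Q,R))$ follows by viewing a bi-affine map as a $\Hom$-valued affine map in each variable separately.

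For (3), each identity is obtained by applying $C(-)$ and invoking the corresponding cone identity from Proposition \ref{conetensor}(3), then de-homogenizing. The dimension formula drops out of $\dim(P\otimes Q)+1=\dim C(P\otimes Q)=\dim C(P)\dim C(Q)=(\dim P+1)(\dim Q+1)$. The unit law uses $C(P)\otimes\RR_+\cong C(P)$ with $\RR_+=C(\{*\})$; associativity and commutativity transfer verbatim from the cone side; distributivity over join uses the identification $C(\join(Q,R))\cong C(Q)\times C(R)$ from Section \ref{tensor} together with $C(P)\otimes(C(Q)\times C(R))\cong(C(P)\otimes C(Q))\times(C(P)\otimes C(R))$. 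The vertex formula is just a restatement of (1). The only real obstacle is the bookkeeping in (1) — keeping track of the four-term direct-sum decomposition of the tensor product of augmented spaces — and the verification that the scaling extension in (2) is genuinely bilinear and continuous at the boundary; both are routine once the notation is fixed.
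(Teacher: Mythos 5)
Your proposal is correct and follows essentially the same route as the paper: part (1) via the identification $(u,1)\otimes(v,1)=(u\otimes v,u,v,1)$ under the decomposition $(E\oplus\RR)\otimes(E'\oplus\RR)\cong(E\otimes E')\oplus E\oplus E'\oplus\RR$, part (2) by homogenizing the bi-affine map to a bilinear map of cones and invoking Proposition~\ref{conetensor}(1), and part (3) by transfer from the cone identities. The only cosmetic difference is that you make explicit the appeal to Proposition~\ref{conetensor}(4) for the vertex description, which the paper leaves implicit in the ``equivalently'' of the statement.
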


\begin{proof} (1) We have
\begin{align*}
C(P)\otimes C(Q)=&\cone\{(x\otimes y)\ |\ x\in C(P),\ y\in C(Q)\}=\\
&\cone\{(x\otimes y)\ |\ x\in (P,1),\ y\in (Q,1)\}=\\
&\cone\{(u\otimes v,u,v,1)\ |\ u\in P,\ v\in Q\}=C(P\otimes Q).
\end{align*}
\medskip\noindent(2) A bi-affine map $\psi:\xymatrix{P\times Q\ar@{=>}[r]& R}$ gives rise to the following bilinear map:
\begin{align*}
&\xymatrix{ C(P)\times C(Q)\ar@{=>}[r]& C(R)},\\
&((u,x),(v,y))\mapsto\left(xy\cdot\psi\left(x^{-1}u,y^{-1}v\right),xy\right),\quad x,y>0.
\end{align*}
By Proposition \ref{conetensor}(1), we have the linear map:
\begin{align*}
&C(P)\otimes C(Q)\to C(R),\\
&(u,x)\otimes(v,y)\mapsto\left(xy\cdot\psi\left(x^{-1}u,y^{-1}v\right),xy\right),\quad x,y>0,
\end{align*}
which shows that the composite affine map
$$
P\otimes Q\cong(P\otimes Q,1)\hookrightarrow C(P\otimes Q)\cong C(P)\otimes C(Q)\to C(R),
$$
evaluates in $(R,1)$. The uniqueness is straightforward.

\medskip\noindent(3) This follows from (1) and the corresponding parts of Proposition \ref{conetensor}.
\end{proof}

For polytopes $P,Q,R$, the pairing
$$
\Hom(P,Q)\times\Hom(Q,R)\to\Hom(P,R),\qquad (f,g)\mapsto g\circ f,
$$
is clearly bi-affine. The pentagon and hexagon coherence conditions of the bifunctor $\otimes:\Pol\times\Pol\to\Pol$ are inherited from the corresponding conditions of the tensor product of vector spaces.  So Proposition \ref{tensorproperties} has the
following consequence.

\begin{corollary}\label{enriched}
$(\Pol,\otimes)$ is a symmetric closed monoidal category w.r.t. which $\Pol$
is self-enriched.
\end{corollary}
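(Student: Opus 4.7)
The plan is to assemble the data of a symmetric monoidal closed structure from what is already established in Propositions \ref{conetensor} and \ref{tensorproperties}, and then to verify coherence by lifting everything to vector spaces via the homogenization functor $P\mapsto C(P)$.

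First, I would record the structural data. The unit object is the one-point polytope $\{*\}$. The bifunctors $\otimes,\Hom:\Pol\times\Pol\to\Pol$ are already in place (the first by Definition \ref{tensordefinition} and the second by Proposition \ref{homproperties}(1)); functoriality on morphisms is immediate from the respective universal properties. The associator $a_{P,Q,R}:(P\otimes Q)\otimes R\to P\otimes(Q\otimes R)$, the unitor $\ell_P:\{*\}\otimes P\to P$, and the braiding $c_{P,Q}:P\otimes Q\to Q\otimes P$ are the isomorphisms listed in Proposition \ref{tensorproperties}(3). Because the vertices of an iterated tensor are indexed by tuples of vertices of the factors (Proposition \ref{tensorproperties}(1), applied inductively), naturality in each argument reduces to observing that each of these isomorphisms acts by the expected reindexing of vertex tuples.

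Next I would address the pentagon, triangle, and hexagon coherence axioms. Proposition \ref{tensorproperties}(1) provides natural isomorphisms $C(P\otimes Q)\cong C(P)\otimes C(Q)$, so applying homogenization turns each coherence diagram in $\Pol$ into the corresponding diagram in $(\Cones,\otimes)$. The cone tensor product is by definition a subcone of the usual vector space tensor product, and the associator, unitor, and braiding for cones are restrictions of their classical $\Vect$ counterparts. Since pentagon, triangle, and hexagon hold in $(\Vect,\otimes)$, they hold in $(\Cones,\otimes)$ by restriction and then in $(\Pol,\otimes)$ after passing to the grade-$1$ slice. This is what the sentence preceding the corollary refers to.

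For the closed structure, Proposition \ref{tensorproperties}(2) already provides a natural adjunction $\Hom(P\otimes Q,R)\cong\Hom(P,\Hom(Q,R))$, so $\Hom$ serves as the internal hom. For self-enrichment I would promote ordinary composition to an internal one: the pairing $(f,g)\mapsto g\circ f$ from $\Hom(P,Q)\times\Hom(Q,R)$ to $\Hom(P,R)$ is bi-affine, so by Proposition \ref{tensorproperties}(2) it descends to a unique affine composition morphism $\mu_{P,Q,R}:\Hom(P,Q)\otimes\Hom(Q,R)\to\Hom(P,R)$, while the identity gives a unit $j_P:\{*\}\to\Hom(P,P)$, $*\mapsto\mathrm{id}_P$. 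The enriched associativity and unit axioms are equalities of affine maps out of iterated tensor products; by the uniqueness clause of the universal property, each reduces to the usual set-level associativity and unitality of composition of affine maps, and so holds. The only step deserving genuine care is coherence, but it is essentially formal: the entire $\Pol$-level monoidal structure is obtained from $(\Vect,\otimes)$ by restriction to cones and slicing at grade one, so coherence in $\Pol$ inherits from coherence in $\Vect$.
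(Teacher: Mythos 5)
Your proposal is correct and follows essentially the same route as the paper: the monoidal data come from Proposition \ref{tensorproperties}(1)--(3), coherence is inherited from the tensor product of vector spaces via homogenization, the adjunction $\Hom(P\otimes Q,R)\cong\Hom(P,\Hom(Q,R))$ gives the closed structure, and the enriched composition is obtained by applying the universal property of $\otimes$ to the bi-affine pairing $(f,g)\mapsto g\circ f$. The paper states all of this in two sentences before the corollary; you have simply written out the details it leaves implicit.
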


As a more computational application, we have the following.

\begin{corollary}\label{explicitcomputations} Let $P$ be a polytope
  and $n$ and $m$ be natural numbers. Then:
\begin{enumerate}
\item $\Hom(\Delta_n,P)\cong P^{n+1}$ and $\Delta_n\otimes P\cong P^{\join^{n+1}}$ -- the $(n+1)$-fold iteration of join, applied to $P$.
\item $\Hom(P,\Box_n)\cong\Diamond(P^{\o})^n$ for $P$ centrally symmetric w.r.t. $0$. In particular, $\Hom(\Box_m,\Box_n)\cong(\Diamond_{m+1})^n$.
\item $\Hom(\Box_m,\Diamond_n)\cong\Hom(\Box_{n-1},\Diamond_{m+1})$.
\end{enumerate}
\end{corollary}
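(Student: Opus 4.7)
The plan is to prove each of the three isomorphisms by decomposing the polytopes $\Delta_n$, $\Box_n$, $\Diamond_n$ into simpler pieces and repeatedly invoking the hom/tensor/join identities from Propositions~\ref{homproperties} and~\ref{tensorproperties}. For (1), I would observe that $\Delta_n\cong\join(\ast,\ldots,\ast)$ is the iterated join of $n+1$ points. Iterating Proposition~\ref{homproperties}(5) then yields $\Hom(\Delta_n,P)\cong\Hom(\ast,P)^{n+1}\cong P^{n+1}$, while iterating the distributivity $P\otimes\join(Q,R)\cong\join(P\otimes Q,P\otimes R)$ from Proposition~\ref{tensorproperties}(3), combined with the unit law $\ast\otimes P\cong P$, gives $\Delta_n\otimes P\cong\join(P,\ldots,P)=P^{\join^{n+1}}$.

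For (2), since $\Box_n\cong[-1,1]^n$, Proposition~\ref{homproperties}(3) reduces the task to the case $n=1$, namely $\Hom(P,[-1,1])\cong\Diamond(P^{\o})$. An affine map $f:P\to[-1,1]$ splits as $f(x)=L(x)+c$ with $c=f(0)\in[-1,1]$ and $L$ linear; central symmetry of $P$ with respect to $0$ makes $L$ well defined on $\lin(P)$. The joint constraints $|L(x)+c|\le1$ and $|{-L(x)}+c|\le1$ for all $x\in P$ are equivalent to $c\in[-1,1]$ together with $L\in(1-|c|)P^{\o}$, which is exactly the parametrization of the bipyramid $\Diamond(P^{\o})$ with base $P^{\o}$ at height $c=0$ and apexes at $c=\pm1$. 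The specialization to $\Hom(\Box_m,\Box_n)\cong(\Diamond_{m+1})^n$ is then immediate from $\Box_m^{\o}=\Diamond_m$ and the obvious identification $\Diamond(\Diamond_m)=\Diamond_{m+1}$.

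For (3), I would extract from the $n=1$ instance of (2) the identity $\Diamond_{k+1}\cong\Hom(\Box_k,[-1,1])$ and then chain the tensor-hom adjunction (Proposition~\ref{tensorproperties}(2)) with the commutativity of $\otimes$:
\begin{align*}
\Hom(\Box_m,\Diamond_n)
&\cong\Hom(\Box_m,\Hom(\Box_{n-1},[-1,1]))\\
&\cong\Hom(\Box_m\otimes\Box_{n-1},[-1,1])\\
&\cong\Hom(\Box_{n-1}\otimes\Box_m,[-1,1])\\
&\cong\Hom(\Box_{n-1},\Hom(\Box_m,[-1,1]))\\
&\cong\Hom(\Box_{n-1},\Diamond_{m+1}).
\end{align*}
The only step that demands genuine geometric work is the bipyramid identification in~(2); everything else is formal manipulation with the already-established adjunctions, and Corollary~\ref{enriched} guarantees that all of the bijections produced above are in fact affine isomorphisms of polytopes rather than mere bijections of underlying sets.
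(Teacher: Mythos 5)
Your proposal is correct, and parts (1) and (3) follow essentially the same route as the paper: (1) rests on $\Delta_n\cong(*)^{\join^{n+1}}$ together with the extension property of maps from $\vertex(\Delta_n)$ (equivalently, iterating Proposition~\ref{homproperties}(5)) and the distributivity of $\otimes$ over $\join$; (3) is exactly the paper's ``apply the adjunction $\otimes\dashv\Hom$ twice,'' which you have simply written out in full, using commutativity of $\otimes$ in the middle. The one place where you genuinely diverge is the key geometric step of (2), the identification $\Hom(P,[-1,1])\cong\Diamond(P^{\o})$. The paper obtains this by invoking Proposition~\ref{conetensor}(6), i.e.\ the homogenization-cone description $\Hom(P,[0,1])\cong C(P^{\o})\cap((0,1)-C(P^{\o}))$, and then observing that for centrally symmetric $P$ this intersection of two opposite cones is the bipyramid over $P^{\o}$. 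You instead parametrize an affine map as $f(x)=L(x)+c$ and check directly that the constraint $f(P)\subset[-1,1]$, symmetrized via $x\mapsto -x$, is equivalent to $|c|\le1$ and $L\in(1-|c|)P^{\o}$; the elementary inequality manipulation ($L(x)\le 1-c$ and $L(x)\le 1+c$ give $L(x)\le 1-|c|$, and symmetrically for the lower bound) is correct, and this is exactly the defining description of $\Diamond(P^{\o})$ since the barycenter of $P^{\o}$ is the origin. Your version is more self-contained and avoids the duality machinery of Section~\ref{tensor}; the paper's version buys uniformity with the rest of its cone/dual formalism and yields the more general statement for non-symmetric $P$ (where one still gets the intersection of two cones, just not a bipyramid). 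Both are valid, and your closing remark that all bijections are affine is justified since every identification you use is one of the affine isomorphisms established in Propositions~\ref{homproperties} and~\ref{tensorproperties}.
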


\begin{proof} (1) Any map $\vertex(\Delta_n)\to P$ extends uniquely to an affine map $\Delta_n\to P$.

For the second isomorphism, one uses Proposition \ref{tensorproperties}(3), the third isomorphism, and the fact that $\Delta_n\cong(*)^{\join^{n+1}}$.

\medskip\noindent(2) This follows from Propositions \ref{homproperties}(3) and \ref{conetensor}(6), the 2nd isomorphism, and the fact that $P^{\o}$ is also symmetric w.r.t. the origin. (See Remark \ref{cautionprojective}(2).)

\medskip\noindent(3) One applies the conjunction $\otimes\dashv\Hom$ twice.
\end{proof}

We have not been able to find $\rank \ge2$ elements of $\vertex(\Hom(\Box_m,\Delta_n))$. More generally, one would wish to have Corollary \ref{explicitcomputations} completed by an explicit description of the hom-polytopes between the general regular polytopes. Dimensions 3 and 4 seem more challenging than $\dim\ge5$. Currently, even the 2-dimensional case, or just a satisfactory description of $\vertex(\Hom(P_n,P_m))$, is out of reach. Partial results in the latter direction are presented in Section \ref{regular}.

In the context of extremal maps from regular polytopes, it is
interesting to remark that the octahedron $\Diamond_3$ admits an affine embedding into \emph{any} simple 3-dimensional polytope $P$ so that the vertices of $\Diamond_3$ map to the boundary $\partial P$ \cite{Akopyan}.

\medskip We conclude the section with one application of the tensor product, yielding point configurations with interesting extremal properties.

\begin{example}\label{neighboring}
Let $P$ and $Q$ be polytopes in a vector space $E$.

\medskip\noindent(1) For a vertex $v\in P$ and an edge $[w_1,w_2]\subset Q$, the segment
$$
[(v\otimes w_1,v,w_1),(v\otimes w_2,v,w_2)]\subset E^{\otimes^2}\oplus E^{\oplus^2}
$$
is easily seen to be an edge of $P\otimes Q$.

\medskip\noindent(2) The tensor square $P^{\otimes^2}=P\otimes P$ admits less obvious edges:
$$
[(v\otimes v,v,v),(w\otimes w,w,w)]\subset P^{\otimes^2},\quad v,w\in\vertex(P),\ v\not=w.
$$

To show this, assume $v,w\in\vertex(P)$, $v\not=w$ and let $\phi_v,\phi_w:C(P)\to\RR_+$ be linear maps, vanishing on exactly $\cone(v')$ and $\cone(w')$, respectively, where $v'=(v,1)$ and $w'=(w,1)$. Then the linear map
$$
\xymatrix{
C(P^{\otimes^2})\ar[r]^{\cong}&C(P)^{\otimes^2}\ar[rrr]^{\phi_v\otimes\phi_w+\phi_w\otimes\phi_v}&&&\RR_+
}
$$
(the isomorphism on the left is from Proposition \ref{tensorproperties}(1)), vanishes
on exactly the 2-face
$$
\cone(v'\otimes v',w'\otimes w')\subset C(P^{\otimes^2}).
$$

\medskip As a consequence of the two types of edges, if $P$ has $m$ vertices and $n$ edges, then the tensor square $P^{\otimes^2}$ has at least $2mn+\frac{m(m-1)}2$ edges. This counting is, however, far from the complete list; an example is $(\Delta_n)^{\otimes^2}=\Delta_{n^2+2n}$.

\medskip\noindent(3) It follows from (2) that the following polytope is \emph{neighborly}:
$$
\conv\big((v\otimes v,v,v)\ |\ v\in\vertex(P)\big)\subset E^{\otimes^2}\oplus E^{\oplus^2},
$$
i.~e., any two vertices are joined by an edge. (In \cite[Ch.0]{ZiPOL} the property is called \emph{2-neighborly}.)

\medskip\noindent(4) A similar argument implies that for the unit Euclidean ball $B_d\subset\RR^d$ and any two points $v,w\in S^{d-1}=\partial B_d$, the segment
$$
[(v\otimes v,v,v),(w\otimes w,w,w)]\subset(B_d)^{\otimes^2}\quad (=\conv((x\otimes y,x,y)\ |\ x,y\in B_d))
$$
is an extremal subset (\cite[p.10]{KRIPO}).

\medskip\noindent(5) Similarly, the map
$$
\psi_{d-1}:S^{d-1}\to\RR^{d^2+2d},\quad v\mapsto(v\otimes v,v,v),
$$
has the following extremal property: for any system of points
$v_1,\ldots,v_n\in S^{d-1}$, their $\psi_{d-1}$-images are in convex
and neighborly position. This observation is, in a sense, weaker than (4) because $\conv(\Im\psi_{d-1})\subsetneqq (B_d)^{\otimes^2}$. (For instance, $(v\otimes w,v,w)\in(B_d)^{\otimes^2}\setminus\conv(\Im\psi_{d-1})$ for $v,w\in S^{d-1}$, $v\not=w$.) On the other hand, it yields the interesting embedding
$$
\psi:S^1\to\RR^4,\quad(\cos t,\sin t)\mapsto(\cos t,\sin t,\cos2t,\sin2t),
$$
which maps any number of points on the unit circle into a convex neighborly point configuration in $\RR^4$. In fact, $\psi$ is obtained from $\psi_1$ by the following series of affine transformations of $\RR^8$
\begin{align*}
&(\cos t,\sin t)\mapsto(\cos^2t,\sin^2t,\cos t\sin t,\cos t\sin t,\cos t,\sin t,\cos t,\sin t)\mapsto\\
&(\cos t,\sin t,\cos^2t,\sin^2t,\cos t\sin t)\mapsto(\cos t,\sin t,\cos^2t,1-\cos^2t,\sin2t)\mapsto\\
&(\cos t,\sin t,\cos^2t,\sin2t)\mapsto(\cos t,\sin t,\cos2t,\sin2t),
\end{align*}
implying an affine isomorphism $\conv(\Im\psi_1)\cong\conv(\Im\psi)$.
\end{example}

\section{Vertex factorizations}\label{vertices}

Let $A$ and $B$ be convex sets. A family of maps $\big(f_t\big)_{(-1,1)}\subset\Hom(A,B)$
is called an \emph{affine 1-family} if the map
$$
(-1,1)\to\Hom(A,B),\quad t\mapsto f_t,
$$
is injective and affine.

A map $f:P\to Q$ in $\Pol$ is called a \emph{vertex map}, or just a \emph{vertex}, if $f\in\vertex(\Hom(P,Q))$.

We will need the following obvious \emph{(affine) perturbation criteria} for vertices and interior points. Let $f:P\to Q$ be an affine map between two polytopes. Then:
\begin{enumerate}
\item[$(\pc_1)$]
$f$ is not a vertex if and only if there is an affine 1-family
$\big(f_t\big)_{(-1,1)}\subset\Hom(P,Q)$, with  $f_0=f$.
\item[$(\pc_2)$]
$f\in\int(\Hom(P,Q))$ if and only if for any affine 1-family $\big(f_t\big)_{(-1,1)}\subset\aff(P,Q)$ with $f_0=f$ there exists a real number $\varepsilon>0$ such that $\big(f_t\big)_{(-\varepsilon,\varepsilon)}\subset\Hom(P,Q)$
\end{enumerate}

Every map $f$ in $\Pol$ factors, uniquely up to the obvious equivalence, into a surjective and an injective map in $\Pol$: $f=f_{\inj}\circ f_{\surj}$.

A map $f:P\to Q$ is called a \emph{deflation} if it satisfies the conditions:
\begin{itemize}
\item[(i)] $f$ is surjective,
\item[(ii)] $f$ is a vertex,
\item[(iii)] for any vertex $v\in P$ either $f(v)\in\vertex(Q)$ or $f(v)\in\int(Q)$.
\end{itemize}

Simple examples of deflations are provided by the parallel projections
$\Box_n\to\Box_m$ along the subspace $\RR_+e_{m+1}+\cdots+\RR_+e_n\subset\RR^n$, where we assume $m<n$.

The \emph{rank} of a map $f$ in $\Pol$ is defined by $\rank f=\dim\Im(f)$.

\begin{theorem}\label{vertexcomposite}
Let $f:P\to Q$ be a map in $\Pol$.
\begin{enumerate}
\item
If $f$ is a vertex then so are $f_{\inj}$ and $f_{\surj}$.
\item If $\vertex(f(P))\subset\vertex(Q)$ and $f_{\surj}$ is a deflation then $f$ is also a vertex map.
\item There are examples of non-vertex maps $f$ with $f_{\inj}$ a vertex map and $f_{\surj}$ a deflation. In particular, the converse to the implication in (1) is not true.
\end{enumerate}
\end{theorem}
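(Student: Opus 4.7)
The three parts call for different ingredients; I will address them in order. Part~(1) is a direct application of the perturbation criterion $(\pc_1)$. If $f_{\inj}$ is not a vertex, pick an affine $1$-family $(g_t)_{t\in(-1,1)}\subset\Hom(f(P),Q)$ with $g_0=f_{\inj}$; then $(g_t\circ f_{\surj})_{t\in(-1,1)}\subset\Hom(P,Q)$ passes through $f$, and the surjectivity of $f_{\surj}$ makes $t\mapsto g_t\circ f_{\surj}$ injective, contradicting that $f$ is a vertex. Symmetrically, a nontrivial $1$-family $(h_t)$ through $f_{\surj}$ in $\Hom(P,f(P))$ lifts via $(f_{\inj}\circ h_t)$ to a nontrivial $1$-family through $f$, the injectivity of $f_{\inj}$ playing the analogous role.

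For part~(2), I would argue by contradiction using $(\pc_1)$. Suppose $(f_t)_{t\in(-1,1)}\subset\Hom(P,Q)$ is a nontrivial affine $1$-family through $f$, and write $f_t=f+t\,h$. Set
\[
V=\big\{\,v\in\vertex(P)\ :\ f_{\surj}(v)\in\vertex(f(P))\,\big\}.
\]
The hypothesis $\vertex(f(P))\subset\vertex(Q)$, together with the observation that relative interior points of $f(P)$ cannot be extreme in $Q$ (for $\dim f(P)\ge 1$; if $\dim f(P)=0$ the statement is immediate), gives $V=\{v\in\vertex(P):f(v)\in\vertex(Q)\}$. Standard pointedness of the tangent cone to $Q$ at vertices of $Q$ then forces $h(v)=0$ for every $v\in V$. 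The main obstacle---and the heart of the argument---is the following.

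\begin{quote}
\emph{Key lemma.} If $f_{\surj}$ is a deflation, then $\aff(V)=\aff(P)$.
\end{quote}

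Granting the lemma, the affine map $h$ vanishes on an affine spanning set of $\aff(P)$, so $h\equiv 0$, contradicting the nontriviality. I would prove the lemma contrapositively. Assume $w\in\vertex(P)\setminus\aff(V)$; deflation condition~(iii) forces $f_{\surj}(w)\in\int(f(P))$. From $V$ choose an affine basis $V_0$ of $\aff(V)$; then $V_0\cup\{w\}$ is affinely independent, and one extends it (if needed) by further vertices of $P$ to an affine basis $B$ of $\aff(P)$. Define an affine map $h_{\surj}:\aff(P)\to\lin(\aff f(P))$ by sending $w$ to a nonzero vector $\vec u\in\lin(\aff f(P))$ and every other element of $B$ to $0$; by affineness $h_{\surj}$ vanishes on all of $V$. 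For $\vec u$ sufficiently small, $(f_{\surj}+t\,h_{\surj})_{t\in(-1,1)}\subset\Hom(P,f(P))$: at vertices in $V$ the value stays in $\vertex(f(P))$, and at the remaining vertices (which by~(iii) map into $\int f(P)$) the perturbation remains within $f(P)$. This is a nontrivial $1$-family through $f_{\surj}$, contradicting that $f_{\surj}$ is a vertex of $\Hom(P,f(P))$.

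For part~(3) an explicit example suffices. Take $P=\Delta_3$, $Q=P_6$ (the regular hexagon), and let $T\subset P_6$ be a trapezoid whose four vertices are two opposite vertices of $P_6$ together with two points lying in the relative interiors of two non-adjacent edges of $P_6$. Let $f_{\surj}:\Delta_3\to T$ be the affine surjection sending the four vertices of $\Delta_3$ bijectively to $\vertex(T)$; since $\Hom(\Delta_3,T)\cong T^4$ by Corollary~\ref{explicitcomputations}(1), $f_{\surj}$ is a vertex, and condition~(iii) is immediate. A short calculation shows that $f_{\inj}:T\hookrightarrow P_6$ is also a vertex: the $6$ parameters of an affine self-map of $\RR^2$ are matched by $2+2+1+1=6$ linearly independent scalar constraints from the vertex- and edge-memberships of the four vertex images. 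Finally, $\Hom(\Delta_3,P_6)\cong P_6^4$, and the $4$-tuple representing $f=f_{\inj}\circ f_{\surj}$ contains two entries outside $\vertex(P_6)$, so $f$ is not a vertex.
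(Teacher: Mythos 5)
Parts (1) and (2) of your argument are correct. Part (1) coincides with the paper's proof. Part (2) takes a genuinely different route: the paper shows that the perturbation family $(f_t)$ is constant on \emph{every} vertex of $P$, handling the vertices that land in $\int(f(P))$ by projecting $Q$ onto $\aff(\Im f)$ along a suitably chosen complement and invoking $(\pc_2)$ together with the vertex property of $f_{\surj}$. You instead isolate a \emph{Key lemma} -- that for a deflation the vertices of $P$ mapping to $\vertex(f(P))$ already affinely span $\aff(P)$ -- which makes the interior-mapping vertices irrelevant: $h$ vanishes on an affine spanning set and hence identically. Your lemma is essentially condition (ii) of the face-collapse system extracted from a deflation in the paper's proof of Theorem \ref{fGamma}, and your proof of it (perturb $f_{\surj}$ in a direction vanishing on $\aff(V)$ but not at a vertex $w\notin\aff(V)$, using (iii) to keep the images of the remaining vertices inside $f(P)$) parallels the argument given there. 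The trade-off: your version is more self-contained and avoids the projection/$(\pc_2)$ step, at the cost of front-loading a statement the paper only establishes later.

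Part (3) has a gap. The crux of your example is the claim that the $2+2+1+1=6$ scalar constraints on the affine perturbation $h$ of $f_{\inj}$ are linearly independent, and this is exactly what your hypotheses do not guarantee. Since $h$ must vanish at the two opposite vertices $\pm1$ of $P_6$, it has the form $h(x,y)=y\,\vec w$, and the two remaining constraints say that $\vec w$ is parallel to each of the two edges of $P_6$ containing $a$ and $b$. These force $\vec w=0$ only if those two edges are \emph{non-parallel}. But ``non-adjacent'' edges of a hexagon include \emph{opposite} edges, which are parallel; in that configuration (which is precisely the one where $T$ is a trapezoid with $[1,a]$ and $[-1,b]$ along opposite edges) there is a one-parameter family sliding $a$ and $b$ in opposite senses along the two parallel edges while fixing $\pm1$, so $f_{\inj}$ is \emph{not} a vertex and the example collapses. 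The fix is easy -- take $a$ and $b$ on two non-parallel, non-adjacent edges (e.g.\ $[\zeta_6^0,\zeta_6^1]$ and $[\zeta_6^2,\zeta_6^3]$) and carry out the two-line independence check above -- but as written the decisive verification is asserted rather than proved, and the stated conditions admit counterexamples. For comparison, the paper's example is built from a $2n$-gon cross-section of a $3$-polytope with $2n\ge6$ and a sliding argument, and its footnote makes exactly this point: the quadrilateral case of such inscriptions can fail to be a vertex.
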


\begin{proof} We can assume $f_{\surj}:P\to f(P)$.

(1) (Using $(\pc_1)$.) If $f_{\inj}:f(P)\to Q$ is not a vertex then there exists an affine 1-family $(\psi_t)_{(-1,1)}\subset\Hom(f(P),Q)$ with $\psi_0=f_{\inj}$. Then the resulting affine 1-family $\big(\psi_t\circ f_{\surj}\big)_{(-1,1)}\subset\Hom(P,Q)$ has $\psi_0\circ f_{\surj}=f$, a contradiction.

If $f_{\surj}$ is not vertex then there is an affine 1-family $(\phi_t)_{(-1,1)}\subset\Hom(P,f(P))$ with $\phi_0=f_{\surj}$. So we get the affine 1-family $\big(f_{\inj}\circ\phi_t\big)_{(-1,1)}\subset\Hom(P,Q)$ with $f_{\inj}\circ\phi_0=f$, again a contradiction.

\medskip\noindent(2) Observe that, in view of $(\pc_1)$, the
condition $\vertex(f(P))\subset\vertex(Q)$ already implies that $f_{\inj}$ is a vertex map. But we do not use this explicitly.

Assume to the contrary that $f$ is not a vertex. By $(\pc_1)$, there is an affine 1-family $(f_t)_{(-1,1)}\subset\Hom(P,Q)$ with $f_0=f$.

We will show that the maps
$$
\phi_v:(-1,1)\to Q,\qquad t\mapsto f_t(v),
$$
are constant for \emph{all} $v\in\vertex(P)$. Since this implies that the map
$$
(-1,1)\to\Hom(P,Q),\qquad t\mapsto f_t,
$$
is constant, we have the desired contradiction.

The maps
$$
\phi_v:(-1,1)\to Q,\qquad t\mapsto f_t(v),
$$
are constant for the vertices $v\in\vertex(P)$ with $f(v)\in\vertex(Q)$.

Assume $v_1,\ldots,v_k$ are the vertices of $P$ for which the maps
$$
\phi_{v_1},\ldots,\phi_{v_k}:(-1,1)\to Q,\qquad t\mapsto f_t(v_k),
$$
are not constant. By the hypothesis, $f(v_1),\ldots,f(v_k)\in\int(f(P))$.

Fix a $(\dim Q-\rank f)$-dimensional affine subspace $H\subset\aff(Q)$ that is parallel to none of the intervals
$$
\bigcup_{t\in(-1,1)}f_t(v_i)\subset Q,\quad i=1,\ldots,k,
$$
and consider the parallel projection $\pi:Q\to\aff(\Im(f))$ along $H$. We get an affine family $(\pi\circ f_t)_{(-1,1)}\subset=\aff(P,\Im(f))$ with $\pi\circ f_0=f_{\surj}$. By $(\pc_2)$, there exists a real number $\varepsilon>0$ which defines the affine family $(\pi\circ f_t)_{(-\varepsilon,\varepsilon)}\subset\Hom(P,\Im(f))$, contradicting the assumption that $f_{\surj}$ is vertex.

\medskip\noindent(3) Examples of non-vertex maps $f$ for which $f_{\inj}$ is a vertex and
$f_{\surj}$ is a deflation will be presented in Example \ref{nonvertexfactorization}.
\end{proof}

Let $P$ be a polytope and $0\le r<\dim P$. Consider a family $\Gamma=\{G_1,\ldots,G_k\}$, satisfying the conditions:
\begin{itemize}
\item[(i)]
$G_i\subset P$ is a face and $\dim G_i>0$, $i=1,\ldots,k$,
\item[(ii)]
$\codim(\lin(G_1)+\cdots+\lin(G_k))=r$,
\item[(iii)]
$(G_i+\lin(G_1)+\cdots+\lin(G_k))\cap P=G_i$, $i=1,\ldots,k$,
\item[(iv)] the family of faces $\{G_1,\ldots,G_k\}$ is maximal w.r.t inclusion among the families satisfying the first three conditions.
\end{itemize}
The linear hulls above are taken in the ambient vector space and the codimension is understood relative to $\lin(P)$.

To $\Gamma$ we associate a surjective affine map of rank $r$ from $P$ as follows. We can assume $0\in P$. Consider a linear map $f:\lin(P)\to E$ with $\dim E=r$ and $\ker(f)=\lin(G_1)+\cdots+\lin(G_k)$. For the restriction $f|_P$ we will use the notation $f_\Gamma$.

A map $P\to Q$ in $\Pol$ is called a \emph{face-collapse of rank $r$}
if it is of the type $f_{\Gamma}$ up to isomorphism in the comma category $P\downarrow\Pol$.

\medskip The following example shows that neither of the conditions (iii) and (iv) above can be dropped in the definition of face-collapses. Let $P_6$ be a regular hexagon with vertices $v_1,\ldots,v_6$. Then the pair of facets $\Gamma=\{[v_1,v_2]\times[0,1],\ [v_4,v_5]\times[0,1]\}$ of the unit prism $P_6\times[0,1]$ satisfies (i-iv). On the one hand, if one drops (iii), then $\Gamma$ ceases to be maximal. On the other hand, if one drops (iv), then $\Gamma'=\{[v_1,v_2]\times[0,1]\}$ satisfies (i,ii,iii), but one cannot collapse the facet $[v_1,v_2]\times[0,1]$ into a point by an affine map without also collapsing the facet $[v_4,v_5]\times[0,1]$:

\medskip

\begin{figure}
\tikzstyle{vertex}=[circle,draw,fill,inner sep=2pt]
\begin{tikzpicture}[scale=0.5]
\node at (0,0)[vertex,label=below:$v_6$]{};
\node at (0.8,3.3)[vertex,label=right:$v_5$]{};
\node at (4,0.5)[vertex,label=below:$v_1$]{};
\node at (7.2,2.9)[vertex,label=below:$v_2$]{};
\node at (0,7)[vertex]{};
\node at (-.5,3.6)[vertex]{};
\node at (3.6,4.4)[vertex]{};
\node at (4.7,6.1)[vertex,label=above right:$v_4$]{};
\node at (4.2,10.4)[vertex]{};
\node at (6.9,7.2)[vertex]{};
\node at (7.6,10.7)[vertex]{};
\node at (7.9,6.4)[vertex,label=right:$v_3$]{};
\draw[fill=lightgray] (0,7) -- (4.2,10.4) -- (4.7,6.1) -- (0.8,3.3) -- (0,7);
\draw[fill=lightgray] (4,0.5) -- (3.6,4.4) -- (6.9,7.2) -- (7.2,2.9) -- (4,0.5);
\draw (0,0) -- (4,0.5);
\draw (0,0) -- (-0.5, 3.6);
\draw (-0.5,3.6) -- (0,7);
\draw (4.2, 10.4) -- (7.6, 10.7);
\draw (7.2, 2.9) -- (7.9, 6.4);
\draw (7.9, 6.4) -- (7.6, 10.7);
\draw (7.6, 10.7) -- (6.9, 7.2);
\draw (3.6, 4.4) --  (-0.5, 3.6);
\draw[dashed] (0,0) -- (0.8, 3.3);
\draw[dashed] (4.7, 6.1) -- (7.9, 6.4);
\end{tikzpicture}
\caption{A polytope illustrating the definition of face-collapse}
\end{figure}
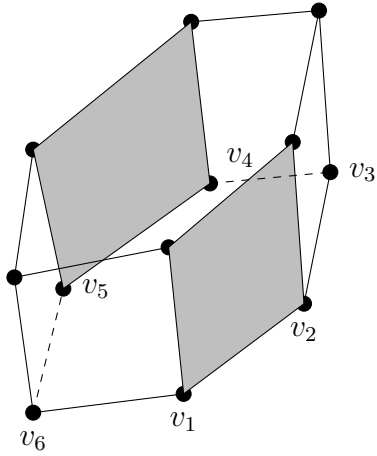

\begin{theorem}\label{fGamma} We have the following proper containments:
\begin{itemize}
\item[]Deflations $\subsetneqq$ Face-Collapses $\subsetneqq$ Surjective Vertex Maps.
\end{itemize}
\end{theorem}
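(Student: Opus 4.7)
My plan splits the theorem into two inclusions plus two properness witnesses.

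For Face-Collapses $\subseteq$ Surjective Vertex Maps I apply $(\pc_1)$. Given $f_\Gamma:P\to Q$ with $Q=f_\Gamma(P)$, assume an affine $1$-family $f_t=f_\Gamma+t h$ in $\Hom(P,Q)$ with $f_0=f_\Gamma$, and show $h\equiv 0$. The first step is that each $w_i:=f_\Gamma(G_i)$ is a vertex of $Q$: otherwise $w_i$ lies in the relative interior of a positive-dimensional face $F\subseteq Q$, so I can lift a segment in $F$ through $w_i$ to points $a,b\in P$ whose midpoint maps to $w_i$ and therefore lies in $G_i$ by condition~(iii); the face property of $G_i$ would then force $a,b\in G_i$, contradicting $f_\Gamma(a),f_\Gamma(b)\ne w_i$. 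Since tangent cones at vertices of $Q$ are pointed, requiring $w_i+t h(x)\in Q$ for $x\in G_i$ and $t\in(-1,1)$ forces $h$ to vanish identically on each $G_i$. Hence the linear part of $h$ kills $\sum\lin(G_i)=\ker f_\Gamma$, so $h$ factors as $h=\bar h\circ f_\Gamma$ for an affine $\bar h:Q\to E$ with $\bar h(w_i)=0$. The constraint $f_t(P)\subseteq Q$ becomes $(\mathrm{id}_Q+t\bar h)(Q)\subseteq Q$, and applying the same pointed-tangent-cone argument at every $w\in\vertex(Q)$ gives $\bar h(w)=0$. Since vertices affinely span $\aff(Q)$, $\bar h\equiv 0$ and thus $h\equiv 0$, so $f_\Gamma$ is a vertex.

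For Deflations $\subseteq$ Face-Collapses, given a deflation $f:P\to Q$ of rank $r<\dim P$, the natural candidate is $\Gamma=\{f^{-1}(w)\cap P\colon w\in\vertex(Q),\ \dim(f^{-1}(w)\cap P)\ge 1\}$. Each element is a face of $P$ (preimage of a face), covering (i) and, since it is a full fiber, (iii). Maximality (iv) follows by an argument symmetric to step~1 above: any positive-dimensional face $G'\subseteq P$ with $\lin G'\subseteq\ker f$ and $(G'+\ker f)\cap P=G'$ is itself a full fiber over a vertex of $Q$ and so lies in $\Gamma$. The crucial condition is (ii), $\sum_i\lin(G_i)=\ker f$. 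Here I use the vertex property of the deflation via $(\pc_1)$: it forces vertices of $P$ whose image lies in $\vertex(Q)$ to affinely span $\aff P$, and I leverage this spanning to decompose an arbitrary element of $\ker f$ as a sum of differences of vertices sharing a common vertex-fiber in $P$.

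For the two witnesses of strictness I plan either to adapt constructions from Section~\ref{examples} or to build them explicitly: (a) a face-collapse that is not a deflation, where $f_\Gamma$ sends some vertex of $P$ to the relative interior of a positive-dimensional proper face of $Q$; and (b) a surjective vertex map whose kernel cannot be realized as the linear span of any system of positive-dimensional faces of $P$, hence cannot be an $f_\Gamma$.

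The main obstacle is condition~(ii) in the deflation-to-face-collapse direction, namely bridging the analytic vertex criterion $(\pc_1)$ to the purely combinatorial statement that $\ker f$ is generated by the linear hulls of the vertex fibers; the face-collapse-to-vertex-map direction, by contrast, is clean once the images of the $G_i$ are pinned down as vertices of $Q$.
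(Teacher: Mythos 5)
Your argument that face-collapses are surjective vertex maps is correct and follows essentially the paper's route: use condition (iii) together with the face property of $G_i$ to pin each $f_\Gamma(G_i)$ down as a vertex of $Q$, conclude that any perturbation vanishes on the $G_i$, and then propagate. Your factorization $h=\bar h\circ f_\Gamma$ is a slightly cleaner finish than the paper's (which instead picks preimage vertices $v_{k+1},\dots,v_{k+l}$ of the remaining vertices of $f(P)$ and invokes full-dimensionality of $\conv(G_1,\dots,G_k,v_{k+1},\dots,v_{k+l})$), but the content is the same. Deferring the two properness witnesses to explicit low-dimensional constructions is also exactly what the paper does; its Examples \ref{propercollapses} and \ref{propersurjvertices} have precisely the two shapes you describe.

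The genuine gap is where you yourself flagged it: condition (ii) in the deflation-to-face-collapse direction. The perturbation argument (fix the affine hull of the vertices of $P$ lying over $\vertex(Q)$ and wiggle transversally, using deflation condition (iii) to keep the remaining vertices inside $\int(f(P))$) does establish your claim (a), that those vertices affinely span $\aff(P)$. But (a) does not yield the decomposition in your step (b). Concretely, let $P=\conv\bigl((0,0,0),(1,0,1),(0,1,1),(1,1,0)\bigr)$, the simplex inscribed in the unit cube, and let $f$ be the projection onto the first two coordinates. Then $f(P)$ is the unit square, each of the four vertices of $P$ maps to a distinct vertex of the square (so condition (iii) of a deflation holds), and since these four vertices are affinely independent, $f$ is a vertex map by the observation opening Example \ref{nonvertexfactorization}; yet every vertex fiber is a single point and no edge of $P$ is parallel to $\ker f$, so $\sum_i\lin(G_i)=0\neq\ker f$. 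Hence no element of $\ker f$ is a sum of differences of vertices sharing a common vertex-fiber, and no argument starting only from the spanning property can close this step. (The same example tests the corresponding step of the paper's own proof, where it is asserted that $\codim\bigl(\lin(G_1)+\cdots+\lin(G_k)\bigr)>r$ forces $\dim\conv(G_1,\dots,G_k,v_{k+1},\dots,v_{k+l})<\dim P$; here that implication fails. So you have located the crux correctly, but the bridge you propose is genuinely missing rather than merely unfinished, and any repair must use more about deflations than conditions (i)--(iii) as literally stated.)
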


\begin{proof}
First we show that face-collapses are vertex maps.

Let $P\in\Pol$ and $\Gamma=\{G_1,\ldots,G_k\}$ be a system of faces of $P$ satisfying the condition (i--iv) above.
Without loss of generality we can assume $0\in P$. Denote $H=\lin(G_1)+\cdots+\lin(G_k)$.

First we observe that $f(G_i)\in\vertex(f(P))$ for every
$i=1,\ldots,k$. In fact, $f(G_i)\subset\partial f(P)$ by (iii) and
$\dim f(G_i)=0$. But then, if $f(G_i)\in\int(\gamma)$ for some
positive dimensional face $\gamma\subset f(P)$, we have
$\int(G_i)\subset\int\big(f_\Gamma^{-1}(\gamma)\big)$ and
$f_\Gamma^{-1}(\gamma)$ is a face of $P$ with $G_i\subsetneqq f_\Gamma^{-1}(\gamma)$. This is impossible because $G_i\subset P$ is a face.

Suppose $f_\Gamma$ is not a vertex. By $(\pc_1)$, there exists an affine 1-family $(f_t)_{(-1,1)}\subset\Hom(P,f(P))$ with $f_0=f_\Gamma$. The condition $f(G_i)\in\vertex(f(P))$ forces $f_t|_{G_i}=f|_{G_i}$ for all $t\in(-1,1)$ and $i=1,\ldots,k$. In other words, the 1-family $(f_t)_{(-1,1)}$ is constant on every $G_i$. Let $\{w_{k+1},\ldots,w_{k+l}\}=\vertex(f(P))\setminus\{f(G_1),\ldots,f(G_k)\}$. Fix $v_{k+1},\ldots,v_{k+l}\in\vertex(P)$ with $f(v_j)=w_j$, $j=k+1,\ldots,k+l$. (The $v_j$ are uniquely determined, but we do not need this.) Using again $(\pc_1)$, the 1-family $(f_t)_{(-1,1)}$ is constant on the $v_j$. So $(f_t)_{(-1,1)}$ is constant on $\conv(G_1,\ldots,G_k,v_{k+1},\ldots,v_{k+l})$. But, by (ii), the latter is a full dimensional subpolytope of $P$ and that forces our 1-family to be constant on $P$ -- a contradiction.

\medskip Next we show that deflations are face-collapses.

Consider a rank $r$ deflation $f\in\vertex\big(\Hom(P,f(P)\big)$. We
can assume that $0\in P$ and that $f(0)=0$; i.e., $f$ is the restriction of a (unique) linear map $h:\lin(P)\to\lin(f(P))$. Let $w_1,\ldots,w_k\in f(P)$ be the vertices with $\dim f^{-1}(w_i)>0$ for all $i=1,\ldots,k$. It is enough to show the following

\medskip\noindent\emph{Claim.} The faces $G_i=f^{-1}(w_i)\subset P$, $i=1,\ldots,k,$ form a system satisfying the conditions (i--iv) with $\ker(h)=\lin(G_1)+\cdots+\lin(G_k)$.

\medskip The conditions (i,iii,iv) are straightforward. Since $\lin(G_1)+\cdots+\lin(G_k)\subset\ker(h)$, we only need to verify (ii).

Assume to the contrary $\codim\big(\lin(G_1)+\cdots+\lin(G_k)\big)>r$. Let $\{w_{k+1},\ldots,w_{k+l}\}=\vertex(f(P))\setminus\{w_1,\ldots,w_k\}$. There are (uniquely determined) vertices $v_{k+1},\ldots,v_{k+l}\in P$ with $f(v_j)=w_j$ for $j=k+1,\ldots,k+l$. Our assumption implies
$$
\dim\conv(G_1,\ldots,G_k,v_{k+1},\ldots,v_{k+l})<\dim P.
$$
Without loss of generality we can additionally assume
$$
0\in\conv(G_1,\ldots,G_k,v_{k+1},\ldots,v_{k+l}).
$$
Pick a basis $B\subset\lin(P)$, restricting to a basis
$B_0\subset\lin(G_1,\ldots,G_k,v_{k+1},\ldots,v_{k+l})$, and a basis
$B'\subset\lin(f(P))$. Let $M$ be the matrix of $h$ with respect to the bases $B$ and $B'$ so that the first $\#(B_0)$ rows correspond to the elements of $B_0$. Consider the affine 1-family
$(h_t)_{(-1,1)}\subset\Hom(\lin(P),\lin(f(P)))$, where the matrix of $h_t$ in the bases $B$ and $B'$ is obtained from $M$ by adding $(t,\ldots,t)\in\RR^{\#B'}$ to each of the last $\#(B\setminus B_0)$ rows. Then, because $f$ is a deflation, there exists $0<\varepsilon<1$ such that $h_t(P)\subset f(P)$ for all $-\varepsilon<t<\varepsilon$ and this produces an affine 1-family in $\Hom(P,f(P))$, containing $f$ -- a contradiction by $(\pc_1)$.

\medskip Examples \ref{propercollapses} and \ref{propersurjvertices} in the next section will show that both inclusions in Theorem \ref{fGamma} are proper inclusions.
\end{proof}

\begin{corollary}\label{rank1}
\begin{enumerate}
\item A rank $1$ surjective map is a vertex if and only if it is a face-collapse if and only if it is a deflation.
\item Every system $\Gamma$ of faces of $P$ satisfying \emph{(i-iv)} and $\rank(f_\Gamma)=1$ has at most two elements. If $P$ is centrally symmetric then such systems $\Gamma$ are exactly the pairs of opposite facets of $P$.
\item A rank one map $f$ is a vertex iff $f_{\surj}$ and $f_{\inj}$ are vertices.
\item For a polygon $P$ with $l$ edges, of which $m$ pairs are parallel, and a polytope $Q$ with $n$ vertices, the number of rank $1$ vertex maps $P\to Q$ is
$(l-m)n(n-1)$.
\end{enumerate}
\end{corollary}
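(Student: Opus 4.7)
I do the four parts in order; the first three are organizational consequences of the preceding two theorems, while (4) is a counting exercise on polygons.

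For (1), the key observation is that a rank $1$ surjective map has image a segment, every point of which is either a vertex (endpoint) or in the relative interior. Hence the third defining condition of a deflation is vacuous for any rank $1$ surjective vertex map, forcing the rank $1$ surjective vertex maps and rank $1$ deflations to coincide. Combined with the containment chain of Theorem \ref{fGamma}, all three classes agree.

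For (2), I first exploit condition (iii) to show that distinct $G_i$'s land at distinct vertices of $f_\Gamma(P)$: if $g\in G_i$ and $x\in P$ satisfies $f_\Gamma(x)=f_\Gamma(g)$, then $x-g\in\ker f_\Gamma=H$, so $x\in(G_i+H)\cap P=G_i$; combined with $f_\Gamma(G_i)\in\vertex(f_\Gamma(P))$ from the proof of Theorem \ref{fGamma}, this yields $|\Gamma|\le|\vertex(f_\Gamma(P))|=2$. For centrally symmetric $P$ about $0$, maximality excludes $|\Gamma|=1$: if $\Gamma=\{G\}$ then (ii) forces $G$ to be a facet; the opposite facet $-G$ satisfies $\lin(-G)=\lin(G)$ and is distinct from $G$ (a facet of a centrally symmetric polytope lies on a supporting hyperplane not through the origin), and $\{G,-G\}$ still satisfies (i)--(iii), contradicting (iv). Hence $|\Gamma|=2$, and central symmetry together with (ii) then forces $G_2=-G_1$ with both facets.

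Part (3) follows from the previous parts plus Theorem \ref{vertexcomposite}. The forward implication is Theorem \ref{vertexcomposite}(1); for the converse, $f_{\inj}$ being a vertex of $\Hom([a,b],Q)\cong Q^2$ (Corollary \ref{explicitcomputations}(1)) forces $a,b\in\vertex(Q)$, which supplies the hypothesis $\vertex(f(P))\subset\vertex(Q)$ of Theorem \ref{vertexcomposite}(2), while $f_{\surj}$ is a deflation by (1).

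For (4), by (3) every rank $1$ vertex map $P\to Q$ is uniquely determined by a rank $1$ face-collapse $f_\Gamma$ of $P$ together with an ordered pair $(a,b)$ of distinct vertices of $Q$ (the ordered pair specifies both the target chord and the orientation of the factorization, resolving the ambiguity of the abstract $1$-dimensional target). For a polygon $P$, any $G_i$ in a rank $1$ face-collapse must be an edge whose direction spans the $1$-dimensional kernel; since a polygon has at most two edges in any given direction, (iv) forces $\Gamma$ to be either $\{e\}$ for an edge $e$ with no parallel partner or $\{e,e'\}$ for a parallel pair. This produces $(l-2m)+m=l-m$ face-collapses; multiplying by $n(n-1)$ gives the formula. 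The main subtlety is precisely this orientation bookkeeping, but once the bijection is set up it is immediate.
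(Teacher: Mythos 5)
Your argument is correct and follows essentially the paper's own route: part (1) from the containment chain of Theorem \ref{fGamma} together with the fact that a segment's boundary consists only of its vertices, part (2) by identifying the $G_i$ with fibers over the (at most two) vertices of the image segment, part (3) from Theorem \ref{vertexcomposite}(1,2) once the injective vertex maps out of a segment are characterized, and part (4) by counting the $(l-m)$ rank-one face-collapses against the $n(n-1)$ ordered pairs of distinct target vertices. If anything, your treatment of (2) and of the bijection in (4) is more detailed than the paper's, which dismisses the centrally symmetric case as obvious.
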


\begin{proof}
(1) holds true because of the first inclusion in Theorem \ref{fGamma} and the fact that the boundary of a segment is just the vertices of the segment .

\medskip\noindent(2) If $\{u,v\}=\vertex f_\Gamma(P)$ and $\dim
f^{-1}(u),\dim^{-1}(v)>0$ then $\Gamma=\{f^{-1}(u),f^{-1}(v)\}$;
otherwise $\Gamma$ consists of a single facet of $P$. The conclusion for centrally symmetric polytopes is obvious.

\medskip\noindent(3) From $(\pc_1)$, the injective vertex maps from a
segment to $Q$ are the maps that send the two vertices of the segment to
distinct vertices of $Q$. So the claim follows from (1) and Theorem
\ref{vertexcomposite}(1,2).

\medskip\noindent (4) In view of (1) and (2), $(l-m)$ is the number of
surjective vertex maps from $P$ to a segment. The count follows from (3) and the observation above on injective vertex maps from a segment.
\end{proof}

\section{Examples of vertex maps}\label{examples}

In this section we present various constructions of vertex maps exhibiting interesting phenomena, some of them mentioned in the previous sections.

\begin{example}[\emph{Vertex factorization of non-vertex maps}]\label{nonvertexfactorization}
For a map $f:P\to Q$ in $\Pol$, if $1+\dim P$ affinely independent
vertices of $P$ map to vertices of $Q$ then $f$ is a vertex. In fact,
if there were an affine 1-family $(f_t)_{(-1,1)}\subset\Hom(P,Q)$ with
$f_0=f$, then the family would be constant on the distinguished
vertices of $P$. The affine hull of these vertices is all of
$\aff(P)$. Consequently, $(f_t)_{(-1,1)}$ must be constant on $P$ -- a
contradiction. This simple observation, together with Theorem
\ref{vertexcomposite}(2), suggests that to look for non-vertex maps
$f:P\to Q$ with $f_{\surj}$ and $f_{\inj}$ both vertices, we should
consider maps that keep the vertices of $\Im f$ away from
$\vertex(Q)$. We first give a general construction and then construct examples of vertex factorizations of non-vertex maps.

Let $Q\subset\RR^3$ be a 3-polytope and $H\subset\RR^3$ be an affine plane such that: (i) the intersection $P=Q\cap H$ is a polygon with $2n\ge6$ vertices, and (ii) there exist $w_1,\ldots,w_{2n}\in\vertex(Q)$ such that every open interval $(w_i,w_{i+1})$ contains exactly one vertex $v_i\in\vertex(P)$, the indexing being mod $2n$. Observe that the segments $[w_i,w_{i+1}]$ are necessarily edges of $Q$.

\medskip\noindent\emph{Claim}. The identity embedding $\iota:P\to Q$ is a vertex of $\Hom(P,Q)$.

\medskip
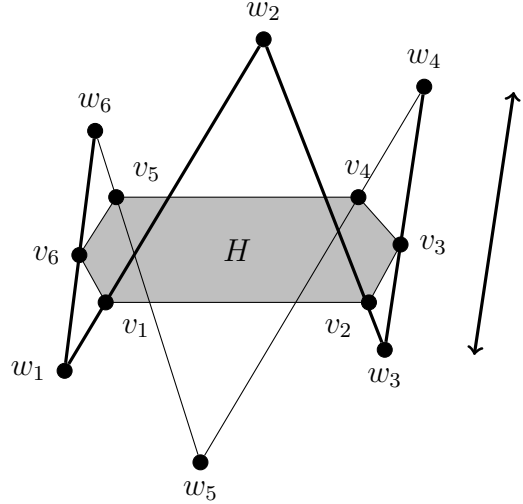
\begin{figure}
\tikzstyle{vertex}=[circle,draw,fill,inner sep=2pt]
\begin{tikzpicture}[scale=0.7]
\draw[fill=lightgray] (0,0) -- (5,0) -- (5.6,1.1) -- (4.8,2) -- (0.2,2) -- (-0.5,0.9) -- (0,0);
\node at (0,0)[vertex,label=below right:$v_1$](v1){};
\node at (5,0)[vertex,label=below left:$v_2$](v2){};
\node at (5.6,1.1)[vertex,label=right:$v_3$](v3){};
\node at (4.8,2)[vertex,label=above:$v_4$](v4){};
\node at (0.2,2)[vertex,label=above right:$v_5$](v5){};
\node at (-0.5,0.9)[vertex,label=left:$v_6$](v6){};
\node at (-0.78,-1.3)[vertex,label=left:$w_1$](w1){};
\node at (3,5)[vertex,label=above:$w_2$](w2){};
\node at (5.3,-0.9)[vertex,label=below:$w_3$](w3){};
\node at (6.05,4.1)[vertex,label=above:$w_4$](w4){};
\node at (1.8,-3.04)[vertex,label=below:$w_5$](w5){};
\node at (-0.2,3.26)[vertex,label=above:$w_6$](w6){};
\draw[very thick] (w6) -- (w1) -- (w2) -- (w3) -- (w4);
\draw (w4) -- (w5) -- (w6);
\draw[<->,very thick](7,-1) -- (7.75,4);
\draw node at (2.5,1){$H$};
\end{tikzpicture}
\caption{The construction of Example \ref{nonvertexfactorization} with
$n=3$}
\end{figure}

\medskip\noindent The two groups of vertices $w_1,w_3,\ldots,w_{2n-1}$
and $w_2,w_4,\ldots,w_{2n}$ are separated by $H$. To see that $\iota$
is a vertex,  we apply the following \emph{sliding argument}. Assume
$\iota\notin\vertex(\Hom(P,Q))$. Pick a vertex of
$\sigma\in\Hom(P,Q)$, which belongs to the same minimal face of
$\Hom(P,Q)$ as $\iota$. By sliding $\iota$ along the segment
$[\iota,\sigma]$ one gets a 1-parameter family
$(\iota_t)_{[0,1]}\subset\Hom(P,Q)$ with $\iota_0=\iota$ and
$\iota_1=\sigma$ such that, for every $t\in[0,1]$, the image of
$\iota_t$ is a polygon, isomorphic to $P$ and with vertices
$v_{ti}\in[w_i,w_{i+1}]$. In fact, the `vertex of $\iota_t(P)$
$\leftrightarrow$ edge of $Q$' incidence table remains constant in the
process of sliding for $t\in[0,1)$. So $\dim(\Im\iota_t)=2$ for
$t\in[0,1)$. But $\dim\Im\sigma=2$ as well because no affine line can
intersect all segments $[w_i,w_{i+1}]$ simultaneously -- this is where
we use the inequality $n\ge6$.\footnote{For $n=2$ the construction
  does not go through; an example is a tetrahedron and a plane,
  intersecting the tetrahedron in a parallelogram -- the identity
  embedding of the parallelogram into the tetrahedron is not a
  vertex.} In particular, $\Im(\iota_t)\cong P$ for all
$t\in[0,1]$. Since none of the mentioned incidences is lost for $t=1$
and $\sigma$ is a vertex, $\sigma$ belongs to more facets of
$\Hom(P,Q)$ than $\iota$. So the condition $v_{1i}\in[w_i,w_{i+1}]$
for $i=1,\ldots,2n$ forces $v_{1j}=w_j$ or $v_{1j}=w_{j+1}$ for some
$j$. But then one easily deduces $w_1,\ldots,w_{2n}\in\Im\sigma$,
forcing $\dim(\Im\sigma)=3$. This contradicts the fact that
$\Im\sigma\cong P$, proving the claim.

Consider the special case of the construction above, using the regular $n$-gon $P_n$ with $n$ even:
\begin{align*}
Q=\conv(&\hat P_n,\check P_n)\subset\CC\oplus\RR,\quad \text{with}\\
&\hat P_n=(P_n,-1)\subset\CC\oplus\RR,\\
&\check P_n=(\eta P_n,1)\subset\CC\oplus\RR,\\
&\eta=\cos(\pi/n)+\sin(\pi/n)i.
\end{align*}

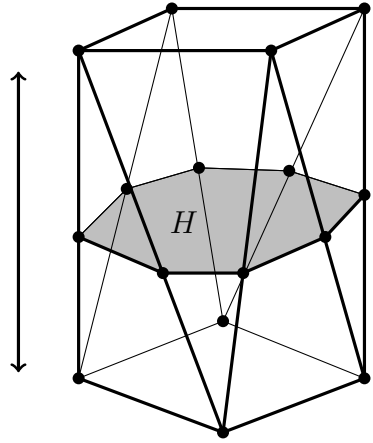
\begin{figure}
\begin{tikzpicture}[scale=0.4]
\draw[fill=lightgray] (0,6.5) -- (1.6,8.1) -- (4.0,8.8) -- (7.0,8.7)
-- (9.5,7.9) -- (8.2,6.5) -- (5.47,5.3) -- (2.8,5.3) -- (0,6.5);
\fill(0,1.8) circle(0.2);
\fill(4.8,0) circle(0.2);
\fill(9.5,1.8) circle(0.2);
\fill(4.8,3.7) circle(0.2);
\fill(0,6.5) circle(0.2);
\fill(2.8,5.3) circle(0.2);
\fill(5.47,5.3) circle(0.2);
\fill(8.2,6.5) circle(0.2);
\fill(9.5,7.9) circle(0.2);
\fill(7.0,8.7) circle(0.2);
\fill(4.0,8.8) circle(0.2);
\fill(1.6,8.1) circle(0.2);
\fill(0,12.7) circle(0.2);
\fill(6.4,12.7) circle(0.2);
\fill(9.5,14.1) circle(0.2);
\fill(3.1,14.1) circle(0.2);
\draw[very thick] (0,1.8) -- (4.8,0);
\draw[very thick] (4.8,0) -- (9.5,1.8);
\draw[very thick] (9.5,1.8) -- (9.5,14.1);
\draw[very thick] (9.5,14.1) -- (3.1,14.1);
\draw[very thick] (3.1,14.1) -- (0,12.7);
\draw[very thick] (0,12.7) -- (0,1.8);
\draw[very thick] (0,6.5) -- (2.8,5.3);
\draw[very thick] (2.8,5.3) -- (5.47,5.3);
\draw[very thick] (5.47,5.3) -- (8.2,6.5);
\draw[very thick] (8.2,6.5) -- (9.5,7.9);
\draw[very thick] (0,12.7) -- (6.4,12.7);
\draw[very thick] (6.4,12.7) -- (9.5,14.1);
\draw[very thick] (0,12.7) -- (4.8,0);
\draw[very thick] (4.8,0) -- (6.4,12.7);
\draw[very thick] (6.4,12.7) -- (9.5,1.8);
\draw (0,1.8) -- (4.8,3.7);
\draw (4.8,3.7) -- (9.5,1.8);
\draw (0,6.5) -- (1.6,8.1);
\draw (1.6,8.1) -- (4.0,8.8);
\draw (4.0,8.8) -- (7.0,8.7);
\draw (7.0,8.7) -- (9.5,7.9);
\draw (0,1.8) -- (3.1,14.1);
\draw (3.1,14.1) -- (4.8,3.7);
\draw (4.8,3.7) -- (9.5,14.1);
\draw[<->,very thick](-2,2) -- (-2,12);
\draw node at (3.5,7){$H$};
\end{tikzpicture}
\caption{The construction of Example \ref{nonvertexfactorization} with
$P_n$ regular, $n=4$}
\end{figure}

\medskip\noindent For every $h\in(-1,1)$, the polygon
$P_h=Q\cap(\CC,h)$, is a \emph{centrally symmetric} $2n$-gon and the
identity embedding $\iota_h:P_h\to Q$ is a vertex of $\Hom(P_h,Q)$. We
can choose surjective maps $\rho_h:\Diamond_n\to P_h$, $h\in(-1,1)$ so
that (i) for every $h$, vertices map to vertices and (ii) the maps
$\rho_h$, viewed as elements of $\aff(\RR^n,\CC\oplus\RR)$,
continuously depend on $h$. Because each $\rho_h$ maps vertices to vertices, we have $\rho_h\in\vertex(\Diamond_n,P_h)$ for every $h$. Moreover, $\rho_h$ is a deflation for every $h$.

It is important that the assignment $h\mapsto\rho_h$ is not just continuous, but even an affine map $(-1,1)\to\aff(\RR^n,\CC\oplus\RR)$.

Summarizing, for every $h\in(-1,1)$, the composite map $f_h=\iota_h\circ\rho_h$ has the surjective factor $(f_h)_{\surj}=\rho_h$ a deflation and the injective factor $(f_h)_{\inj}=\iota_h$ a vertex map. Yet,
$$
f_h\notin\vertex(\Hom(\Diamond_n,Q)).
$$
For simplicity of notation we can restrict to the case $h=0$, and then $f_0$ fits into the affine 1-family $(f_h)_{h\in(-1,1)}\subset\Hom(\Diamond_n,Q)$.

\medskip From the construction above one can derive new examples of vertex factorizations of non-vertex maps as follows. Observe that, for every $h\in(-1,1)$, there exists a deflation $\tau_h:\Box_n\to P_h$. So the composite maps $g_h=\iota_h\circ\tau_h$ do not belong to $\vertex(\Hom(\Box_n,Q))$, yet $(g_h)_{\surj}$ is a deflation and $(g_h)_{\inj}$ is a vertex map.
\end{example}

\begin{example}[\emph{Face-collapses that are not deflations}]\label{propercollapses}
Not all face-collapses are deflations. Here is an example. Let $\{e_1,e_2,e_3\}\subset\RR^3$ be the standard basis and consider the parallel projection in the direction of $-e_3$:
$$
\pi:\conv(0,2e_1,2e_2,e_3,e_1+e_3,e_2+e_3)\to\conv(0,2e_1,2e_2)
$$

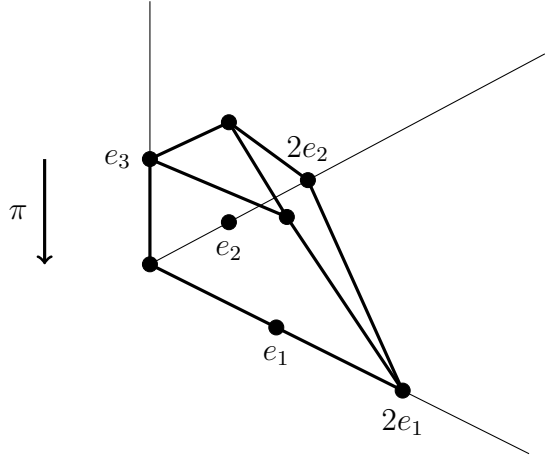
\begin{figure}
\tikzstyle{vertex}=[circle,draw,fill,inner sep=2pt]
\begin{tikzpicture}[scale=0.7]
\node at (0,0)[vertex]{};
\node at (2.4,-1.2) [vertex, label=below:$e_1$]{};
\node at (4.8,-2.4) [vertex, label=below:$2e_1$]{};
\node at (1.5,0.8) [vertex, label=below:$e_2$]{};
\node at (3,1.6) [vertex, label=above:$2e_2$]{};
\node at (0,2) [vertex, label=left:$e_3$]{};
\node at (2.6,0.9) [vertex]{};
\node at (1.5,2.7) [vertex]{};

\draw (0,0) -- (0,5);
\draw (0,0) -- (7.5,4);
\draw (0,0) -- (7.2,-3.6);
\draw[very thick] (0,0) -- (4.8, -2.4);
\draw[very thick] (0,0) -- (0,2);
\draw[very thick] (0,2) -- (2.6,0.9);
\draw[very thick] (2.6,0.9) -- (4.8,-2.4);
\draw[very thick] (0,2) -- (1.5,2.7);
\draw[very thick] (1.5,2.7) -- (3,1.6);
\draw[very thick] (3,1.6) -- (4.8,-2.4);
\draw[very thick] (1.5,2.7) -- (2.6,0.9);
\draw[->, very thick] (-2,2) -- (-2,0);
\draw (-2.5,1) node{$\pi$};
\end{tikzpicture}
\caption{A face-collapse that is not a deflation}
\end{figure}

\medskip\noindent We have $\pi=f_{\Gamma}$ for $\Gamma=\{[0,e_3]\}$. However, $\pi$ is not a deflation because $\pi(e_1+e_3)$ and $\pi(e_2+e_3)$ are neither vertices not interior points of the target polytope.
\end{example}

\begin{example}[\emph{Not all surjective vertex maps are face-collapses}]\label{propersurjvertices}
By a slight modification of the map above, we get an example of a surjective vertex map which is not a face-collapse. Consider the parallel projection in the direction $-e_3$:
$$
\rho:\conv(0,2e_1,2e_2,e_1+e_3,e_2+e_3)\to\conv(0,2e_1,2e_2)
$$

\medskip
\begin{figure}
\tikzstyle{vertex}=[circle,draw,fill,inner sep=2pt]
\begin{tikzpicture}[scale=0.7]
\node at (0,0)[vertex]{};
\node at (2.4,-0.6) [vertex, label=below:$e_1$]{};
\node at (4.8,-1.2) [vertex, label=below:$2e_1$]{};
\node at (1.5,0.8) [vertex, label=above:$e_2$]{};
\node at (3,1.6) [vertex, label=above:$2e_2$]{};
\node at (0,2) [vertex, label=left:$e_3$]{};
\node at (2.6,0.9) [vertex]{};
\node at (1.5,2.7) [vertex]{};

\draw (0,0) -- (0,5);
\draw (0,0) -- (7.5,4);
\draw (0,0) -- (7.2,-1.8);
\draw[very thick] (0,0) -- (4.8, -1.2);
\draw[very thick] (0,0) -- (1.5,2.7);
\draw[very thick] (0,0) -- (2.6,0.9);
\draw[very thick] (2.6,0.9) -- (4.8,-1.2);
\draw[very thick] (1.5,2.7) -- (3,1.6);
\draw[very thick] (3,1.6) -- (4.8,-1.2);
\draw[very thick] (1.5,2.7) -- (2.6,0.9);
\draw[->, very thick] (-2,2) -- (-2,0);
\draw (-2.5,1) node{$\rho$};
\end{tikzpicture}
\caption{A surjective vertex map that is not a face-collapse}
\end{figure}
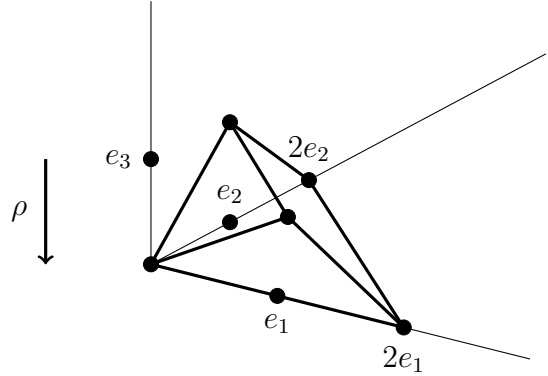

\medskip\noindent This projection cannot be a face-collapse because
none of the positive dimensional faces of the source polytope is
collapsed into a point. To show that $\rho$ is a vertex, assume there
is an affine 1-family $(\rho_t)_{(-1,1)}$ with $\rho_0=\rho$. Then it
must be constant on the vertices of $\conv(0,2e_1,2e_2)$. But it must
also be constant on $e_1+e_3$ and $e_2+e_3$. In fact, if the family is not constant there then $\rho_t(e_1+e_3)$ and $\rho_t(e_2+e_3)$ must trace out parallel intervals as $t$ varies over $(-1,1)$ (one uses barycentric coordinates). But, on the other hand, these trajectories must be confined to the non-parallel edges $[0,2e_1]$ and $[0,2e_2]$ of the target polytope. So the family $(\rho_t)_{(-1,1)}$ is constant on all vertices of the source polytope, forcing the family to be constant -- a contradiction.
\end{example}

\begin{example}[\emph{Many incident facets}]\label{onmanyfacets} As we know, $\dim(\Hom(P_n,P_n))=6$ for
  the regular $n$-gon, $n\in\NN$. By
  Proposition \ref{homproperties}(1), any automorphism $P_n\to P_n$
  sits on $2n$ facets of $\Hom(P_n,P_n)$. These $2n$ facets can be
  split into two groups of $n$ facets, each defining an edge of
  $\Hom(P_n,P_n)$. For the identity map, these groups are specified as
  follows.

\begin{align*}
&\FF_1=\left\{H\big(\iota(\zeta_n^k)),[\zeta_n^k,\zeta_n^{k+1}]\big)\subset\Hom(P_n,P_n)\ |\ k\in\ZZ\right\},\\
&\FF_2=\left\{H\big(\iota(\zeta_n^k)),[\zeta_n^{k-1},\zeta_n^k]\big)\subset\Hom(P_n,P_n)\ |\ k\in\ZZ\right\},
\end{align*}
notation as in Proposition \ref{homproperties}(1). The edges are, correspondingly,
\begin{align*}
&E_1=\bigcap_{\FF_1}F=\left\{\iota_t:P_n\to P_n,\quad \zeta_n^k\mapsto\left(\frac12-\frac t2\right)\zeta_n^k+\left(\frac12+\frac t2\right)\zeta_n^{k+1}\right\}_{t\in[-1,1]},\ \text{and}\\
&E_2=\bigcap_{\FF_2}F=\left\{\rho_t:P_n\to P_n,\quad \zeta_n^k\mapsto\left(\frac12-\frac t2\right)\zeta_n^{k-1}+\left(\frac12+\frac t2\right)\zeta_n^k\right\}_{t\in[-1,1]}.
\end{align*}
We leave to the reader to check that $E_1$ and $E_2$ are in fact the edges, joining the identity map with the rotations by $2\pi/n$ and $-2\pi/n$, respectively.

\begin{figure}
\begin{tikzpicture}[scale=2];
\fill (0,1) circle(0.035);
\fill (-.951,.309) circle(0.035);
\fill (-.588,-.809) circle(0.035);
\fill (.588,-.809) circle(0.035);
\fill (.951,.309) circle(0.035);
\fill (-.238,.827) circle(0.035);
\fill (-.860,-.030) circle(0.035);
\fill (-.294,-.809) circle(0.035);
\fill (.679,-.530) circle(0.035);
\fill (.713,.482) circle(0.035);
\draw[very thick] (0,1) -- (-.951,.309) -- (-.951,.309) --
(-.588,-.809) -- (.588,-.809) -- (.951,.309) -- (0,1);
\draw[fill=lightgray] (-.238,.827) -- (-.860,-.030) -- (-.294,-.809) -- (.679,-.530)
-- (.713,.482) -- (-.238,.827);
\draw[->] (-.9,.45) -- (-.142,1);
\draw[->] (.1,1.05) -- (.8,.54);
\draw[->] (1.05,.269) -- (.7,-.809);
\draw[->] (.5,-.95) -- (-.5,-.95);
\draw[->] (-.7,-.809) -- (-1.05,.269);
\end{tikzpicture}
\caption{Two edges of $\Hom(P_n,P_n)$ incident to the identity map, $n=6$}
\end{figure}
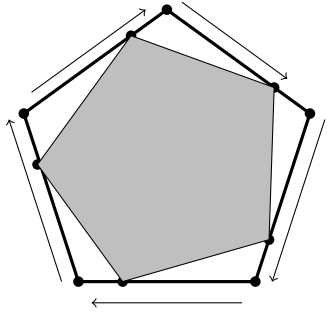
\end{example}

\begin{example}[\emph{Gaps in ranks}]\label{bisimplexinsimplex} For two polytopes $P$ and $Q$, the range of possible ranks $r$ of the vertices of $\Hom(P,Q)$ is
$$
0\le r\le\min(\dim P,\dim Q).
$$
For any polytopes $P$ and $Q$, the polytope $\Hom(P,Q)$ always has rank 0 vertices. If $\dim P,\dim Q>0$ then there are also rank 1 vertices, explicitly described in Corollary \ref{rank1}.
However, there may be gaps in the range.

\medskip\noindent\emph{Claim.} For any natural number $n$ there are no rank $2$ vertices in $\Hom(\Diamond_n,\Delta_2)$.

\medskip First, for every polytope $P$ and an element $f\in\vertex(\Diamond_n,P)$, one has
$$
\vertex(\Im f)\subset\vertex(P\cap(-P^{(c)})),
$$
where $c$ is the center of $\Im f$ and $-P^{(c)}$ is the symmetric image of $P$ w.r.t. $c$. In fact, because $\Im f$ is centrally symmetric, we have $\Im f\subset P\cap(-P^{(c)})$. But if there is a vertex $v\in \Im f$ not in $\vertex(P\cap(-P^{(c)}))$, then there is an open interval $I\subset\Im f$, containing $v$. Then, by sliding $v$ along $I$ (and $-v^{(c)}$ along $-I^{(c)}$ in the opposite direction), one can define an affine 1-parameter $(f_t)_{(-1,1)}\subset\Hom(\Diamond_n,P)$ with $f_0=f$. Such is not possible in view of $(\pc_1)$.

Returning to the case $P=\Delta_2$, assume to the contrary that $f$ is
a rank $2$ vertex of $\Hom(\Diamond_n,\Delta_2)$. Let $c$ be the
center of $\Im f$. There are two cases: (i)
$\Delta_2\cap(-\Delta_2^{(c)})$ is a parallelogram, or (ii)
$\Delta_2\cap(-\Delta_2^{(c)})$ is a centrally symmetric hexagon. In the
first case, there is a vertex $x\in\Delta_2$ such that $-x^{(c)}$ is in
the interior of the edge $E\subset\Delta_2$, opposite to $x$. So
sliding $c$ along a small open interval $c\in I$, parallel to $E$,
produces an affine family of parallelograms
$(\Delta_2\cap(-\Delta_2^{(c)}))_I$ in $\Delta_2$. The latter can be used
to define an affine 1-parameter family
$(f_t)_{(-1,1)}\subset\Hom(\Diamond_n,\Delta_2)$ with $f_0=f$. In the
second case, we use the similar sliding procedure, except now the small
open interval $I\subset\Delta_2$, containing $c$, is not constrained
to have any particular direction -- we can always define an affine family of centrally symmetric hexagons $(\Delta_2\cap(-\Delta_2^{(c)}))_I$.  In either case we get a contradiction by $(\pc_1)$.
\end{example}

\begin{figure}
\begin{tabular}{cc}
\begin{tikzpicture}[scale=0.7]
\fill(0.1,0) circle(0.13);
\fill(2.9,0) circle(0.13);
\fill(5.4,0) circle(0.13);
\fill(0.25,4) circle(0.13);
\fill(3.2,4) circle(0.13);
\fill(6,4) circle(0.13);
\fill(3.0,1.9) circle(0.13);
\draw (0.1,0) -- (5.4,0);
\draw (5.4,0) -- (3.2,4);
\draw (3.2,4) -- (0.1,0);
\draw (0.25,4) -- (6,4);
\draw (6,4) -- (2.9,0);
\draw (2.9,0) -- (0.25,4);
\draw[<->](2.5,1.9) -- (3.5,1.9);
\draw (3,1.35) node{$c$};
\draw (-0.5,-0.5) node{$\Delta_2$};
\draw (6,3) node{-$\Delta_2^{(c)}$};
\end{tikzpicture}
&
\begin{tikzpicture}[scale=0.7]
\fill(0,0) circle(0.13);
\fill(6.85,0) circle(0.13);
\fill(4.5,4.5) circle(0.13);
\fill(4.2,-1.5) circle(0.13);
\fill(1.7,3) circle(0.13);
\fill(8.2,3) circle(0.13);
\fill(4.3,1.5) circle(0.13);
\draw (0,0) -- (6.85,0);
\draw (6.85,0) -- (4.5,4.5);
\draw (4.5,4.5) -- (0,0);
\draw (1.7,3) -- (4.2,-1.5);
\draw (4.2,-1.5) -- (8.2,3);
\draw (8.2,3) -- (1.7,3);
\draw[<->] (3.8,1.3) -- (4.8,1.7);
\draw (4.3,1.1) node{$c$};
\draw (5.5,4.5) node{$\Delta_2$};
\draw (5.5,-2) node{-$\Delta_2^{(c)}$};
\end{tikzpicture}
\end{tabular}
\caption{The sliding argument in Example~\ref{bisimplexinsimplex}}
\label{fig:sliding}
\end{figure}
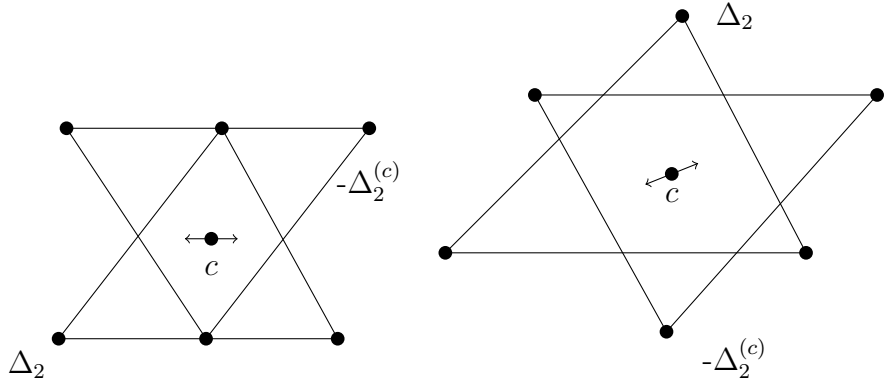

As a side observation, the right picture in Figure~\ref{fig:sliding} gives rise to yet another example of a deflation, followed by an injective vertex map, such that the composite is not a vertex map. In fact, the deflation is a surjective affine map $\Diamond_3\to\Delta_2\cap\Delta_c^{(c)}$ and the injective vertex map is the embedding $\Delta_2\cap\Delta_c^{(c)}\to\Delta_2$. One needs to apply a variant of the sliding argument in Example \ref{nonvertexfactorization} to show that the latter belongs to $\vertex\big(\Delta_2\cap\Delta_c^{(c)},\Delta_2\big)$. The crucial point is that no perturbation of the hexagon in $\Delta_2$ can keep both the isomorphism class of the hexagon and the `hexagon vertex $\leftrightarrow$ triangle edge' incidence table constant.

\section{Generic pairs of polygons}\label{generic}
The goal of this section is to understand some of the structure of the
hom-polytope of a generic pair of polygons $(P,Q)$. The main
result, Theorem~\ref{thm:generic}, is that such a hom-polytope is 'almost
  simple': apart from the short list of vertex maps of rank zero and
  one, every vertex of $\Hom(P,Q)$ is simple.

We begin by constructing spaces of polytopes that will allow
us to precisely state this result. Fix dimensions $d$ and $e$ and
integers $m \geq d+1$, $n \geq e+1$. To every real $d \times m$ matrix
$M$ we associate the polytope $P(M) \subseteq \RR^d$ given as the
convex hull of the columns of $M$. Let $R_{d,m} \subseteq \RR^{d
  \times m}$ be the set of matrices $M$ for which the columns are in
convex position and affinely span $\RR^m$; that is, for which $P(M)$ is a $d$-polytope with $m$
vertices. Similarly, to every real $n \times e$ matrix $M'$ we
associate the polyhedron $Q(M') \subseteq \RR^e$ given by the system of
inequalities $M' \xx \leq \mathbf{1}$, where $\mathbf{1} \in \RR^n$
denotes the vector of all ones. Let $R'_{e,n}$ be
the set of matrices $M'$ for which $Q(M')$ is an $e$-polytope with $n$
facets.

Note that the conditions defining $R_{d,m}$ and $R'_{e,n}$ are stable under small perturbation; that is, $R_{d,m}$ and $R'_{e,n}$
are open subsets of $\RR^{d \times m}$ and $\RR^{n \times e}$, respectively. Also, for generic $M$ and $M'$, $P(M)$ is simplicial and $Q(M')$ is simple. Furthermore, $R_{d,m}\subset\RR^{d\times m}$ and $R'_{e,n}\subset\RR^{n\times e}$ are \emph{semi-algebraic} subsets (i.~e., defined by algebraic equalities and (strict) inequalities). In fact, the convex $d$-polytopes with $m$-vertices give rise to only finitely many combinatorial types and each type is represented by a semi-algebraic subset of $\RR^{d\times m}$, consisting of the matrices whose certain $d\times d$ minors vanish, certain $d\times d$ minors are positive, and certain $d\times d$ minors are negative. Therefore, $R_{d,m}$ is the union of finitely many semi-algebraic sets and, as such, is itself semi-algebraic. A similar argument applies to $R'_{e,n}$.

Clearly, $R_{2,m}$ and $R'_{2,n}$ are the realization spaces of single combinatorial types. Moreover, the realization space of any 3-dimensional combinatorial type is a classically a smooth ball; however, starting from $d=e=4$, the realization space of a combinatorial type can be arbitrarily complicated; see \cite{Richter-Gebert}.

It is in the context above that we consider the hom-polytopes $\Hom(P(M), Q(M'))$ for a generic pair
$(M,M')$.

We now focus on the case of pairs of polygons: $d=e=2$.
Every $m$-gon in the plane is associated to a matrix in $R_{2,m}$
that is unique up to permuting its columns. Every $n$-gon in the plane
that contains the origin in its interior is associated to a matrix in $R'_{2,n}$ that is
unique up to permuting its rows. So $R_{2,m} \times R'_{2,n}$,
a full-dimensional semi-algebraic open subset of the Euclidean space $\RR^{2
  \times m} \times \RR^{n \times 2}$, effectively represents the space
of pairs of polygons.

One the one hand, we have $\dim(R_{2,m} \times R'_{2,n})=2m+2n$. On the other hand, for every pair $(M,M')\in \dim(R_{2,m} \times R'_{2,n})$,
the space of all small perturbations of the facets of $\Hom(P(M),Q(M'))$, keeping the facet-normals invariant, has dimension $mn$ (by Proposition \ref{homproperties}(1)).
Yet, we have

\begin{theorem} \label{thm:generic}
There is a dense open subset $U_{m,n}$ of $R_{2,m} \times
R'_{2,n}$ such that if $(M,M') \in U_{m,n}$, then every vertex map
$f:P(M) \to Q(M')$ of rank two is a simple vertex of $\Hom(P(M), Q(M'))$.
\end{theorem}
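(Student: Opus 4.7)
The plan is to translate simplicity into a facet-incidence count via Proposition~\ref{homproperties}(1). Since $\dim\Hom(P(M),Q(M'))=6$, a vertex $f$ is simple precisely when the number of pairs $(v,F)$ with $v\in\vertex(P(M))$, $F\in\FF(Q(M'))$ and $f(v)\in F$ equals exactly $6$. For a rank-two vertex map $f$, each vertex $v_i$ of $P(M)$ maps to one of: the interior of $Q(M')$ (contributing $0$ incidences), the relative interior of a facet (contributing $1$), or a vertex of $Q(M')$ -- which is the intersection of exactly two facets of the polygon, contributing $2$. Because $f$ is determined by the supporting hyperplanes of its incident facets inside the six-dimensional affine space $\aff(P,Q)$, the total incidence count is always at least $6$. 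Hence the theorem reduces to the claim that, for a generic pair $(M,M')$, no rank-two vertex map attains incidence count strictly greater than $6$.

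The next step is to enumerate, up to the cyclic and reflective symmetries of the source and target polygons, the \emph{incidence types} of rank-two affine maps $f:P(M)\to Q(M')$ whose incidence count is $\ge 7$. An incidence type records, for each vertex of $P$, whether its image lies in $\int(Q)$, in the relative interior of a specific facet, or at a specific vertex of $Q$; only the ``active'' vertices (those hitting $\partial Q$) carry any constraint, and since $f$ is affine of rank two, the cyclic order of the active images along $\partial Q$ is forced to be compatible with their cyclic order along $\partial P$. A finite-case analysis -- absorbing vertices mapped to $\int(Q)$, merging consecutive vertices of $P$ mapped to a common facet or vertex of $Q$, and quotienting by dihedral symmetry -- distills the list down to $31$ canonical types, independent of $m$ and $n$.

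For a given incidence type $T$ with count $k\ge 7$, choose three affinely independent vertices of $P(M)$ as an affine basis; this identifies $\aff(P,Q)$ with $\RR^{6}$. The requirement that $f$ realize $T$ becomes a system of $k$ linear equations in these six unknowns, with coefficients polynomial in the entries of $M$ and $M'$. The system is consistent if and only if every $7\times 7$ minor of the $k\times 7$ augmented matrix vanishes; hence, to ensure that no $f$ of type $T$ exists for a generic $(M,M')$, it suffices to exhibit one such minor $\Delta_T(M,M')$ that is not identically zero as a polynomial in the entries of $M,M'$. I would then verify in \texttt{Macaulay 2} that each of the $31$ resulting polynomials is nonzero -- for instance, by evaluation at randomized rational specializations or reduction modulo a convenient prime. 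Taking $U_{m,n}$ to be the intersection of $R_{2,m}\times R'_{2,n}$ with the complement of $\bigcup_{T}\{\Delta_T=0\}$ then yields the desired dense open subset, since finitely many real-algebraic hypersurfaces do not cover an open semi-algebraic set of full dimension.

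The main obstacle is the combinatorial bookkeeping of the second step: one must justify rigorously that the a priori unbounded data of incidence types really does reduce to $31$ essentially distinct cases, and must identify explicitly for each case the $7\times 7$ minor whose non-vanishing will be tested. After that reduction, non-vanishing of the $31$ explicit polynomial determinants is a concrete -- though computationally nontrivial -- task well-suited to a computer algebra system.
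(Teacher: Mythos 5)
Your proposal follows essentially the same route as the paper: simplicity of a rank-two vertex is reduced to showing that, for generic $(M,M')$, no full-rank map lies on the affine hulls of seven facets of $\Hom(P,Q)$; each facet condition is a linear equation in the six entries of $f$ with coefficients polynomial in $(M,M')$; the finitely many combinatorial cases each yield a generic $7\times 7$ determinant whose non-vanishing (certified by \texttt{Macaulay 2}) rules out the case on a dense open set; and $U_{m,n}$ is the intersection of the resulting complements.

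The one step you flag as the main obstacle --- cutting the a priori unbounded list of incidence types down to $31$ --- is precisely where the paper's argument has content beyond bookkeeping, and your cyclic-order heuristic is not how it is done. The paper encodes a choice of seven facet conditions by a bipartite \emph{coincidence graph} with seven edges, one part recording the distinct edge lines of $Q$ involved and the other the distinct vertices of $P$, and then eliminates all graphs except disjoint unions of paths by direct geometric arguments: a node of degree $\ge 3$ on the $Q$-side would require three edge lines of a polygon to be concurrent, which is impossible; a node of degree $\ge 3$ on the $P$-side sends three affinely independent vertices of $P$ to one line, so $f$ is not of full rank; a $4$-cycle sends two vertices of $P$ to the same point, again killing the rank; and a $6$-cycle forces $P$ and $Q$ to be triangles, in which case $\Hom(P,Q)\cong Q^3$ is simple outright. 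The $31$ graphs are then exactly the vertex-disjoint unions of paths with seven edges (with the two parts distinguished), and your determinant computation proceeds as in the paper. Without some such elimination lemma your enumeration does not close, so this is the piece you would still need to supply.
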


\begin{proof}
Write $P:=P(M)$, $Q:=Q(M')$. Since $d=e=2$, $\Hom(P,Q)$ is a six-dimensional polytope and we can write
\[ M =
\begin{pmatrix}
s_1 & \hdots & s_m \\
t_1 & \hdots & t_m
\end{pmatrix},
M' =
\begin{pmatrix}
u_1 & v_1 \\
\vdots & \vdots \\
u_n & v_n
\end{pmatrix}. \]

To verify the conclusion of Theorem~\ref{thm:generic} for fixed $P$
and $Q$, it suffices to show the stronger statement that if a map $f \in
\aff(\RR^2,\RR^2)$ lies in the intersection of the affine hulls of
seven facets of $\Hom(P,Q)$, then either
\begin{itemize}
\item $f(P)$ is not contained in $Q$, or
\item $f$ is not of full rank.
\end{itemize}

For each $i=1,\dots,n$ and $j=1,\dots,m$
there is a facet of $\Hom(P,Q)$ whose affine span is given by the
equation $$(u_i, v_i) \cdot f(s_j,t_j) = 1.$$

If we write $f(x,y) = (\alpha x+ \beta y+\gamma, \delta x+ \epsilon y+
\zeta)$, then more explicitly,
this affine span is given by
\[ \begin{pmatrix} u_i & v_i & 0 \end{pmatrix}
\begin{pmatrix} \alpha & \beta & \gamma \\
\delta & \epsilon & \zeta \\
0 & 0 & 1\end{pmatrix}
\begin{pmatrix} s_j \\ t_j \\ 1 \end{pmatrix} = 1, \] \label{eq:facetmatrix}
or as a linear constraint on the entries of $f$,
\[ (u_i s_j) \alpha + (u_i t_j) \beta + u_i \gamma + (v_is_j)\delta +
(v_it_j) \epsilon + v_i \zeta
= 1. \] \label{eq:facetlinear}
In other words, the condition for the affine hulls of seven facets to meet is that the last column in the matrix below is a linear combination of the first six columns:
\[ A_{i_1,\dots,i_7,j_1,\dots,j_7}=\begin{pmatrix}
u_{i_1}s_{j_1} & u_{i_1}t_{j_1} & u_{i_1} & v_{i_1}s_{j_1} &
v_{i_1}t_{j_1} & v_{i_1} & -1 \\
u_{i_2}s_{j_2} & u_{i_2}t_{j_2} & u_{i_2} & v_{i_2}s_{j_2} &
v_{i_2}t_{j_2} & v_{i_2} & -1 \\
\vdots & \vdots & \vdots & \vdots & \vdots & \vdots & \vdots \\
u_{i_7}s_{j_7} & u_{i_7}t_{j_7} & u_{i_7} & v_{i_7}s_{j_7} &
v_{i_7}t_{j_7} & v_{i_7} & -1 \end{pmatrix} \in \RR^{7 \times 7}
\]
So the matrix $A_{i_1,\dots,i_7,j_1,\dots,j_7}$ needs to be singular for some distinct pairs of indices $(i_1,j_1) \dots,
(i_7,j_7)$. Note that the indices $i_1, \dots i_7$ need not all be
distinct, nor do $j_1, \dots j_7$.

Thus it will suffice to show that for all choices of $(i_1,j_1), \dots,
(i_7,j_7)$, either
\begin{itemize}
\item the seven facets do not intersect in any point of
  $\Hom(P,Q)$, or
\item every map in the intersection of the seven facets is of less
  than full rank, or
\item the generic matrix
$A_{i_1,\dots,i_7,j_1,\dots,j_7}$ is nonsingular.
\end{itemize}

We index the various cases for lists of indices by \emph{coincidence graphs}: bipartite
graphs $G = (A,B,E)$ with exactly seven edges and no isolated
nodes. The set $A$ represents the distinct elements of the list $i_1,
\dots i_7$ (facets of $Q$) and $B$ represents the distinct elements
of $j_1, \dots, j_7$ (vertices of $P$.) The nodes of $G$ are not
labelled but the two parts $A$ and $B$ of $G$ are distinguishable: the condition
that two vertices of $P$ land on the same edge line (i.e. affine span
of an edge) of $Q$ is not the same as the condition that a vertex of
$P$ lands on the intersection of two different edge lines of $Q$. For
instance, the case that $i_1,\ldots,i_7$ are all distinct and
$j_1,\ldots,j_7$ are also all distinct is encoded by the graph with
seven vertex-disjoint edges, the first graph in Table~\ref{tab:allgraphs}. The case that $j_1=j_2,i_3=i_4$, and
everything else is distinct is encoded by the third graph in the left column of Table~\ref{tab:allgraphs}.

\begin{observation} \label{obs:whichgraphs}
$ $
\begin{enumerate}
\item If $A$ contains a node of degree greater than two, then there is a vertex
  $v$ of $P$ such that $f(v)$ is a point of intersection of three
  distinct edge lines of $Q$. But no three edge lines of a polygon can meet at a point (whether this point is inside or outside of the polygon).
\item If $B$ contains a node of degree greater than two,
  then $f$ sends three different vertices of $P$ onto the same edge
  line of $Q$. Since any three vertices of $P$ are affinely
  independent, this implies that $f$ sends all of $\RR^2$ onto the
  same line; i.e, $f$ is not of full rank.
\item If $G$ contains a 4-cycle, then $f$ sends two vertices of $P$
  to the same point: the intersection of two edge lines of $Q$. So
  again $f$ is not of full rank.
\item If $G$ contains a 6-cycle, then there are vertices $v_1,
  v_2, v_3$ of $P$ and edge lines $\ell_1,
  \ell_2, \ell_3$ of $Q$ such that $f(v_1) = \ell_1 \cap
  \ell_2$, $f(v_2) = \ell_2 \cap \ell_3$, and $f(v_3) =
  \ell_3 \cap \ell_1$. For the condition $f(P) \subseteq Q$
  to also be satisfied, the three intersection points $\ell_1 \cap \ell_2$, $\ell_2 \cap \ell_3$,
  and $\ell_3 \cap \ell_1$ must all be vertices of $Q$. That is, $Q$
  is a triangle. Then since $f(P)$ contains all three vertices of $Q$,
  $f(P) = Q$, which implies that $P$ is also a triangle. Then
  $\Hom(P,Q) = Q^3$, the product of three triangles, and then all of its
  vertices are simple.
\end{enumerate}
\end{observation}

In summary, we may now assume that $G$ has no cycles and no vertices of
degree greater than two. That is, $G$ is a union of
vertex-disjoint paths with exactly seven edges. The 31 such graphs
(with distinguished upper part $A$ and lower part $B$) are shown
in Table~\ref{tab:allgraphs}.

\medskip
\begin{table}
\begin{tabular}{c|c|c|c}
\begin{tikzpicture}[scale=0.5]
\draw (0,0) -- (0,1.5);
\draw (1,0) -- (1,1.5);
\draw (2,0) -- (2,1.5);
\draw (3,0) -- (3,1.5);
\draw (4,0) -- (4,1.5);
\draw (5,0) -- (5,1.5);
\draw (6,0) -- (6,1.5);
\foreach \i in {0,...,6}
{
	\fill (\i, 0) circle (0.2);
}
\foreach \i in {0,...,6}
{
        \fill (\i, 1.5) circle (0.2);
}
\end{tikzpicture} &

\begin{tikzpicture}[scale=0.5]
\draw (0,0) -- (0,1.5);
\draw (0,0) -- (1,1.5);
\draw (2,0) -- (2,1.5);
\draw (3,0) -- (3,1.5);
\draw (4,0) -- (4,1.5);
\draw (5,0) -- (5,1.5);
\draw (6,0) -- (6,1.5);
\fill(0,0) circle (0.2);
\foreach \i in {2,...,6}
{
	\fill (\i, 0) circle (0.2);
}
\foreach \i in {0,...,6}
{
        \fill (\i, 1.5) circle (0.2);
}
\end{tikzpicture} &

\begin{tikzpicture}[scale=0.5]
\draw (0,0) -- (0,1.5);
\draw (1,0) -- (0,1.5);
\draw (2,0) -- (2,1.5);
\draw (3,0) -- (3,1.5);
\draw (4,0) -- (4,1.5);
\draw (5,0) -- (5,1.5);
\draw (6,0) -- (6,1.5);
\foreach \i in {0,...,6}
{
	\fill (\i, 0) circle (0.2);
}
\fill(0,1.5) circle (0.2);
\foreach \i in {2,...,6}
{
        \fill (\i, 1.5) circle (0.2);
}
\end{tikzpicture}
&
\begin{tikzpicture}[scale=0.5]
\draw (0,0) -- (0,1.5);
\draw (0,0) -- (1,1.5);
\draw (1,0) -- (1,1.5);
\draw (2,0) -- (2,1.5);
\draw (3,0) -- (3,1.5);
\draw (4,0) -- (4,1.5);
\draw (5,0) -- (5,1.5);
\foreach \i in {0,...,5}
{
	\fill (\i, 0) circle (0.2);
}
\foreach \i in {0,...,5}
{
        \fill (\i, 1.5) circle (0.2);
}
\end{tikzpicture}
\\[8pt]

\begin{tikzpicture}[scale=0.5]
\draw (0,0) -- (0,1.5);
\draw (0,0) -- (1,1.5);
\draw (2,0) -- (2,1.5);
\draw (2,0) -- (3,1.5);
\draw (4,0) -- (4,1.5);
\draw (5,0) -- (5,1.5);
\draw (6,0) -- (6,1.5);
\fill (0,0) circle (0.2);
\fill (2,0) circle (0.2);
\foreach \i in {4,...,6}
{
	\fill (\i, 0) circle (0.2);
}
\foreach \i in {0,...,6}
{
        \fill (\i, 1.5) circle (0.2);
}
\end{tikzpicture} &

\begin{tikzpicture}[scale=0.5]
\draw (0,0) -- (0,1.5);
\draw (1,0) -- (0,1.5);
\draw (2,0) -- (2,1.5);
\draw (3,0) -- (2,1.5);
\draw (4,0) -- (4,1.5);
\draw (5,0) -- (5,1.5);
\draw (6,0) -- (6,1.5);
\foreach \i in {0,...,6}
{
	\fill (\i, 0) circle (0.2);
}
\fill (0,1.5) circle (0.2);
\fill (2,1.5) circle (0.2);
\foreach \i in {4,...,6}
{
        \fill (\i, 1.5) circle (0.2);
}
\end{tikzpicture}
&
\begin{tikzpicture}[scale=0.5]
\draw (0,0) -- (0,1.5);
\draw (0,0) -- (1,1.5);
\draw (2,0) -- (2,1.5);
\draw (3,0) -- (2,1.5);
\draw (4,0) -- (4,1.5);
\draw (5,0) -- (5,1.5);
\draw (6,0) -- (6,1.5);
\fill (0,0) circle (0.2);
\foreach \i in {2,...,6}
{
	\fill (\i, 0) circle (0.2);
}
\fill (0,1.5) circle (0.2);
\fill (1,1.5) circle (0.2);
\fill (2,1.5) circle (0.2);
\foreach \i in {4,...,6}
{
        \fill (\i, 1.5) circle (0.2);
}
\end{tikzpicture}
&
\begin{tikzpicture}[scale=0.5]
\draw (0,0) -- (0,1.5);
\draw (0,0) -- (1,1.5);
\draw (1,0) -- (1,1.5);
\draw (1,0) -- (2,1.5);
\draw (3,0) -- (3,1.5);
\draw (4,0) -- (4,1.5);
\draw (5,0) -- (5,1.5);

\fill (0,0) circle (0.2);
\fill (1,0) circle (0.2);
\foreach \i in {3,...,5}
{
	\fill (\i, 0) circle (0.2);
}
\foreach \i in {0,...,5}
{
        \fill (\i, 1.5) circle (0.2);
}
\end{tikzpicture}
\\[8pt]
\begin{tikzpicture}[scale=0.5]
\draw (0,0) -- (0,1.5);
\draw (1,0) -- (0,1.5);
\draw (1,0) -- (1,1.5);
\draw (2,0) -- (1,1.5);
\draw (3,0) -- (3,1.5);
\draw (4,0) -- (4,1.5);
\draw (5,0) -- (5,1.5);
\foreach \i in {0,...,5}
{
	\fill (\i, 0) circle (0.2);
}
\fill (0,1.5) circle (0.2);
\fill (1,1.5) circle (0.2);
\foreach \i in {3,...,5}
{
        \fill (\i, 1.5) circle (0.2);
}
\end{tikzpicture}
&
\begin{tikzpicture}[scale=0.5]
\draw (0,0) -- (0,1.5);
\draw (0,0) -- (1,1.5);
\draw (1,0) -- (1,1.5);
\draw (2,0) -- (2,1.5);
\draw (2,0) -- (3,1.5);
\draw (4,0) -- (4,1.5);
\draw (5,0) -- (5,1.5);
\foreach \i in {0,...,2}
{
	\fill (\i, 0) circle (0.2);
}
\foreach \i in {4,...,5}
{
	\fill (\i, 0) circle (0.2);
}
\foreach \i in {0,...,5}
{
        \fill (\i, 1.5) circle (0.2);
}
\end{tikzpicture}
&
\begin{tikzpicture}[scale=0.5]
\draw (0,0) -- (0,1.5);
\draw (0,0) -- (1,1.5);
\draw (1,0) -- (1,1.5);
\draw (2,0) -- (2,1.5);
\draw (3,0) -- (2,1.5);
\draw (4,0) -- (4,1.5);
\draw (5,0) -- (5,1.5);
\foreach \i in {0,...,5}
{
	\fill (\i, 0) circle (0.2);
}
\foreach \i in {0,...,2}
{
        \fill (\i, 1.5) circle (0.2);
}
\foreach \i in {4,...,5}
{
        \fill (\i, 1.5) circle (0.2);
}
\end{tikzpicture}
&
\begin{tikzpicture}[scale=0.5]
\draw (0,0) -- (0,1.5);
\draw (0,0) -- (1,1.5);
\draw (2,0) -- (2,1.5);
\draw (2,0) -- (3,1.5);
\draw (4,0) -- (4,1.5);
\draw (4,0) -- (5,1.5);
\draw (6,0) -- (6,1.5);
\fill (0, 0) circle (0.2);
\fill (2, 0) circle (0.2);
\fill (4, 0) circle (0.2);
\fill (6, 0) circle (0.2);
\foreach \i in {0,...,6}
{
        \fill (\i, 1.5) circle (0.2);
}
\end{tikzpicture}
\\[8pt]
\begin{tikzpicture}[scale=0.5]
\draw (0,0) -- (0,1.5);
\draw (0,0) -- (1,1.5);
\draw (2,0) -- (2,1.5);
\draw (2,0) -- (3,1.5);
\draw (4,0) -- (4,1.5);
\draw (5,0) -- (4,1.5);
\draw (6,0) -- (6,1.5);
\fill (0, 0) circle (0.2);
\fill (2, 0) circle (0.2);
\foreach \i in {4,...,6}
{
	\fill (\i, 0) circle (0.2);
}
\foreach \i in {0,...,4}
{
        \fill (\i, 1.5) circle (0.2);
}
\fill (6, 1.5) circle (0.2);
\end{tikzpicture}
&
\begin{tikzpicture}[scale=0.5]
\draw (0,0) -- (0,1.5);
\draw (0,0) -- (1,1.5);
\draw (2,0) -- (2,1.5);
\draw (3,0) -- (2,1.5);
\draw (4,0) -- (4,1.5);
\draw (5,0) -- (4,1.5);
\draw (6,0) -- (6,1.5);
\fill (0, 0) circle (0.2);
\foreach \i in {2,...,6}
{
	\fill (\i, 0) circle (0.2);
}
\foreach \i in {0,...,2}
{
        \fill (\i, 1.5) circle (0.2);
}
\fill (4, 1.5) circle (0.2);
\fill (6, 1.5) circle (0.2);
\end{tikzpicture}
&
\begin{tikzpicture}[scale=0.5]
\draw (0,0) -- (0,1.5);
\draw (1,0) -- (0,1.5);
\draw (2,0) -- (2,1.5);
\draw (3,0) -- (2,1.5);
\draw (4,0) -- (4,1.5);
\draw (5,0) -- (4,1.5);
\draw (6,0) -- (6,1.5);
\foreach \i in {0,...,6}
{
	\fill (\i, 0) circle (0.2);
}
\fill (0, 1.5) circle (0.2);
\fill (2, 1.5) circle (0.2);
\fill (4, 1.5) circle (0.2);
\fill (6, 1.5) circle (0.2);
\end{tikzpicture}
&


\begin{tikzpicture}[scale=0.5]
\draw (0,0) -- (0,1.5);
\draw (0,0) -- (1,1.5);
\draw (1,0) -- (1,1.5);
\draw (1,0) -- (2,1.5);
\draw (2,0) -- (2,1.5);
\draw (3,0) -- (3,1.5);
\draw (4,0) -- (4,1.5);
\foreach \i in {0,...,4}
{
	\fill (\i, 0) circle (0.2);
}
\foreach \i in {0,...,4}
{
        \fill (\i, 1.5) circle (0.2);
}
\end{tikzpicture}
\\[8pt]

\begin{tikzpicture}[scale=0.5]
\draw (0,0) -- (0,1.5);
\draw (0,0) -- (1,1.5);
\draw (1,0) -- (1,1.5);
\draw (1,0) -- (2,1.5);
\draw (3,0) -- (3,1.5);
\draw (3,0) -- (4,1.5);
\draw (5,0) -- (5,1.5);
\fill(0,0) circle (0.2);
\fill(1,0) circle (0.2);
\fill(3,0) circle (0.2);
\fill(5,0) circle (0.2);
\foreach \i in {0,...,5}
{
        \fill (\i, 1.5) circle (0.2);
}
\end{tikzpicture}
&
\begin{tikzpicture}[scale=0.5]
\draw (0,0) -- (0,1.5);
\draw (0,0) -- (1,1.5);
\draw (1,0) -- (1,1.5);
\draw (1,0) -- (2,1.5);
\draw (3,0) -- (3,1.5);
\draw (4,0) -- (3,1.5);
\draw (5,0) -- (5,1.5);
\fill(0,0) circle (0.2);
\fill(1,0) circle (0.2);
\foreach \i in {3,...,5}
{
	\fill (\i, 0) circle (0.2);
}
\foreach \i in {0,...,3}
{
        \fill (\i, 1.5) circle (0.2);
}
\fill(5,1.5) circle (0.2);
\end{tikzpicture}
&

\begin{tikzpicture}[scale=0.5]
\draw (0,0) -- (0,1.5);
\draw (1,0) -- (0,1.5);
\draw (1,0) -- (1,1.5);
\draw (2,0) -- (1,1.5);
\draw (3,0) -- (3,1.5);
\draw (3,0) -- (4,1.5);
\draw (5,0) -- (5,1.5);
\foreach \i in {0,...,3}
{
	\fill (\i, 0) circle (0.2);
}
\fill(5,0) circle (0.2);
\fill(0,1.5) circle (0.2);
\fill(1,1.5) circle (0.2);
\foreach \i in {3,...,5}
{
        \fill (\i, 1.5) circle (0.2);
}
\end{tikzpicture}
&
\begin{tikzpicture}[scale=0.5]
\draw (0,0) -- (0,1.5);
\draw (1,0) -- (0,1.5);
\draw (1,0) -- (1,1.5);
\draw (2,0) -- (1,1.5);
\draw (3,0) -- (3,1.5);
\draw (4,0) -- (3,1.5);
\draw (5,0) -- (5,1.5);
\foreach \i in {0,...,5}
{
	\fill (\i, 0) circle (0.2);
}
\fill(0,1.5) circle (0.2);
\fill(1,1.5) circle (0.2);
\fill(3,1.5) circle (0.2);
\fill(5,1.5) circle (0.2);
\end{tikzpicture}
\\[8pt]
\begin{tikzpicture}[scale=0.5]
\draw (0,0) -- (0,1.5);
\draw (0,0) -- (1,1.5);
\draw (1,0) -- (1,1.5);
\draw (2,0) -- (2,1.5);
\draw (2,0) -- (3,1.5);
\draw (3,0) -- (3,1.5);
\draw (4,0) -- (4,1.5);
\foreach \i in {0,...,4}
{
	\fill (\i, 0) circle (0.2);
}
\foreach \i in {0,...,4}
{
        \fill (\i, 1.5) circle (0.2);
}
\end{tikzpicture}
&
\begin{tikzpicture}[scale=0.5]
\draw (0,0) -- (0,1.5);
\draw (0,0) -- (1,1.5);
\draw (1,0) -- (1,1.5);
\draw (2,0) -- (2,1.5);
\draw (2,0) -- (3,1.5);
\draw (4,0) -- (4,1.5);
\draw (5,0) -- (5,1.5);
\foreach \i in {0,...,2}
{
	\fill (\i, 0) circle (0.2);
}
\fill (4,0) circle (0.2);
\fill (5,0) circle (0.2);
\foreach \i in {0,...,5}
{
        \fill (\i, 1.5) circle (0.2);
}
\end{tikzpicture}
&
\begin{tikzpicture}[scale=0.5]
\draw (0,0) -- (0,1.5);
\draw (0,0) -- (1,1.5);
\draw (1,0) -- (1,1.5);
\draw (2,0) -- (2,1.5);
\draw (2,0) -- (3,1.5);
\draw (4,0) -- (4,1.5);
\draw (5,0) -- (4,1.5);
\foreach \i in {0,...,2}
{
	\fill (\i, 0) circle (0.2);
}
\fill (4,0) circle (0.2);
\fill (5,0) circle (0.2);
\foreach \i in {0,...,4}
{
        \fill (\i, 1.5) circle (0.2);
}
\end{tikzpicture}
&
\begin{tikzpicture}[scale=0.5]
\draw (0,0) -- (0,1.5);
\draw (0,0) -- (1,1.5);
\draw (1,0) -- (1,1.5);
\draw (2,0) -- (2,1.5);
\draw (3,0) -- (2,1.5);
\draw (4,0) -- (4,1.5);
\draw (5,0) -- (4,1.5);
\foreach \i in {0,...,5}
{
	\fill (\i, 0) circle (0.2);
}
\foreach \i in {0,...,2}
{
        \fill (\i, 1.5) circle (0.2);
}
\fill (4,1.5) circle (0.2);
\end{tikzpicture}
\\[8pt]
\begin{tikzpicture}[scale=0.5]
\draw (0,0) -- (0,1.5);
\draw (0,0) -- (1,1.5);
\draw (1,0) -- (1,1.5);
\draw (1,0) -- (2,1.5);
\draw (2,0) -- (2,1.5);
\draw (2,0) -- (3,1.5);
\draw (4,0) -- (4,1.5);
\foreach \i in {0,...,2}
{
	\fill (\i, 0) circle (0.2);
}
\fill (4,0) circle (0.2);
\foreach \i in {0,...,4}
{
        \fill (\i, 1.5) circle (0.2);
}
\end{tikzpicture}
&
\begin{tikzpicture}[scale=0.5]
\draw (0,0) -- (0,1.5);
\draw (1,0) -- (0,1.5);
\draw (1,0) -- (1,1.5);
\draw (2,0) -- (1,1.5);
\draw (2,0) -- (2,1.5);
\draw (3,0) -- (2,1.5);
\draw (4,0) -- (4,1.5);
\foreach \i in {0,...,4}
{
	\fill (\i, 0) circle (0.2);
}
\foreach \i in {0,...,2}
{
        \fill (\i, 1.5) circle (0.2);
}
\fill (4,1.5) circle (0.2);
\end{tikzpicture}
&
\begin{tikzpicture}[scale=0.5]
\draw (0,0) -- (0,1.5);
\draw (0,0) -- (1,1.5);
\draw (1,0) -- (1,1.5);
\draw (1,0) -- (2,1.5);
\draw (2,0) -- (2,1.5);
\draw (3,0) -- (3,1.5);
\draw (3,0) -- (4,1.5);
\foreach \i in {0,...,3}
{
	\fill (\i, 0) circle (0.2);
}
\foreach \i in {0,...,4}
{
        \fill (\i, 1.5) circle (0.2);
}
\end{tikzpicture}
&
\begin{tikzpicture}[scale=0.5]
\draw (0,0) -- (0,1.5);
\draw (0,0) -- (1,1.5);
\draw (1,0) -- (1,1.5);
\draw (1,0) -- (2,1.5);
\draw (2,0) -- (2,1.5);
\draw (3,0) -- (3,1.5);
\draw (4,0) -- (3,1.5);
\foreach \i in {0,...,4}
{
	\fill (\i, 0) circle (0.2);
}
\foreach \i in {0,...,3}
{
        \fill (\i, 1.5) circle (0.2);
}
\end{tikzpicture}
\\[8pt]

\begin{tikzpicture}[scale=0.5]
\draw (0,0) -- (0,1.5);
\draw (0,0) -- (1,1.5);
\draw (1,0) -- (1,1.5);
\draw (1,0) -- (2,1.5);
\draw (3,0) -- (3,1.5);
\draw (3,0) -- (4,1.5);
\draw (4,0) -- (4,1.5);
\foreach \i in {0,...,1}
{
	\fill (\i, 0) circle (0.2);
}
\foreach \i in {3,...,4}
{
	\fill (\i, 0) circle (0.2);
}
\foreach \i in {0,...,4}
{
        \fill (\i, 1.5) circle (0.2);
}
\end{tikzpicture}
&
\begin{tikzpicture}[scale=0.5]
\draw (0,0) -- (0,1.5);
\draw (1,0) -- (0,1.5);
\draw (1,0) -- (1,1.5);
\draw (2,0) -- (1,1.5);
\draw (3,0) -- (3,1.5);
\draw (3,0) -- (4,1.5);
\draw (4,0) -- (4,1.5);
\foreach \i in {0,...,4}
{
	\fill (\i, 0) circle (0.2);
}
\foreach \i in {0,...,1}
{
        \fill (\i, 1.5) circle (0.2);
}
\foreach \i in {3,...,4}
{
        \fill (\i, 1.5) circle (0.2);
}
\end{tikzpicture}
&
\begin{tikzpicture}[scale=0.5]
\draw (0,0) -- (0,1.5);
\draw (0,0) -- (1,1.5);
\draw (1,0) -- (1,1.5);
\draw (1,0) -- (2,1.5);
\draw (2,0) -- (2,1.5);
\draw (2,0) -- (3,1.5);
\draw (3,0) -- (3,1.5);
\foreach \i in {0,...,3}
{
	\fill (\i, 0) circle (0.2);
}
\foreach \i in {0,...,3}
{
        \fill (\i, 1.5) circle (0.2);
}
\end{tikzpicture}
&
\end{tabular}

\bigskip\caption{The 31 possible coincidence graphs}
\label{tab:allgraphs}
\end{table}

For each graph $G$, we used the computer algebra system
\texttt{Macaulay 2} \cite{M2} to
compute the appropriate generic determinant $D_G$
and to verify that it is not identically zero.
It follows that in each case, $D_G$ is nonzero on a dense open subset
of  $\RR^{m \times 2} \times \RR^{2 \times n}$, the complement of an
algebraic hypersurface. Since
$R_{2,m}$ and $R_{2,n}'$ are full-dimensional subsets of $\RR^{m \times 2}$ and $\RR^{2
  \times n}$, we conclude that $D_G$ is also nonzero on a dense open
subset $U_G$ of $R_{2,m} \times R_{2,n}'$. We now let $U_{n,m} := \bigcap_{G}U_G$,
where the intersection is taken over the 31 coincidence graphs
$G$. For every pair $(M,M')$ in $U_{n,m}$,
every full-rank vertex of $\Hom(P,Q)$ is simple.
\end{proof}

\begin{corollary}
For every $m,n\ge3$ there is a dense open subset $V_{m,n}$ of $R_{2,m} \times R_{2,n}'$ such
that for $n$ approaching infinity and fixed $m$, the ratio of simple
vertices to all vertices in $\Hom(P,Q)$ tends to one under that
condition $(M,M') \in V_{m,n}$.
\end{corollary}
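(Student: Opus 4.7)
Take $V_{m,n}$ to be the dense open subset of $U_{m,n}$ consisting of those pairs $(M,M')$ for which the polygon $P(M)$ has no pair of parallel edges. By Theorem~\ref{thm:generic}, for $(M,M')\in V_{m,n}$ every rank-two vertex of $\Hom(P,Q)$ is simple, so all non-simple vertices of $\Hom(P,Q)$ have rank at most one. The rank-zero vertex maps are the $n$ constant maps onto vertices of $Q$, and by Corollary~\ref{rank1}(4) there are exactly $mn(n-1)$ rank-one vertex maps (since $P$ has $m$ edges and no parallel pairs). Thus the total number of non-simple vertices of $\Hom(P,Q)$ is bounded by $n+mn(n-1)=O_m(n^2)$.

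It therefore suffices to exhibit $\omega(n^2)$ further vertices of $\Hom(P,Q)$. I would construct $\Omega(n^3)$ simple vertex maps as follows. Fix three affinely independent vertices $v_{i_1},v_{i_2},v_{i_3}$ of $P$, and for each ordered triple $(w_{j_1},w_{j_2},w_{j_3})$ of distinct vertices of $Q$ consider the unique affine map $f\colon\aff(P)\to\aff(Q)$ with $f(v_{i_a})=w_{j_a}$. If $f\in\Hom(P,Q)$, then the six facets $H(v_{i_a},F)$, as $F$ ranges over the two edges of $Q$ incident to $w_{j_a}$, are all active at $f$; hence $f\in\vertex(\Hom(P,Q))$, and by Theorem~\ref{thm:generic} this rank-two vertex is automatically simple. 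To produce $\Theta(n^3)$ valid triples I would restrict to those $(w_{j_1},w_{j_2},w_{j_3})$ whose indices all lie in a common cyclic arc of $\partial Q$ of size $\lfloor\lambda n\rfloor$, with $\lambda=\lambda(M,M')>0$ a positive constant chosen small enough that $\operatorname{diam}(f(P))$ is smaller than the inradius of $Q$; convexity of $Q$, together with an elementary operator-norm bound on the linear part of $f$ (which contracts distances by a factor $O(\lambda)$), then guarantees $f(P)\subseteq Q$.

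Combining the two bounds, the ratio of simple vertices to all vertices of $\Hom(P,Q)$ is at least $1-O_m(n^2)/\Omega_{M,M'}(n^3)=1-O_{M,M'}(1/n)$, which tends to~$1$ as $n\to\infty$. The chief obstacle is the geometric lower bound on the number of valid triples: the window size $\lfloor\lambda n\rfloor$ has to grow linearly in $n$ (so that $\lambda$ is bounded below by a positive constant) for the count to be cubic, and the admissible value of $\lambda$ depends on both the shape of $P$ and on the inradius-to-diameter ratio of $Q(M')$, so the argument delivers a pointwise statement in $(M,M')\in V_{m,n}$; one would shrink $V_{m,n}$ further by an affine-invariant ``bounded geometry'' condition on $Q(M')$ if one wanted the $\Omega(n^3)$ bound to hold uniformly in $V_{m,n}$.
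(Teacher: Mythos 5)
Your choice of $V_{m,n}$ and your use of Theorem~\ref{thm:generic} to dispose of the rank-two vertices match the paper, but the rest of the argument diverges and contains a genuine gap. The paper's key observation, which you miss, is that most rank-one vertices are themselves \emph{simple}: a rank-one vertex map whose image is a proper diagonal of $Q$ is a face collapse sending an edge of $P$ to one endpoint and a single vertex to the other, so by Proposition~\ref{homproperties}(1) it lies on exactly $2+2+2=6$ facets of the six-dimensional polytope $\Hom(P,Q)$. There are $\Theta(mn^2)$ such maps, while the possibly non-simple vertices (rank zero, and rank one with image an \emph{edge} of $Q$) number only $O(mn)$. Hence the ratio is at least $\frac{n-2}{m^{-1}+n}\to 1$ with no need to count rank-two vertices at all. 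By lumping all $mn(n-1)$ rank-one vertices into the "possibly non-simple" pile you inflate that pile to $O(n^2)$, which forces you into the $\Omega(n^3)$ construction.

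That construction does not work as stated. First, a diameter bound on $f(P)$ does not give $f(P)\subseteq Q$: the image contains the boundary points $w_{j_a}$, and whether the remaining $m-3$ vertices of $P$ land inside $Q$ depends on the \emph{directions} of their images relative to the edge-lines of $Q$ at the $w_{j_a}$, not on the operator norm of the linear part. The paper's own analysis of $\Hom(P_4,P_n)$ illustrates this: for three target vertices forming an acute angle the fourth image point lands outside $P_n$ no matter how small the triangle is, and the valid triples have to be characterized by an angle condition, not a size condition. Second, the uniformity problem you flag at the end is fatal to your fix: the constant $\lambda$ depends on $(M,M')$, and for each $n$ the pair ranges over a different space, so the limit statement needs a bound uniform on $V_{m,n}$; but cutting $R'_{2,n}$ down by a "bounded geometry" (inradius-to-diameter) condition excludes an open set of elongated polygons and so destroys the density of $V_{m,n}$, which is part of the assertion being proved. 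Both problems evaporate once you notice that the type-(3) rank-one vertices are simple and already supply the dominant $\Theta(n^2)$ term.
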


\begin{proof}
We take $V_{m,n}$ to be the dense open subset of $U_{m,n}$ given by
the additional constraint that $P$ has no parallel edges. For fixed $(M,M') \in V_{m,n}$, we
group the vertex maps $f \in \Hom(P,Q)$ into four classes as follows.
\begin{enumerate}
\item $f$ has rank zero; that is, $f$ maps all of $P$ to a
  single vertex of $Q$. Then $f$ is not a simple vertex unless $m=3$.
\item $f$ has rank one and $\textup{im}(f)$ is an edge of
  $Q$. Again $f$ is not simple unless $m=3$.
\item $f$ has rank one and $\textup{im}(f)$ is a proper diagonal
  of $Q$. Then by Corollary~\ref{rank1}(1), $f$ is a face collapse;
  that is, it maps an edge of $P$ to one end of the diagonal and
  (since $P$ has no parallel edges) a vertex to the other end. All other vertices of $P$ are mapped into the interior of $Q$. This means $f$ satisfies exactly six facet inequalities of
  $\Hom(P,Q)$, so it is simple.
\item $f$ has rank two. Then $f$ is simple by
  Theorem~\ref{thm:generic}.
\end{enumerate}

By an easy refinement of Corollary~\ref{rank1}(4), we see that there are
$2mn$ vertex maps of type (2), and $mn(n-2)$ of type (3). There are
also $n$ of type (1). Let $k$ be the number of vertex maps of type
(4), which may depend on the particular choice of $(M,M') \in
V_{m,n}$. Regardless of this choice, the ratio of simple
to total vertex maps is (if $m>3$) at least
$$ \frac{mn(n-2)+k}{n+2mn+mn(n-2)+k} \geq \frac{mn(n-2)}{n+mn^2} =
\frac{n-2}{m^{-1}+n}, $$
which tends to one as $n$ tends to infinity.
\end{proof}


We now return to the case of arbitrary dimension, where we make the
following conjecture which would generalize
Theorem~\ref{thm:generic}.

\begin{conjecture}
For any dimensions $d$ and $e$ and any integers $m \geq d+1$ and $n
\geq e+1$, there is a dense open subset $U_{d,e,m,n}$ of $R_{d,m}
\times R'_{e,n}$ such that if $(M,M') \in U_{d,e,m,n}$, then every
vertex map $f:P(M) \to Q(M')$ of full rank is a simple vertex of
$\Hom(P,Q)$.
\end{conjecture}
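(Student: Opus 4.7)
The plan is to generalize the strategy of Theorem~\ref{thm:generic}. Set $N:=de+e=\dim\Hom(P,Q)$ and parametrize an affine map $f\colon\RR^d\to\RR^e$ by its $N$ coefficients. By Proposition~\ref{homproperties}(1), each facet of $\Hom(P(M),Q(M'))$ corresponds to a pair $(v_j,F_i)$ with $v_j$ a column of $M$ and $F_i$ the hyperplane cut out by the $i$th row of $M'$; its defining equation reads $(M'_i)\cdot f(M_j)=1$ and is linear in the $N$ coefficients of $f$, with coefficients that are bilinear monomials in the entries of $M$ and $M'$. A vertex $f$ is simple iff exactly $N$ facet equations hold at $f$ and they are linearly independent; non-simplicity of a full-rank vertex means that some $N+1$ distinct facet-pairs $(v_{j_k},F_{i_k})$ satisfy a linear dependency, i.e.\ the $(N+1)\times(N+1)$ augmented matrix $A$ (coefficient columns together with a constant column of $-1$'s) is singular.

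Each configuration of $N+1$ facet-pairs is encoded by a bipartite multigraph $G$ on $\vertex(P)\sqcup\FF(Q)$ with $N+1$ edges, as in Section~\ref{generic}. Analogues of Observation~\ref{obs:whichgraphs} constrain the admissible graphs: no $\vertex(P)$-node may have degree exceeding $e$ (since $f(v)$ can lie on at most $e$ generic facets of $Q$), no $\FF(Q)$-node may have degree exceeding $d$ (since $d{+}1$ affinely independent vertices of $P$ cannot share an image hyperplane unless $\rank f<d$), and short cycles force pairs of images to coincide, contradicting full rank or forcing $Q$ to have too small a combinatorial structure. For each combinatorial type $G$ surviving these reductions, the non-simplicity condition $\det(A_G)=0$ defines a semi-algebraic hypersurface in $R_{d,m}\times R'_{e,n}$; writing $U_G$ for its complement, we take $U_{d,e,m,n}:=\bigcap_G U_G$. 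Since there are only finitely many $G$ to consider (bounded by $\binom{mn}{N+1}$), this intersection is dense and open \emph{provided} each $U_G$ is nonempty, i.e.\ provided each $\det(A_G)$ is not identically zero as a polynomial in the entries of $(M,M')$.

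The main obstacle is exactly this non-vanishing verification. In dimension two it was carried out case-by-case for the 31 surviving graphs with \texttt{Macaulay 2}; for $d,e\geq 3$ the combinatorial explosion makes such enumeration infeasible. A uniform route would be to exhibit a single explicit witness $(M_0,M'_0)$ — perhaps built inductively from products and joins via Proposition~\ref{homproperties}(3,5), reducing to the planar case already settled by Theorem~\ref{thm:generic}, or assembled from a symmetric family such as cyclic polytopes on the $P$-side and their duals on the $Q$-side — for which all the relevant $\det(A_G)$ simultaneously do not vanish. An alternative is a conceptual dimension count on stratifications of $R_{d,m}\times R'_{e,n}$ showing that the ``bad locus'' where some $\det(A_G)$ vanishes is always of positive codimension. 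Producing such a universal witness, or such a stratification argument, is the essential difficulty preventing a direct generalization of the polygon proof.
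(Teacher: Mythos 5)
The statement you are addressing is stated in the paper as a \emph{conjecture}: the authors offer no proof, only the remark that the graph-enumeration strategy of Theorem~\ref{thm:generic} ``does not appear to depend on the dimensions'' but that ``some of the conditions in Observation~\ref{obs:whichgraphs} do not apply in the more general setting, and the number of graphs that must be checked grows very rapidly.'' Your proposal reproduces exactly this intended strategy and, to your credit, explicitly identifies the unproven core: you never establish that the generic determinants $\det(A_G)$ are not identically zero for the surviving configurations, and you only gesture at possible routes (a universal witness $(M_0,M_0')$, or a stratification/dimension count). Since that non-vanishing is the entire content of the hard step — in the planar case it consumed a case-by-case \texttt{Macaulay 2} computation over 31 graphs — what you have written is a proof outline with the decisive step missing, i.e.\ a restatement of why the conjecture is plausible rather than a proof of it.

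There is also a concrete error in your combinatorial reductions. You claim that ``short cycles force pairs of images to coincide, contradicting full rank.'' That is true only when $e=2$: a $4$-cycle in $G$ means two vertices of $P$ map into the intersection of two facet hyperplanes of $Q$, which for $e\ge 3$ is a codimension-two affine flat, not a point, so the two images need not coincide and no rank obstruction follows. Similarly the $6$-cycle argument in Observation~\ref{obs:whichgraphs}(4) uses that two edge lines of a polygon meet in a vertex, which has no analogue in higher dimension. This is precisely the dimension-dependence the authors warn about, so even the pruning of admissible graphs — let alone the determinant verification — is not settled by your argument. The conjecture remains open, and your proposal should be read as a research plan, not a proof.
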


The motivation is that the main idea of the proof of
Theorem~\ref{thm:generic}, which is to use a finite list of graphs to
enumerate the situations under which a vertex of $\Hom(P(M),Q(M)$ might
\emph{not} be simple and then to show that each is equivalent to the
nonvanishing of a certain generic determinant, does not appear to depend
on the dimensions of the source and target polytopes. However, some of
the conditions in Observation~\ref{obs:whichgraphs} do not apply in
the more general setting, and the number of graphs that must be
checked grows very rapidly with $d$ and $e$.




\section{Regular polygons}\label{regular}
We now consider the hom-polytopes of pairs $(P_m,P_n)$ of regular
polygons. There do not appear to be general descriptions of all
vertices of $\Hom(P_m,P_n)$, or even of $\Hom(P_n,P_n)$, so we only
consider special cases. Again the main goal is to understand
the set of vertices. In fact we need only consider those of full rank,
since we have the following immediate consequence of
Corollary~\ref{rank1}(4).

\begin{corollary} \label{cor:rank1regular}
The number of rank zero vertices of $\Hom(P_m,P_n)$ is $n$ and the
number of rank one vertices is
$$ \left\{
	\begin{array}{ll}
		mn(n-1) & \textup{ for } m \textup{ odd, } \\
	\frac{mn(n-1)}{2} & \textup{ for } m \textup{ even. } \\
	\end{array}
\right. $$
\end{corollary}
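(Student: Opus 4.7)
The strategy is to handle rank zero and rank one separately, reducing each to earlier results in the paper.

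For rank zero, I would note that a rank zero map $f\colon P_m\to P_n$ is a constant map, determined by its image point $q\in P_n$. Applying $(\pc_1)$, I would test when such an $f$ is a vertex by considering an arbitrary affine $1$-family $(f_t)_{(-1,1)}\subset\Hom(P_m,P_n)$ with $f_0=f$. Writing $f_t=f_0+t\,g$ for some affine map $g\colon P_m\to\lin(P_n)$, the condition $f_t(P_m)\subset P_n$ for all $t$ in a neighborhood of $0$ forces $g(P_m)$ to lie in the linearity space of the minimal face of $P_n$ through $q$. This space is trivial precisely when $q\in\vertex(P_n)$, which gives exactly $n$ rank zero vertex maps.

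For rank one, the goal is a direct application of Corollary \ref{rank1}(4) with $P=P_m$ and $Q=P_n$. The regular $m$-gon has $l=m$ edges, so I only need to count the number of pairs of parallel edges. If $m$ is odd, no two edges of $P_m$ are parallel (the edge opposite to any given vertex is a single edge, while the edge opposite to any given edge is a vertex), so the number of parallel pairs is $0$. If $m$ is even, then each edge is parallel to exactly one other edge (its antipodal edge under the central symmetry), yielding $m/2$ parallel pairs. Substituting $(l-\text{pairs})\,n(n-1)$ gives $mn(n-1)$ for $m$ odd and $\tfrac{mn(n-1)}{2}$ for $m$ even.

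There is no serious obstacle: the only nontrivial step is the rank zero argument, and even there the key observation -- that only perturbations fixing the image direction at $q$ are allowed -- is an immediate consequence of $(\pc_1)$ applied to constant maps. The rank one count is essentially bookkeeping once Corollary \ref{rank1}(4) is invoked, with the parity split coming from the elementary fact about parallel edges in a regular polygon.
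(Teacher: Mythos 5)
Your proof is correct and follows the same route the paper intends: the paper states this as an immediate consequence of Corollary \ref{rank1}(4), and your rank-one count via the number of parallel edge pairs ($0$ for $m$ odd, $m/2$ for $m$ even) is exactly that computation. The rank-zero argument, which the paper leaves implicit, is also handled correctly by your perturbation observation at a vertex of $P_n$.
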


For any positive integer $k \geq 3$, let $\ZZ_k$ be the cyclic group of
order $k$ and let $D_k$ be the dihedral group of order $2k$. Then
$D_k$ acts naturally on $P_k$ by rotation and reflection, with $\ZZ_k$
identified with the subgroup of rotations inside $D_k$. In particular, this induces an
action of $D_m \times D_n$ on $\Hom(P_m, P_n)$. The action respects
faces of each dimension. The action of the subgroup $\ZZ_m \times \ZZ_n$
is free and transitive on the $mn$ facets of $\Hom(P_m,P_n)$. Although
the action on the vertices is more subtle, it is still useful in
enumerating them. We begin with a sample result that holds for
all $m$ and $n$ and serves as a mild check on the experimental data we
will later present.

\begin{proposition} \label{prop:divisibility} Fix $m$ and $n$ and let
$V$ be the number of vertices of $\Hom(P_m,P_n)$. Then
\begin{enumerate}
\item $n$ divides $V$,
\item $m$ divides $V-n$, and
\item $n$ and $V$ have the same parity.
\end{enumerate}
\end{proposition}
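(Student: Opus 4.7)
My plan is to exploit the natural action of $D_m \times D_n$ on $\Hom(P_m, P_n)$ given by $(\sigma, \tau) \cdot f = \tau \circ f \circ \sigma^{-1}$. Because this action preserves the polytopal structure, it permutes the vertex set. All three statements will follow from stabilizer and fixed-point calculations for suitable subgroups of this symmetry group.

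For part (1), I will show that $\ZZ_n \subset D_n$ acts freely on $\vertex(\Hom(P_m, P_n))$ via post-composition. If $\tau \in \ZZ_n$ is nontrivial and $\tau \circ f = f$ for a vertex $f$, then every point of $\Im(f)$ is a fixed point of $\tau$; but a nontrivial rotation of $\CC$ has a unique fixed point, namely the center of $P_n$. Hence $f$ is the constant map to this center, which lies in $\int(\Hom(P_m, P_n))$ by Proposition~\ref{homproperties}(2) and is therefore not a vertex, a contradiction. So every $\ZZ_n$-orbit of vertices has size $n$, giving $n \mid V$.

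For part (2), I use the pre-composition action of $\ZZ_m$. If a nontrivial rotation $\sigma \in \ZZ_m$ satisfies $f \circ \sigma = f$, writing $f(x) = Lx + b$ with $L$ linear yields $L(\sigma - I) = 0$; a short computation gives $\det(\sigma - I) = 2(1 - \cos\theta) \neq 0$ for every nontrivial rotation, so $L = 0$ and $f$ is constant. Consequently, the $n$ rank-$0$ vertex maps (constants at the vertices of $P_n$) form $n$ singleton $\ZZ_m$-orbits, while the remaining $V - n$ rank-$\geq 1$ vertices split into free $\ZZ_m$-orbits of size $m$, giving $m \mid V - n$.

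For part (3), the case $n$ even is immediate from (1). For $n$ odd, I fix a reflection $\rho \in D_n$ and consider the involution $f \mapsto \rho \circ f$ on the vertex set, so that $V \equiv |\mathrm{Fix}(\rho)| \pmod 2$. A fixed vertex $f$ has $\Im(f)$ contained in the reflection axis $\ell$, hence in the chord $I = \ell \cap P_n$. For $n$ odd, $I$ runs from a unique vertex $v^* \in \vertex(P_n)$ to the midpoint of the opposite edge. A rank-$2$ vertex is excluded since $\Im(f)$ cannot fit inside the one-dimensional $I$; a rank-$1$ vertex is excluded because Theorem~\ref{vertexcomposite}(1) together with the description of injective vertex maps from a segment (as used in the proof of Corollary~\ref{rank1}(3)) forces the endpoints of $\Im(f)$ to be two distinct vertices of $P_n$ inside $I$, whereas only $v^*$ is available. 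Thus only the rank-$0$ constant map to $v^*$ survives, giving $|\mathrm{Fix}(\rho)| = 1$ and $V$ odd, matching $n$. The main obstacle is getting (3) cleanly right: one must combine the geometric fact that a reflection axis of an odd-gon contains exactly one vertex with the structural characterization of rank-$1$ vertex maps from Corollary~\ref{rank1}; the other ingredients are routine.
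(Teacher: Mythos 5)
Your proposal is correct and follows essentially the same route as the paper: all three parts are obtained by orbit-counting for the action of subgroups of $D_m\times D_n$ on $\vertex(\Hom(P_m,P_n))$, with (1) using freeness of the $\ZZ_n$ post-composition action, (2) using that nontrivial rotations in $\ZZ_m$ fix only the rank-zero vertices, and (3) using a reflection in $D_n$ whose only fixed vertex is a single constant map when $n$ is odd. Your extra details (the determinant computation in (2) and the appeal to Corollary~\ref{rank1} in (3)) merely flesh out steps the paper handles by citing $(\pc_1)$ directly.
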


\begin{proof} All of these statements follow from applying the
orbit-stabilizer theorem to the action of various subgroups of $D_m
\times D_n$ on $\vertex(\Hom(P_m,P_n))$.

\medskip\noindent(1) The action of any nontrivial rotation in $\ZZ_n$ on
  $\Hom(P_m,P_n)$ fixes only one affine map: the zero map. The zero map is not a vertex
of $\Hom(P_m,P_n)$, so the action is free on the vertices.

\medskip\noindent(2) The action of any nontrivial rotation in $\ZZ_m$ on
$\Hom(P_m,P_n)$ fixes only vertex maps of rank zero.  Thus $m$ divides $V-n$.

\medskip\noindent(3) If $n$ is even, this is immediate from (1).

If $n$ is odd, then a reflection $s \in D_n$ fixes only the
line segment from one vertex of $P_n$ to the midpoint of the opposite
edge. Thus the only maps in $\Hom(P_m, P_n)$ that are fixed by $s$ are
the maps that take $P_m$ to that segment. But then, by the perturbation criterion $(\pc_1)$, the only vertex of $\Hom(P_m,P_n))$ that is fixed by $s$ is the map that takes all of $P_m$ to the mentioned vertex of $P_n$. It follows that $2$ divides $V-1$, so $V$ is odd.
\end{proof}

\begin{proposition}
For any $n \geq 3$, $\Hom(P_3,P_n) = P_n^3$. In particular, the number
of full-rank vertices is $n(n-1)(n-2)$.
\end{proposition}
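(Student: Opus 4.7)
The plan is to reduce the statement to an already-established corollary and then count ranks directly.

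First I would observe that the regular triangle $P_3$ is affinely isomorphic to the standard $2$-simplex $\Delta_2$, and affine isomorphisms on the source of a hom-polytope induce isomorphisms of the hom-polytope itself (the functor $\Hom(-,P_n)$ sends isomorphisms to isomorphisms). So $\Hom(P_3,P_n)\cong\Hom(\Delta_2,P_n)$. By Corollary \ref{explicitcomputations}(1) (with $n=2$ and $P=P_n$), we have $\Hom(\Delta_2,P_n)\cong P_n^3$. This immediately gives the asserted isomorphism; no perturbation/sliding machinery is required.

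Next I would identify the vertices. Under the isomorphism $\Hom(\Delta_2,P_n)\cong P_n^3$, an affine map $f:\Delta_2\to P_n$ is sent to the triple $(f(e_1),f(e_2),f(e_3))\in P_n^3$, since an affine map out of a simplex is determined by, and can be freely specified on, its vertices. Therefore $\vertex(\Hom(P_3,P_n))$ corresponds bijectively to $\vertex(P_n)^3$, which has cardinality $n^3$.

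For the full-rank count, I would note that the rank of $f$ equals the affine dimension of $\conv(f(e_1),f(e_2),f(e_3))$. Because the vertices of a regular $n$-gon are in strictly convex position, no three of them are collinear; hence $f$ has full rank $2$ if and only if the three chosen images are pairwise distinct. The number of such ordered triples is $n(n-1)(n-2)$, as claimed.

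There is no real obstacle here; the only thing to be careful about is invoking the correct indexing convention in Corollary \ref{explicitcomputations}(1) (the simplex $\Delta_n$ there has $n+1$ vertices, so one must plug in $n=2$ to obtain $P_n^3$ rather than $P_n^4$), and the elementary geometric fact that vertices of a regular polygon are in convex (in particular, no-three-collinear) position.
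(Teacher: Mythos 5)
Your proof is correct and follows essentially the same route as the paper's: the first claim is exactly Corollary \ref{explicitcomputations}(1) applied to $P_3\cong\Delta_2$, and the full-rank count comes from observing that a vertex map is full rank precisely when the three vertices of the triangle go to distinct (hence non-collinear) vertices of $P_n$. You simply spell out the details (the indexing in the corollary and the no-three-collinear fact) that the paper leaves implicit.
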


\begin{proof}
The first statement is a special case of
Corollary~\ref{explicitcomputations}(1). For the second, note that
a map $f$ is a full rank vertex of $\Hom(P_3,P_n)$ if and only if
it sends each vertex of $P_3$ to a distinct vertex of $P_n$.
\end{proof}

\begin{proposition}
The number of full rank vertices of $\Hom(P_n, P_3)$ is
$$ \left\{
	\begin{array}{ll}
		\frac{n(n+1)(n-1)}{4} & \textup{ for } n \textup{ odd } \\
	\frac{n(n-2)(n-4)}{4} & \textup{ for } n \textup{ even } \\
	\end{array}
\right. $$
\end{proposition}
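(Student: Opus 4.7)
The plan is to establish a bijection between full-rank vertex maps $f\in\Hom(P_n,P_3)$ and pairs consisting of a triangle $T$ that encloses $P_n$ with each side supported by an edge of $P_n$, together with one of six labelings of $T$'s vertices by those of $P_3$. A full-rank affine map $f\colon P_n\to P_3$ extends uniquely to an affine automorphism of $\RR^2$, so $T:=f^{-1}(P_3)$ is a triangle containing $P_n$; conversely, any such $T$ together with a labeling recovers $f$. Under this correspondence each facet condition $f(v_i)\in e_j$ of Proposition~\ref{homproperties}(1) translates into $v_i$ lying on the side $f^{-1}(e_j)$ of $T$.

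The key geometric step is to show that for a vertex $f$, each side of $T$ contains an edge of $P_n$. Let $a$ denote the number of vertices of $P_n$ lying on a side-interior of $T$ and $b$ the number at a vertex of $T$; by Proposition~\ref{homproperties}(1), $f$ is a vertex iff $a+2b\ge 6$ (with independence of active constraints). Since no three vertices of the regular $n$-gon are collinear, each side of $T$ carries at most two vertices of $P_n$, so summing over the three sides yields $a+2b\le 6$. Equality must therefore hold, each side carries exactly two $P_n$-vertices, and a supporting-line argument forces these two to be adjacent in $P_n$: a chord through two non-adjacent vertices would be a proper diagonal of $P_n$, placing part of $P_n$ on each side of the $T$-side and contradicting $T\supseteq P_n$. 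In particular all full-rank vertex maps are simple, and the three sides of a vertex triangle are precisely the extensions of three edges of $P_n$.

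Counting vertex triangles then reduces to counting 3-element subsets of $\{E_1,\ldots,E_n\}$ whose extended lines form a bounded triangle containing $P_n$. The outward normal of $E_k$ points at angle $(2k-1)\pi/n$, and the three extended lines enclose $P_n$ (in the bounded region) exactly when no open half-plane contains all three normals. Recording the chosen subset by its cyclic gaps $(g_1,g_2,g_3)$ with $g_1+g_2+g_3=n-3$, this is the condition $(g_i+1)\cdot 2\pi/n<\pi$, i.e., $g_i\le(n-3)/2$ for $n$ odd and $g_i\le(n-4)/2$ for $n$ even; the strict inequality simultaneously rules out antipodal (parallel) pairs.

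A short inclusion--exclusion shows the number of valid compositions is $(n^2-1)/8$ for odd $n$ and $(n-2)(n-4)/8$ for even $n$. Each 3-subset is realized by exactly three (composition, starting edge) pairs (one per choice of ``first'' chosen edge together with the corresponding cyclic rotation of the composition), so the number of vertex triangles equals $n/3$ times the composition count, giving $n(n^2-1)/24$ or $n(n-2)(n-4)/24$. Multiplying by the six labelings of $T$'s vertices yields the claimed $n(n-1)(n+1)/4$ and $n(n-2)(n-4)/4$. The main obstacle is the geometric argument of the second paragraph; once each side of a vertex triangle is known to be supported by an edge of $P_n$, the remainder is an elementary count of cyclic compositions.
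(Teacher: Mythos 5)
Your proposal is correct and follows essentially the same route as the paper: both reduce the problem to showing that a full-rank vertex map sends exactly two (necessarily adjacent) vertices of $P_n$ onto each edge of $P_3$, hence corresponds to a triangle circumscribed about $P_n$ along three of its edges, then count admissible edge-triples via the cyclic-gap condition (your normal-vector formulation and inclusion--exclusion over compositions give the same counts $\binom{p+1}{2}$ and $\binom{p-1}{2}$ per starting edge as the paper's direct enumeration) and multiply by the $3!$ symmetries of $P_3$. The only differences are presentational.
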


\begin{proof}
Suppose $f$ is a rank two vertex of $\Hom(P, P_3)$ for any polygon
$P$. Then $f$ cannot send more than two vertices to any edge of
$P_3$. Since it lies at the intersection of at least six different
facets of $\Hom(P,P_3)$, it must send exactly two vertices of $P$ to
each edge of $P_3$. That is, it inscribes an affine image of $P$ into
the equilateral $P_3$. But since all triangles in the plane are
affinely isomorphic, this is equivalent to circumscribing an arbitrary
triangle around $P$ itself.

Now specialize to $P=P_n$ and let its edges be $E_0,E_1,\dots,E_{n-1}$
in consecutive order. The condition that a triangle can be drawn
around $P_n$ along three edges $E_i, E_j, E_k$ is that the gap between
each pair of indices, taken cyclically, is at most $\lfloor
(n-1)/2 \rfloor.$ We count such triples separately in the cases where
$n$ is odd and where it is even.

Suppose $n=2p+1$ is odd. Consider just the triples $0=i<j<k$. Then we
must have
$$1 \leq j \leq p,\; k-j \leq p,\; (2p+1)-k \leq p.$$
That is, for $j$ fixed we must have $p+1 \leq k \leq p+j$, giving
$\sum_{j=1}^p j = {p+1 \choose 2}$ triples with $i=0$. Since
$\frac{3}{n}$ of the allowed triples include 0, the total number of triples is
\begin{eqnarray*}\frac{n}{3}{p+1 \choose 2} & = &
  \frac{(p+1)p(2p+1)}{6} \\
& = & \frac{(2p+2)2p(2p+1)}{24} \\
& = & \frac{(n+1)(n-1)n}{24}.
\end{eqnarray*}
Similarly if $n=2p$ is even, the size of each gap must be at most
$p-1$. A similar calculation gives the formula
$\frac{n(n-2)(n-4)}{24}$ for the number of triples of edges in this
case.

(Note that these formulae appear without proof as sequence A060422
in~\cite{Sloane}, listed as the number of triples of \emph{vertices} of
a regular $n$-gon that form acute triangles. By passing to the dual
polygons with respect to 0, it is not hard to see that this is equivalent.)

Finally, once we have chosen the three edges $E_i,E_j,E_k$ of $P_n$,
we can apply an arbitrary symmetry of $P_3$, so we multiply the formulae above by $3! =
6$ to obtain the total number of rank two vertices of $\Hom(P_n,P_3)$.
\end{proof}

\begin{proposition}
The number of vertices of $\Hom(P_m,P_4)$ is
$$ \left\{	\begin{array}{ll}
	  (2m+2)^2 & \textup{ for } m \textup{ odd } \\
	 (m+2)^2 & \textup{ for } m \textup{ even. } \\
	\end{array} \right. $$

In particular the number of full-rank vertices is  $$ \left\{
\begin{array}{ll}
	  4m^2-4m & \textup{ for } m \textup{ odd } \\
	 m^2-2m &  \textup{ for } m \textup{ even } \\
	\end{array} \right. $$
\end{proposition}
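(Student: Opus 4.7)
The plan is to reduce the problem to the one-dimensional target case. Since $P_4$ is affinely isomorphic to $\Box_2 = [-1,1]\times[-1,1]$, Proposition~\ref{homproperties}(3) gives
\[
\Hom(P_m,P_4)\ \cong\ \Hom(P_m,[-1,1])\times\Hom(P_m,[-1,1]),
\]
and the vertices of a product polytope are exactly the pairs of vertices of the factors. So I would first enumerate $\vertex(\Hom(P_m,[-1,1]))$ and then, among the resulting ordered pairs, count those corresponding to full-rank maps $P_m\to P_4$.

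For the first task, I observe that $\Hom(P_m,[-1,1])$ is $3$-dimensional with $1$-dimensional target, so every vertex has rank $0$ or $1$. The rank $0$ vertices are the two constant maps $f\equiv\pm1$, because by Theorem~\ref{vertexcomposite}(1) any rank $0$ vertex factors through a vertex of $[-1,1]$. The rank $1$ vertices are face collapses by Corollary~\ref{rank1}(1), and Corollary~\ref{rank1}(4) counts them as $(l-p)\cdot n(n-1)$ with $l=m$, $n=2$, and $p$ the number of pairs of parallel edges of $P_m$, so $p=0$ for $m$ odd and $p=m/2$ for $m$ even. This produces $2m$ rank $1$ vertices for $m$ odd and $m$ for $m$ even, hence
\[
|\vertex(\Hom(P_m,[-1,1]))|=\begin{cases}2m+2,&m\text{ odd},\\ m+2,&m\text{ even},\end{cases}
\]
and squaring gives the total vertex count.

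For the full-rank count, the key observation is that the rank of $(f_1,f_2)\colon P_m\to[-1,1]^2$ equals the dimension of the span of the linear parts of $f_1$ and $f_2$, so $(f_1,f_2)$ is full-rank exactly when both $f_i$ are rank $1$ with linearly independent linear parts. To each rank $1$ vertex $f$ I would associate its collapsing system $\Gamma$ from Corollary~\ref{rank1}(2): a single edge when $m$ is odd, a pair of parallel edges when $m$ is even. The kernel of the linear part of $f$ is precisely the common direction of the edges in $\Gamma$. Since the $m$ edge directions of $P_m$ are pairwise distinct for $m$ odd and the $m/2$ parallel-pair directions are pairwise distinct for $m$ even, two rank $1$ vertex maps have linearly dependent linear parts iff they share the same $\Gamma$, and each $\Gamma$ carries exactly two rank $1$ vertices (one for each orientation of the target $[-1,1]$). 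Hence, for every fixed rank $1$ vertex $f_1$, exactly $2$ choices of $f_2$ yield a dependent pair, producing $2m(2m-2)=4m^2-4m$ full-rank pairs for $m$ odd and $m(m-2)=m^2-2m$ for $m$ even.

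The main obstacle is purely organizational: keeping straight the two-fold orientation choice for each collapsing system $\Gamma$ in the two parities. Once Corollaries~\ref{rank1}(1)--(4) are in hand, the rest of the argument is a direct assembly of already established facts together with the product-of-vertices principle.
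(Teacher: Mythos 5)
Your proof is correct and takes essentially the same route as the paper: Proposition~\ref{homproperties}(3) reduces $\Hom(P_m,P_4)$ to $\Hom(P_m,I)\times\Hom(P_m,I)$, and Corollary~\ref{rank1} yields the $2$ rank-zero and $2m$ (resp.\ $m$) rank-one vertices of each factor, whence the squared totals. The only (harmless) divergence is in the full-rank count, which the paper leaves implicit (one can obtain it by subtracting the rank-$0$ and rank-$1$ counts of Corollary~\ref{cor:rank1regular} from the total), while you count full-rank pairs directly via linear independence of the linear parts of the two components; both computations agree.
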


\begin{proof}
By Proposition~\ref{homproperties}(3), $\Hom(P_m,P_4) =
  \Hom(P_m,I) \times \Hom(P_m,I)$. Now $\Hom(P_m,I)$ has two vertices
  of rank zero and (by Corollary~\ref{rank1}) has $2m$ vertices of rank
  one if $m$ is odd or $m$ such vertices if $m$ is even.
\end{proof}

Note that if $m$ is even, we obtain an explicit description of
$\Hom(P_m,I)$ from Corollary~\ref{explicitcomputations}(2):
it is a bipyramid over the dual $m$-gon $P_m^0$. The $f$-vector (i.~e., the vector of the numbers of vertices, edges, $2$-faces etc) of this
bipyramid is $(m+2,3m,2m,1)$. Using the general formula
$$ f_i(P \times Q) = \sum_{j=0}^i f_j(P)f_{i-j}(Q),$$
we compute the $f$-vector of $\Hom(P_m,I)$ to be
$$ (m^2+4m+4,6m^2+12m,13m^2+4m,12m^2+2m+4,4m^2+6m,4m,1).$$

\begin{proposition}
The number of full-rank vertices of $\Hom(P_4,P_n)$ is
$$ \left\{	\begin{array}{ll}
	  n^3-9n & \textup{ for } n \textup{ odd } \\
	 n^3-5n^2+6n & \textup{ for } n \textup{ even } \\
	\end{array} \right. $$

\end{proposition}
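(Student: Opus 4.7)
The plan is to classify full-rank vertex maps $f : P_4 \to P_n$ by the image parallelogram $\Pi = f(P_4)$. Writing the cyclically-ordered vertices of $\Pi$ as $p_1, p_2, p_3, p_4$ with $p_1 + p_3 = p_2 + p_4$, and $c_i \in \{0, 1, 2\}$ for the number of edges of $P_n$ containing $p_i$, Proposition~\ref{homproperties}(1) shows that $f$ lies on $c_1 + \cdots + c_4$ facets of $\Hom(P_4, P_n)$. By the perturbation criterion $(\pc_1)$, $f$ is a vertex iff these facets meet in the single point $f$, which requires $\sum c_i \geq 6 = \dim\Hom(P_4, P_n)$ together with independence of the defining linear constraints. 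The admissible multi-sets are
$(\mathrm{A})~(2,2,2,2)$, $(\mathrm{B})~(2,2,2,1)$, $(\mathrm{C})~(2,2,2,0)$, $(\mathrm{D})~(2,2,1,1)$;
each admissible $\Pi$ is the image of exactly eight vertex maps, one per cyclic labeling of its four vertices, so the count equals $8(N_\mathrm{A} + N_\mathrm{B} + N_\mathrm{C} + N_\mathrm{D})$.

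In type A, $\Pi$ is inscribed in the circumscribed circle of $P_n$, hence a rectangle whose diagonals are diameters of $P_n$. Diameters joining two $P_n$-vertices exist only for $n$ even (there being $n/2$ of them); any two distinct diameters determine a rectangle, giving $N_\mathrm{A} = \binom{n/2}{2}$ for $n$ even and $N_\mathrm{A} = 0$ for $n$ odd, contributing $n(n-2)$ vertex maps in the even case. In types B and C one fixes $p_i = \zeta_n^{a_i}$ for $i = 1, 2, 3$ and computes $p_4 = \zeta_n^{a_1} + \zeta_n^{a_3} - \zeta_n^{a_2}$; expanding in real coordinates and comparing against the defining inequalities of $P_n$ determines whether $p_4$ lies outside $P_n$, at another $P_n$-vertex (absorbed into A), on an edge interior (B), or in $\operatorname{int}(P_n)$ (C). The $D_n$-action reduces this to representatives of $D_n$-orbits of ordered triples $(a_1, a_2, a_3)$, with explicit stabilizers depending on the orbit class; orbit-stabilizer then yields $N_\mathrm{B}$ and $N_\mathrm{C}$. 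Type D subdivides into sub-case Da (the two $P_n$-vertex corners are adjacent in $\Pi$) and sub-case Db (they are diagonal): Da reduces to finding non-zero points of $(\partial P_n - \zeta_n^a)\cap(\partial P_n - \zeta_n^b)$ corresponding to translates of the chord $\zeta_n^a\zeta_n^b$ with both endpoints in edge interiors, while Db reduces to chords of $P_n$ through the midpoint $(\zeta_n^a + \zeta_n^c)/2$ with both endpoints in edge interiors.

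The main obstacle is the independence check demanded by $(\pc_1)$. When the two edge-interior corners $p_3, p_4$ of a type D (or B) configuration sit on \emph{parallel} edges of $P_n$ whose offset matches the V-V chord $p_2 - p_1$ (respectively the analogous quantity for B), the six bounding facet conditions are linearly dependent: the configuration belongs to a $1$-parameter family of maps in $\Hom(P_4, P_n)$ and is not a vertex but lies interior to a higher-dimensional face. Since $P_n$ has parallel edges iff $n$ is even, this obstruction only occurs for $n$ even, and it is precisely what produces the parity split in the final formula. After summing the orbit-stabilizer counts for the types A, B, C, Da, Db and discarding the degenerate configurations for $n$ even, the contributions assemble to $n^3 - 9n$ for $n$ odd and $n^3 - 5n^2 + 6n$ for $n$ even. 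The decompositions for small $n$---for example $N = 1$ (type A alone) when $n = 4$; $N = 5 + 5$ (types C and Da) when $n = 5$; $N = 3 + 6$ (types A and C) when $n = 6$; $N = 21 + 14$ (types C and Da) when $n = 7$---serve as consistency checks.
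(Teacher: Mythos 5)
Your classification is structurally the same as the paper's: your types A/B/C together are the paper's case of a map sending at least three vertices of $P_4$ to vertices of $P_n$, your Da and Db are its cases of two adjacent (resp.\ opposite) vertex--vertex incidences with the remaining corners on edge interiors, and the factor of $8$ per admissible image parallelogram appears in both. The problem is that the proposal stops exactly where the proof has to start: every quantity $N_{\mathrm{A}},\dots,N_{\mathrm{Db}}$ is deferred to an unexecuted computation (``expanding in real coordinates,'' ``orbit-stabilizer then yields,'' ``the contributions assemble to''), and two qualitative facts without which the totals cannot come out right are neither stated nor justified. First, you need $N_{\mathrm{B}}=0$: if three corners of the parallelogram are vertices of $P_n$, the fourth is never in the relative interior of an edge. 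The paper gets this from the trichotomy on the interior angle $\Theta$ at the corner opposite the free one --- $\Theta$ acute puts the fourth corner outside the circumcircle, $\Theta=\pi/2$ forces $n$ even and puts it at the antipodal vertex (absorbed into A), and $\Theta$ obtuse puts it in $\int(P_n)$ by the reflection symmetry of $P_n$ across the perpendicular bisector of the relevant chord. Second, you need $N_{\mathrm{Db}}=0$ for \emph{all} $n$: for the antipodal chord this is the sliding argument you mention, but for a non-antipodal chord the obstruction is not parallelism of edges; it is that the point-reflection through the chord's midpoint of the shorter boundary arc meets $\partial P_n$ only at the chord's endpoints, so the two free corners cannot both reach edges.

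Relatedly, your diagnosis that the odd/even split is ``precisely'' the parallel-edge degeneracy is not accurate. The split comes from three independent sources: type A (rectangles with all four corners at vertices, i.e.\ pairs of diameters) exists only for $n$ even; type Da exists only for $n$ odd, because for $n$ even the chord from a vertex perpendicular to a vertex--vertex chord terminates at another vertex (Thales), so the Da rectangle degenerates into a type A configuration rather than into a non-vertex; and the count of ``obtuse'' triples in type C is itself parity-dependent ($n(n-1)(n-3)$ versus $n(n-2)(n-4)$). Finally, the vertex-verification step you flag as ``the main obstacle'' is handled in the paper by a concrete perturbation argument for Da (the images of the two edge-interior corners would have to trace out parallel segments inside non-parallel edges), and for A and C by the observation at the start of Example~\ref{nonvertexfactorization} that three affinely independent vertices mapping to vertices already force a vertex map; your proposal should supply these arguments rather than gesture at ``independence of the defining linear constraints.'' As written, the consistency checks for small $n$ confirm the answer but do not substitute for the enumeration.
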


\begin{proof}
Let $v_0, v_1, v_2, v_3$ and $w_0, w_1,\dots,w_{n-1}$ respectively be
the vertices of $P_4$ and of $P_n$ in counterclockwise
order, with $v_0 = w_0$. If $f:P_4 \to P_n$ is a vertex map
of rank two, then in order to achieve the necessary six incidences of
vertices of $f(P_4)$ with facets of $P_n$,
one of the following must apply:

\begin{enumerate}
\item $f$ sends at least three vertices of $P_4$ to distinct vertices of
$P_n$, or
\item $f$ sends two adjacent vertices of $P_4$ to distinct
  vertices of $P_n$ and the other two onto interior points of edges,
  or
\item $f$ sends two opposite vertices of $P_4$ to distinct
  vertices of $P_n$ and the other two onto interior points of edges.
\end{enumerate}

In case (1), we first observe that an affine map $f:P_4\to P_n$ which maps three vertices of $P_4$ to vertices of $P_n$ is automatically a vertex map; see the comment at the beginning of Example \ref{nonvertexfactorization}.
We may assume up to symmetry that
$f(v_0)=w_0$, $f(v_1) = w_i$, and $f(v_3)=w_{n-j}$ with $0 < i \leq
j$. We now consider different cases for the
interior angle $\Theta$ of the parallelogram $f(P_4)$ at
$w_0$.

If $\Theta$ is acute, then $f(v_2)$ lies outside the unit circle. In
particular it is not in $P_n$, so no vertex maps are obtained this way.

If $\Theta$ is a right angle, then
$f(v_2)$ is a rectangle and we must have that $n=2p$ is even and
$i+j=p$. Furthermore $f(v_2)$ is also a vertex of $P_n$:
specifically it is the vertex $w_p$ directly opposite $w_0$, as in the
first two pictures in Table~\ref{tab:Hom48}. Thus to fix the image of
such a map, we must pick two opposite pairs of vertices of
$P_n$. Finally, taking into account the eight possible orientations of
$f(P_4)$, the number of these maps is $$8{p\choose 2} = 4p(p-1) = n(n-2) = n^2-2n.$$

Finally, if $\Theta$ is obtuse then $f(v_2)$ is necessarily in the
interior of $P_n$ (one uses the symmetry w.r.t. to the perpendicular line through the midpoint of $[w_0,w_{n-j}]$). In particular, $f$ is a valid map from $P_4$ to
$P_n$. The condition for $\Theta$ to be obtuse is that $i+j <
n/2$. This situation holds in the third and fourth pictures in
Table~\ref{tab:Hom47} (with $n=7$ and respectively $i=j=1$;
$i=1,j=2$) and in the third and fourth pictures in
Table~\ref{tab:Hom48} (with $n=8$ and respectively $i=j=1$; $i=1,j=2$.)

By considerations similar to the above, the number of
ways to obtain such a map is
$$ 8(2p+1) {p \choose 2} = n(n-1)(n-3) = n^3-4n^2+3n$$
if $n=2p+1$, or
$$ 8(2p){p-1 \choose 2} = 2p(2p-2)(2p-4) = n(n-2)(n-4) = n^3-6n^2+8n$$
if $n=2p$.

\medskip In case (2), we may assume that $f(v_0) = w_0$ and $f(v_1) =
w_{i}$ for some $i < n/2$. Then, since $P_n$ is symmetric w.r.t. the perpendicular line through the midpoint of $[w_0,w_i]$, we must actually have that $f(P_4)$ is a
rectangle in order that both $v_2$ and $v_3$ land on edges. Furthermore $n$ must be odd or $f(v_2)$ and
$f(v_3)$ will both be vertices of $P_n$, a situation we already
considered in case (1). Finally, we must have $i+1 < n/2$ in order that the right angles at $f(v_0)$ and
$f(v_1)$ are contained in $P_n$. This situation holds in the first
and second pictures in Table~\ref{tab:Hom47}, with $n=7$ and
respectively $i=1$; $i=2$.

Setting $n=2p+1$, we see that the number of maps of this type is
$$8(2p+1)(p-1) = 4n(n-3) = 4n^2-12n.$$ That all these maps are in fact vertex maps follows from $(\pc_1)$: any affine 1-family must be constant on $v_0$ and $v_1$, but if such a family is not constant on $v_2$ then the images of $v_2$ and $v_3$, when the family parameter varies over $(-1,1)$, must trace out parallel lines - something not possible because these images are confined to \emph{non-parallel} edges of $P_n$.

\medskip In case (3), we may assume $f(v_0) = w_0$ and $f(v_2) = w_i$ for
some $0 < i \leq n/2$. If $i=n/2$, then $n$ is even. Then we
can indeed arrange that $f(v_1)$ and $f(v_3)$ are both on edges, but these
will necessarily be \emph{opposite} (and hence parallel) edges of $P_n$. Without moving
$v_0$ or $v_2$, we can then slide $v_1$ and $v_3$ along these edges in the opposite directions to produce an affine 1-family of maps, contradicting the assumption that $f$ is a vertex map.

On the other hand, if $i<n/2$, and we assume that $f(v_1)$ is on an
edge, let $Q$ be the polygon with vertices $w_0,w_1,\dots,w_i$ and
$Q'$ be the polygon obtained by reflecting $Q$ across the line defined
by $w_0$ and $w_i$. Since $f(v_0)=w_0$, $f(v_2)=w_i$, and $f(w_1)
\in \partial Q$, we conclude that $f(w_3) \in \partial Q'$. But $Q'$
touches the edges of $P$ only at $w_0$ and at $w_i$, so we cannot
arrange that $f(v_3)$ lies on an edge of $Q$. That is, there are no
vertices of $\Hom(P_4,P_m)$ of this type: case (3) is impossible.

Adding up the vertices described by the various cases, we obtain the
total count of rank two vertices of $\Hom(P_4,P_n)$ as claimed in the
proposition.
\end{proof}

\begin{table}
\begin{tabular}{cccc}
\begin{tikzpicture}[scale=1.5]
\draw (1,0) -- (0.6235,0.7818);
\draw (0.6235,0.7818) -- (-0.2225, 0.9749);
\draw (-0.2225, 0.9749) -- (-0.9010, 0.4334);
\draw (-0.9010,0.4334) -- (-0.9010, -0.4334);
\draw (-0.9010, -0.4334) -- (-0.2225,-0.9749);
\draw (-0.2225,-0.9749) -- (0.6235,-0.7818);
\draw (0.6235,-0.7818) -- (1,0);
\draw[lightgray,thick] (-0.9010,0.4334) -- (-0.9010,-0.4334);
\draw[lightgray,thick] (-0.9010,-0.4334) -- (0.7914,-0.4334);
\draw[lightgray,thick] (0.7914,-0.4334) -- (0.7914,0.4334);
\draw[lightgray,thick] (0.7914,0.4334) -- (-0.9010,0.4334);
\end{tikzpicture} &
\hskip0.5cm

\begin{tikzpicture}[scale=1.5]
\draw (1,0) -- (0.6235,0.7818);
\draw (0.6235,0.7818) -- (-0.2225, 0.9749);
\draw (-0.2225, 0.9749) -- (-0.9010, 0.4334);
\draw (-0.9010,0.4334) -- (-0.9010, -0.4334);
\draw (-0.9010, -0.4334) -- (-0.2225,-0.9749);
\draw (-0.2225,-0.9749) -- (0.6235,-0.7818);
\draw (0.6235,-0.7818) -- (1,0);
\draw[lightgray,thick] (0.6235,-0.7818) --  (0.6235,0.7818);
\draw[lightgray,thick] (0.6235,0.7818) -- (-0.4607,0.7818);
\draw[lightgray,thick] (-0.4607,0.7818) -- (-0.4607,-0.7818);
\draw[lightgray,thick] (-0.4607,-0.7818) -- (0.6235,-0.7818);
\end{tikzpicture} & \hskip0.5cm
\begin{tikzpicture}[scale=1.5]
\draw (1,0) -- (0.6235,0.7818);
\draw (0.6235,0.7818) -- (-0.2225, 0.9749);
\draw (-0.2225, 0.9749) -- (-0.9010, 0.4334);
\draw (-0.9010,0.4334) -- (-0.9010, -0.4334);
\draw (-0.9010, -0.4334) -- (-0.2225,-0.9749);
\draw (-0.2225,-0.9749) -- (0.6235,-0.7818);
\draw (0.6235,-0.7818) -- (1,0);
\draw[lightgray,thick] (1,0) -- (0.6235,0.7818);
\draw[lightgray,thick] (0.6235,0.7818) -- (0.2470,0);
\draw[lightgray,thick] (0.2470,0) -- (0.6235,-0.7818);
\draw[lightgray,thick] (0.6235,-0.7818) -- (1,0);
\end{tikzpicture} &
\hskip0.5cm
\begin{tikzpicture}[scale=1.5]
\draw (1,0) -- (0.6235,0.7818);
\draw (0.6235,0.7818) -- (-0.2225, 0.9749);
\draw (-0.2225, 0.9749) -- (-0.9010, 0.4334);
\draw (-0.9010,0.4334) -- (-0.9010, -0.4334);
\draw (-0.9010, -0.4334) -- (-0.2225,-0.9749);
\draw (-0.2225,-0.9749) -- (0.6235,-0.7818);
\draw (0.6235,-0.7818) -- (1,0);
\draw[lightgray,thick] (0.6235,-0.7818) --  (0.6235,0.7818);
\draw[lightgray,thick] (0.6235,0.7818) -- (-0.2225, 0.9749);
\draw[lightgray,thick] (-0.2225, 0.9749) -- (-0.2225,-0.5887);
\draw[lightgray,thick] (-0.2225,-0.5887) -- (0.6235,-0.7818);
\end{tikzpicture}
\end{tabular}
\medskip\caption{Rank two vertices of $\Hom(P_4,P_7)$ up to symmetry}
\label{tab:Hom47}
\end{table}

\begin{table}
\begin{tabular}{cccc}
\begin{tikzpicture}[scale=1.5]
\draw (1,0) -- (0.7071,0.7071);
\draw (0.7071,0.7071) -- (0,1);
\draw (0,1) -- (-0.7071,0.7071);
\draw (-0.7071,0.7071) -- (-1,0);
\draw (-1,0) -- (-0.7071,-0.7071);
\draw (-0.7071,-0.7071) -- (0,-1);
\draw (0,-1) -- (0.7071,-0.7071);
\draw (0.7071,-0.7071) -- (1,0);
\draw[lightgray,thick] (1,0) -- (0,1);
\draw[lightgray,thick] (0,1) -- (-1,0);
\draw[lightgray,thick] (-1,0) -- (0,-1);
\draw[lightgray,thick] (0,-1) -- (1,0);
\end{tikzpicture} &
\hskip0.5cm

\begin{tikzpicture}[scale=1.5]
\draw (1,0) -- (0.7071,0.7071);
\draw (0.7071,0.7071) -- (0,1);
\draw (0,1) -- (-0.7071,0.7071);
\draw (-0.7071,0.7071) -- (-1,0);
\draw (-1,0) -- (-0.7071,-0.7071);
\draw (-0.7071,-0.7071) -- (0,-1);
\draw (0,-1) -- (0.7071,-0.7071);
\draw (0.7071,-0.7071) -- (1,0);
\draw[lightgray,thick] (1,0) -- (0.7071,0.7071);
\draw[lightgray,thick] (0.7071,0.7071) -- (-1,0);
\draw[lightgray,thick] (-1,0) -- (-0.7071,-0.7071);
\draw[lightgray,thick] (-0.7071,-0.7071) -- (1,0);
\end{tikzpicture} &
\hskip0.5cm
\begin{tikzpicture}[scale=1.5]
\draw (1,0) -- (0.7071,0.7071);
\draw (0.7071,0.7071) -- (0,1);
\draw (0,1) -- (-0.7071,0.7071);
\draw (-0.7071,0.7071) -- (-1,0);
\draw (-1,0) -- (-0.7071,-0.7071);
\draw (-0.7071,-0.7071) -- (0,-1);
\draw (0,-1) -- (0.7071,-0.7071);
\draw (0.7071,-0.7071) -- (1,0);
\draw[lightgray,thick] (1,0) -- (0.7071,0.7071);
\draw[lightgray,thick] (0.7071,0.7071) -- (.4142,0);
\draw[lightgray,thick] (.4142,0) -- (0.7071,-0.7071);
\draw[lightgray,thick] (0.7071,-0.7071) -- (1,0);
\end{tikzpicture} &
\hskip0.5cm
\begin{tikzpicture}[scale=1.5]
\draw (1,0) -- (0.7071,0.7071);
\draw (0.7071,0.7071) -- (0,1);
\draw (0,1) -- (-0.7071,0.7071);
\draw (-0.7071,0.7071) -- (-1,0);
\draw (-1,0) -- (-0.7071,-0.7071);
\draw (-0.7071,-0.7071) -- (0,-1);
\draw (0,-1) -- (0.7071,-0.7071);
\draw (0.7071,-0.7071) -- (1,0);
\draw[lightgray,thick] (0.7071,-0.7071) -- (0.7071,0.7071);
\draw[lightgray,thick] (0.7071,0.7071) -- (0,1);
\draw[lightgray,thick] (0,1) -- (0,-0.4142);
\draw[lightgray,thick] (0,-0.4142) -- (0.7071,-0.7071);
\end{tikzpicture}
\end{tabular}
\medskip\caption{Rank two vertices of $\Hom(P_4,P_8)$ up to symmetry}
\label{tab:Hom48}
\end{table}

\subsection{Experimental results}
We end with a table of experimental results for the number of vertices
of $\Hom(P_m,P_n)$ for all $m,n \leq 8$. Our approach to the
computation was as follows. We begin with rational approximations
$Q_m$ and $Q_n$ and compute the polytope $H:=\Hom(Q_m,Q_n)$ with exact
arithmetic, using the
software package \textsf{Polymake}~\cite{GaJoPOLY}. This is possible because of
the explicit facet description of any hom-polytope given by
Proposition~\ref{homproperties}.

\begin{table}\noindent
\begin{minipage}[b]{0.45\linewidth}\centering
\begin{tabular}{||rr|rrrrr|}
m & n & rank 0 & rank 1 & rank 2 & total \\ \hline
3 & 3 & 3 & 18 & 6 & 27 \\
3 & 4 & 4 & 36 & 24 & 64 \\
3 & 5 & 5 & 60 & 60 & 125 \\
3 & 6 & 6 & 90 & 120 & 216 \\
3 & 7 & 7 & 126 & 210 & 343 \\
3 & 8 & 8 & 168 & 336 & 512 \\
4 & 3 & 3 & 12 & 0 & 15 \\
4 & 4 & 4 & 24 & 8 &  36 \\
4 & 5 & 5 & 40 & 80 & 125 \\
4 & 6 & 6 & 60 & 72 & 138 \\
4 & 7 & 7 & 84 & 280 & 371 \\
4 & 8 & 8 & 112 & 240 & 360 \\
5 & 3 & 3 & 30 & 30 & 63 \\
5 & 4 & 4 & 60 & 80 & 144 \\
5 & 5 & 5 & 100 & \emph{60} & \emph{165} \\
5 & 6 & 6 & 150 & \emph{540} & \emph{696} \\
5 & 7 & 7 & 210 & \emph{770} & \emph{987} \\
5 & 8 & 8 & 280 & \emph{1120} & \emph{1408} \\ \hline
\end{tabular}
\end{minipage}
\hspace{0.5cm}
\begin{minipage}[b]{0.45\linewidth}\centering
\begin{tabular}{||rr|rrrrr|}
m & n & rank 0 & rank 1 & rank 2 & total \\ \hline
6 & 3 & 3 & 18 & 12 & 33 \\
6 & 4 & 4 & 36 & 24 & 64 \\
6 & 5 & 5 & 60 & \emph{240} &  \emph{305} \\
6 & 6 & 6 & 90 & \emph{84} & \emph{180} \\
6 & 7 & 7 & 126 & \emph{1008} & \emph{1141} \\
6 & 8 & 8 & 168 & \emph{864} & \emph{1040} \\
7 & 3 & 3 & 42 & 84 & 129 \\
7 & 4 & 4 & 84 & 168 & 256 \\
7 & 5 & 5 & 140 & \emph{770} & \emph{915} \\
7 & 6 & 6 & 210 & \emph{1092} & \emph{1308} \\
7 & 7 & 7 & 294 & \emph{700} & \emph{1001} \\
7 & 8 & 8 & 392 & \emph{2912} & \emph{3312} \\
8 & 3 & 3 & 24 & 48 & 75 \\
8 & 4 & 4 & 48 & 48 & 100 \\
8 & 5 & 5 & 80 & \emph{400} & \emph{485} \\
8 & 6 & 6 & 120 & \emph{288} & \emph{414} \\
8 & 7 & 7 & 168 & \emph{1904} & \emph{2079} \\
8 & 8 & 8 & 224 & \emph{912} & \emph{1144} \\ \hline
\end{tabular}
\end{minipage}
\bigskip\caption{Expected numbers of vertices of $\Hom(P_m,P_n)$}
\label{tab:expected}
\end{table}

However, we do not expect $\Hom(Q_m,Q_n)$ to have the same number of
vertices as $\Hom(P_m,P_n)$. For example, if $m=n \geq 5$, the affine
map that rotates $P_m$ by $\frac{2\pi}{m}$ is a vertex of $\Hom(P_m,P_m)$ at
which $2m$ facets meet; see Example \ref{onmanyfacets}. This map does not exist in
$\Hom(Q_m,Q_m)$. Specifically, the corresponding facets do not all
meet in one point, but various subsets of them do meet to form several
different vertices of $\Hom(Q_m,Q_m)$.
Our problem, then, is to identify the collections of vertices of
$\Hom(Q_m,Q_n)$ that correspond to single vertices of $\Hom(P_m,P_n)$.

Given a polytope $R$ and $\epsilon > 0$, we say that a collection $V$ of
vertices of $R$ is an \emph{$\epsilon$-cluster} if $\|v-w\| <
\epsilon$ for all $v,w \in V$. If all of the vertices of $R$ are
partitioned into a collection of disjoint clusters, we say that
$\epsilon$ successfully partitions $\vertex(R)$.

Note that for any sufficiently large $\epsilon$, we get a single
cluster, and for any sufficiently small $\epsilon$, each vertex forms
a cluster by itself. However, we need intermediate values of
$\epsilon$ that successfully partition $\vertex(\Hom(Q_m,Q_n))$. By
trying several values, we find that for six-digit rational
approximations $Q_m,Q_n$ and for all $m,n \leq 8$, the values
$\epsilon = 10^{-3}$ and $\epsilon = 10^{-4}$ give the same nontrivial
partition. Furthermore, the data resulting from such a partition
agree with what we have proved for the cases $m=3$, $m=4$, $n=3$, and
$n=4$, and also with Proposition~\ref{prop:divisibility}.

Using this partition, we predict the vertex counts shown in the
slanted entries of Table~\ref{tab:expected}; all other values can be obtained from the
theoretical results in this section.



\bibliography{references}
\bibliographystyle{plain}

\end{document}